\documentclass[reqno,12pt]{amsart}
\usepackage{verbatim}
\usepackage{amssymb,amsmath,amsthm}
\usepackage{amsfonts}
\usepackage{array}

\setlength{\textwidth}{6.5in}
\setlength{\oddsidemargin}{.1in}
\setlength{\evensidemargin}{.1in}
\setlength{\topmargin}{-.1in}
\setlength{\textheight}{8.4in}

\def\ds{\displaystyle}

\newtheorem{theorem}{Theorem}[section]
\newtheorem{lemma}{Lemma}[section]
\newtheorem{corollary}{Corollary}[section]

\newtheorem{proposition}{Proposition}

\theoremstyle{remark}
\newtheorem{remark}[theorem]{Remark}

\usepackage{hyperref}

\newcommand{\seqnum}[1]{\href{http://oeis.org/#1}{\underline{#1}}}

\begin{document}

\title[Jacobi polynomials and congruences]{\small Jacobi polynomials and congruences involving some higher-order Catalan numbers and binomial coefficients}

\author{Kh.~Hessami Pilehrood}
\address{The Fields Institute for Research in Mathematical Sciences, 222 College St, Toronto, Ontario M5T 3J1 Canada}
\email{hessamik@gmail.com}

\author{T.~Hessami Pilehrood}
\address{The Fields Institute for Research in Mathematical Sciences, 222 College St, Toronto, Ontario M5T 3J1 Canada}
\email{hessamit@gmail.com}

\subjclass[2010]{11A07,
11A15, 11B37, 11B65, 33C05, 33C45}         
\keywords{Jacobi polynomial, higher-order Catalan number, polynomial congruence, cubic residue, generating function}

\begin{abstract}
 In this paper, we  study  congruences on sums of products of binomial coefficients that can be proved  by using properties of the Jacobi polynomials. We give special attention to polynomial congruences containing Catalan numbers, second-order Catalan numbers, the sequence (\seqnum{A176898})
$S_n=\frac{{6n\choose 3n}{3n\choose 2n}}{2{2n\choose n}(2n+1)},$ and the binomial coefficients ${3n\choose n}$ and  ${4n\choose 2n}$. As an application,
we address several conjectures of Z.\ W.\ Sun on congruences of sums involving $S_n$ and we prove a cubic residuacity  criterion in terms of sums of the binomial coefficients ${3n\choose n}$ conjectured  by Z.\ H.\ Sun.
\end{abstract}

\maketitle

\section{Introduction}

In this paper, building on our previous work with Tauraso \cite{HPT}, we continue to apply properties of the Jacobi polynomials
$P_n^{(\pm 1/2, \mp 1/2)}(x)$ for proving polynomial and numerical congruences containing sums of binomial coefficients.
In particular, we derive polynomial congruences for sums involving binomial coefficients ${3n\choose n},$ ${4n\choose 2n},$
Catalan numbers (\seqnum{A000108})
$$
C_n=\frac{1}{n+1}{2n\choose n}={2n\choose n}-{2n\choose n-1}, \qquad n=0,1,2,\ldots,
$$
second-order Catalan numbers (\seqnum{A001764})
$$
C_n^{(2)}=\frac{1}{2n+1}{3n\choose n}={3n\choose n}-2{3n\choose n-1}, \qquad n=0,1,2,\ldots,
$$
and the sequence (\seqnum{A176898})
\begin{equation} \label{Sn}
S_n=\frac{{6n\choose 3n}{3n\choose n}}{2{2n\choose n}(2n+1)}, \qquad n=0,1,2,\ldots,
\end{equation}
arithmetical properties of which have been studied very recently by  Sun \cite{ZWS13} and Guo \cite{G14}.

Recall that the Jacobi polynomials $P_n^{(\alpha,\beta)}(x)$ are defined by
\begin{equation}
P_n^{(\alpha,\beta)}(x)=\frac{(\alpha+1)_n}{n!}\,F(-n,n+\alpha+\beta+1;\alpha+1;(1-x)/2),
\quad \alpha, \beta>-1,
\label{eq01}
\end{equation}
where
$$
F(a,b;c;z)=\sum_{k=0}^{\infty}\frac{(a)_k(b)_k}{k! (c)_k}z^k,
$$
is the Gauss hypergeometric function and
 $(a)_0=1,$ $(a)_k=a(a+1)\cdots(a+k-1),$ $k\ge 1,$ is the Pochhammer symbol.

The polynomials $P_n^{(\alpha,\beta)}(x)$
 satisfy the three-term recurrence relation \cite[Sect.\ 4.5]{Sz:59}
\begin{equation}
\begin{split}
2(n+1)&(n+\alpha+\beta+1)(2n+\alpha+\beta)P_{n+1}^{(\alpha,\beta)}(x)  \\
&=\left((2n+\alpha+\beta+1)(\alpha^2-\beta^2)+(2n+\alpha+\beta)_3x\right)P_n^{(\alpha,\beta)}(x)   \\
&\qquad-2(n+\alpha)(n+\beta)(2n+\alpha+\beta+2)P_{n-1}^{(\alpha,\beta)}(x)
\label{eq02}
\end{split}
\end{equation}
with the initial conditions $P_0^{(\alpha,\beta)}(x)=1,$ $P_1^{(\alpha,\beta)}(x)=(x(\alpha+\beta+2)+\alpha-\beta)/2$.

While in \cite{HPT} we studied binomial sums arising from the truncation of the series
\begin{equation} \label{eq03}
\arcsin(z)=\sum_{k=0}^{\infty}\frac{{2k\choose k}z^{2k+1}}{4^k(2k+1)}, \qquad\quad |z|\le 1,
\end{equation}
the purpose of the present paper is to consider a quadratic transformation of the Gauss hypergeometric function given by \cite[p.\ 210]{Lu}
\begin{equation} \label{eq04}
\frac{\sin(a\arcsin(z))}{a}=zF\left(\frac{1+a}{2}, \frac{1-a}{2}; \frac{3}{2}; z^2\right), \qquad |z|\le 1,
\end{equation}
which essentially can be regarded as a generalization of series (\ref{eq03}). Note that letting  $a$ approach zero in (\ref{eq04})
yields (\ref{eq03}). On the other side, identity (\ref{eq04})
serves as a source of generating functions for some special sequences of numbers including those mentioned above. Namely, for $a=1/2, 1/3, 2/3$, we have
\begin{align}
\sin\left(\frac{\arcsin(z)}{2}\right)
&=2\sum_{k=0}^{\infty}C_{2k}\left(\frac{z}{4}\right)^{2k+1}, \qquad |z|\le 1, \label{arc2} \\
\sin\left(\frac{\arcsin(z)}{3}\right)
&=\frac{z}{3}\sum_{k=0}^{\infty}C_{k}^{(2)}\left(\frac{4z^2}{27}\right)^{k}, \qquad |z|\le 1, \label{arc3} \\
\sin\left(\frac{2}{3}\arcsin(z)\right)
&=\frac{4z}{3}\sum_{k=0}^{\infty}S_{k}\left(\frac{z^2}{108}\right)^{k}, \qquad |z|\le 1.\label{gfSn}
\end{align}
In this paper, we develop a unified approach for the calculation of polynomial congruences modulo a prime $p$ arising from the truncation of the series (\ref{arc2})--(\ref{gfSn}) and polynomial congruences
involving binomial coefficients ${3k\choose k},$ ${4k\choose 2k}$
and also the sequence $(2k+1)S_k$ within various ranges of summation depending on a prime $p$.

Note that the congruences involving binomial coefficients ${3k\choose k},$ ${4k\choose 2k}$ have been studied extensively from different points of view  \cite{ZHS13, ZHS13_11, ZHS14, ZWS09, ZPS10}.
Z.\ H.\ Sun \cite{ZHS13_11, ZHS14} studied congruences for the sums
$\sum_{k=1}^{\lfloor p/3\rfloor}{3k\choose k}t^k$ and $\sum_{k=1}^{\lfloor p/4\rfloor}{4k\choose 2k}t^k$ using congruences for Lucas sequences and properties of the cubic and quartic residues.
 Sun \cite{ZHS13} also  investigated interesting connections between values of $\sum_{k=1}^{\lfloor p/3\rfloor}{3k\choose k}t^k$  (mod $p$), solubility of cubic congruences, and cubic residuacity criteria.
Zhao, Pan, and Sun \cite{ZPS10} obtained first congruences for the sums $\sum_{k=1}^{p-1}{3k\choose k}t^k$ and $\sum_{k=1}^{p-1}C_k^{(2)}t^k$ at $t=2$ with the help of some combinatorial identity. Later Z.~W.~Sun \cite{ZWS09}  gave explicit congruences for $t=-4, \frac{1}{6},\frac{1}{7}, \frac{1}{8}, \frac{1}{9}, \frac{1}{13}, \frac{3}{8}, \frac{4}{27}$ by applying  properties of third-order recurrences and cubic residues.

Our approach is based on reducing values of the finite  sums discussed above modulo a prime $p$ to values of the Jacobi polynomials $P^{(\pm 1/2, \mp 1/2)}(x),$ which is done in Section \ref{Section2}, and then investigating congruences for the Jacobi polynomials in  subsequent sections.
In Section~\ref{Section3}, we deal with polynomial congruences involving binomial coefficients ${4k\choose 2k}$ and even-indexed Catalan numbers $C_{2k}$.
In Section \ref{Section4}, we study polynomial congruences containing
binomial coefficients~${3k\choose k}$ and second-order Catalan numbers
$C_k^{(2)}$. In Sections \ref{Section5} and \ref{Section6}, we apply the theory of cubic residues developed in \cite{ZHS98} to study congruences for polynomials
 of the form
$$\sum_{k=(p+1)/2}^{\lfloor 2p/3\rfloor}{3k\choose k}t^k, \quad
\sum_{k=1}^{p-1}{3k\choose k}t^k, \quad \sum_{k=1}^{p-1}C_k^{(2)}t^k,
\quad
\sum_{k=0}^{p-1}S_kt^k, \quad\sum_{k=0}^{p-1}(2k+1)S_kt^k.
$$
As a result, we prove several cubic residuacity  criteria in terms of these sums, one of which, in terms of
$\sum_{k=(p+1)/2}^{\lfloor 2p/3\rfloor}{3k\choose k}t^k,$ confirms a question posed by Z.\ H.\ Sun  \cite[Conj.\ 2.1]{ZHS13}.

In Section \ref{Section6}, we derive polynomial congruences for the sums $\sum_{k=0}^{\lfloor p/6\rfloor}S_kt^k,$ $\sum_{k=0}^{p-1}S_kt^k,$
$\sum_{k=0}^{\lfloor p/6\rfloor}(2k+1)S_kt^k,$ $\sum_{k=0}^{p-1}(2k+1)S_kt^k$
and also give many numerical congruences which are new and have not appeared in the literature before. In particular, we show that
$$
\sum_{k=0}^{p-1}\frac{S_k}{108^k}\equiv \frac{1}{2}\left(
\frac{3}{p}\right) \pmod{p}
$$
confirming a conjecture of Z.\ W.\ Sun  \cite[Conj.\ 2]{ZWS13}. Finally, in Section \ref{Section7}, we prove a closed form formula for a companion sequence of $S_n$ answering another question of Sun~\cite[Conj.~4]{ZWS13}.

\section{Main theorem} \label{Section2}

For a non-negative integer $n,$ we consider the sequence $w_n(x)$ defined  \cite[Sect.\ 3]{HPT} by
\begin{equation}
w_n(x):=(2n+1)F(-n,n+1;3/2;(1-x)/2)=\frac{n!}{(1/2)_n} P_n^{(1/2, -1/2)}(x).
\label{eq05}
\end{equation}
  From (\ref{eq02}) it follows that $w_n(x)$
satisfies a second-order linear recurrence with constant coefficients
$$
w_{n+1}(x)=2xw_n(x)-w_{n-1}(x)
$$
and initial conditions $w_0(x)=1,$ $w_1(x)=1+2x$.
This yields the following formulae:
\begin{equation}
w_n(x)=\begin{cases}\ds
\frac{(\alpha+1)\alpha^n-(\alpha^{-1}+1)\alpha^{-n}}{\alpha-\alpha^{-1}},
&  \text{if $x \neq \pm 1$;} \\[3pt]
2n+1, &  \text{if $x=1$;} \\
(-1)^n, &   \text{if $x=-1$,}
\end{cases}
\label{eq06}
\end{equation}
where $\alpha=x+\sqrt{x^2-1}$. Note that for   $x\in (-1,1)$
we also have an
 alternative representation
\begin{equation} \label{eq23.55}
w_n(x)=\cos(n\arccos x)+\frac{x+1}{\sqrt{1-x^2}}\sin(n\arccos x).
\end{equation}
By the well-known symmetry property of the Jacobi polynomials
$$
P_n^{(\alpha, \beta)}(x)=(-1)^nP_n^{(\beta, \alpha)}(-x)
$$
and formula (\ref{eq01}), we get one more expression of $w_n(x)$ in terms of the Gauss hypergeometric function
\begin{equation} \label{eq07}
w_n(x)=(-1)^nF(-n, n+1; 1/2; (1+x)/2).
\end{equation}

For a given prime $p,$  let $D_p$ denote the set of those rational numbers whose denominator is not divisible by $p$.    Let $\varphi(m)$ be the Euler totient function and let $(\frac{a}{p})$ be the Legendre symbol. We put $(\frac{a}{p})=0$ if $p|a$.
For $c=a/b\in D_p$ written in its lowest terms, we define
$(\frac{c}{p})=(\frac{ab}{p})$ in view that the congruences
$x^2\equiv c$ (mod $p$) and $(bx)^2\equiv ab$ (mod $p$) are equivalent.
It is clear that $(\frac{c}{p})$ has all the formal properties of the ordinary Legendre symbol.
For any rational number $x,$   let $v_p(x)$ denote the $p$-adic order of $x$.
\begin{theorem} \label{TM}
Let $m$ be a positive integer with $\varphi(m)=2$, i.e., $m\in\{3, 4, 6\}$,
and let $p$ be a prime greater than $3$. Then for any $t\in D_p$,  we have
\begin{align}
\sum_{k=0}^{\lfloor p/m\rfloor}\frac{\left(\frac{1}{m}\right)_k\left(\frac{m-1}{m}\right)_k}{(2k+1)!}\,t^k&\equiv \frac{1}{1+2\lfloor p/m\rfloor}\,w_{\lfloor\frac{p}{m}\rfloor}(1-t/2) \pmod{p}, \label{eq08}\\
\sum_{k=0}^{\lfloor p/m\rfloor}\frac{\left(\frac{1}{m}\right)_k\left(\frac{m-1}{m}\right)_k}{(2k)!}\, t^k&\equiv (-1)^{\lfloor p/m\rfloor}\, w_{\lfloor\frac{p}{m}\rfloor}(t/2-1) \pmod{p}, \label{eq10}\\
\sum_{k=(p-1)/2}^{\lfloor(m-1)p/m\rfloor}\!\frac{\left(\frac{1}{m}\right)_k\left(\frac{m-1}{m}\right)_k}{(2k+1)!}\,t^k&\equiv \frac{-1}{m(1+2\lfloor p/m\rfloor)}\Bigl(w_{\lfloor\frac{(m-1)p}{m}\rfloor}(1-t/2)\!+w_{\lfloor\frac{p}{m}\rfloor}(1-t/2)\Bigr) \!\pmod{p}, \label{eq09}\\
\sum_{k=(p+1)/2}^{\lfloor(m-1)p/m\rfloor}\frac{\left(\frac{1}{m}\right)_k\left(\frac{m-1}{m}\right)_k}{(2k)!}\, t^k&\equiv \frac{(-1)^{\lfloor p/m\rfloor}}{m}\Bigl(w_{\lfloor\frac{(m-1)p}{m}\rfloor}(t/2-1)-w_{\lfloor\frac{p}{m}\rfloor}(t/2-1)\Bigr) \pmod{p}. \nonumber
\end{align}
\end{theorem}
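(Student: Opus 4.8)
The plan is to rewrite every left-hand side as a truncated Gauss hypergeometric series and to match it against the hypergeometric representation of $w_n$. Using the duplication identities $(2k)!=4^kk!\,(1/2)_k$ and $(2k+1)!=4^kk!\,(3/2)_k$, the summand carrying $(2k+1)!$ becomes $\frac{(1/m)_k((m-1)/m)_k}{k!\,(3/2)_k}(t/4)^k$, so the full series is $F(1/m,(m-1)/m;3/2;t/4)$; likewise the $(2k)!$-summand produces $F(1/m,(m-1)/m;1/2;t/4)$. On the other hand, setting $x=1-t/2$ in (\ref{eq05}) gives $\frac{1}{2n+1}w_n(1-t/2)=F(-n,n+1;3/2;t/4)$, and setting $x=t/2-1$ in (\ref{eq07}) gives $(-1)^nw_n(t/2-1)=F(-n,n+1;1/2;t/4)$, where $n=\lfloor p/m\rfloor$. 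Thus the whole theorem reduces to replacing the parameters $1/m,(m-1)/m$ by $-n,n+1$ inside truncated sums, modulo $p$.

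The heart of the first two congruences is a termwise comparison. Writing $p=mn+r$, the hypothesis $\varphi(m)=2$ forces $m\in\{3,4,6\}$ and, since $\gcd(p,m)=1$, forces $r\in\{1,m-1\}$, which are exactly the two residues coprime to $m$. Reducing modulo $p$ one finds $-n\equiv r/m$ and $n+1\equiv(m-r)/m$, and because $\{r,m-r\}=\{1,m-1\}$ as a set, the symmetric product satisfies $(-n)_k(n+1)_k\equiv(1/m)_k((m-1)/m)_k\pmod p$ (a Pochhammer symbol is a monic polynomial in its argument, so congruent arguments give congruent values). For $0\le k\le n$ every term is $p$-integral, since $2k+1\le 2n+1<p$; summing the termwise congruence then yields (\ref{eq08}) and (\ref{eq10}) at once.

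For the tail sums I would start from the exact polynomial identities $\frac{1}{2N+1}w_N(1-t/2)=\sum_{k=0}^{N}\frac{(-N)_k(N+1)_k}{(2k+1)!}t^k$ and its $(2k)!$-analogue, where $N:=\lfloor(m-1)p/m\rfloor=p-n-1$, and split the range $[0,N]$ into a head $[0,n]$, a middle block, and the tail beginning at $(p-1)/2$ (resp.\ $(p+1)/2$). The key is a valuation count: the numerator factor $nm+r=p$ occurring at index $n$ means that $(-N)_k(N+1)_k$ (equivalently one of the two Pochhammer products) acquires exactly one factor $p$ as soon as $k\ge n+1$, while $v_p((2k+1)!)=0$ on the middle block and $=1$ on the tail (where $p\le 2k+1<2p$). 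Consequently the middle block vanishes modulo $p$, the head reproduces $\frac{1}{2n+1}w_n(1-t/2)$ through (\ref{eq08}), and the tail terms are $p$-integral.

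The delicate point, and the step I expect to be the main obstacle, is that on the tail both $(1/m)_k((m-1)/m)_k$ and $(-N)_k(N+1)_k$ carry the single factor $p$, so the mod-$p$ termwise congruence above is too weak; I must instead compute the ratio of their $p$-free parts modulo $p$. Cancelling the one factor $p$ from each and reducing, all remaining factors match termwise except that the integer factor $p$ inside $(N+1)_k$ is replaced by $(nm+r)/m=p/m$ inside the Pochhammer product, leaving behind precisely the constant $1/m$; hence $\frac{(1/m)_k((m-1)/m)_k}{(2k+1)!}\equiv\frac1m\frac{(-N)_k(N+1)_k}{(2k+1)!}\pmod p$ on the tail. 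Feeding this back, together with $2N+1\equiv-(2n+1)\pmod p$ and $(-1)^N=(-1)^n$ (as $N+n=p-1$ is even), converts the head/tail decomposition into $\frac{-1}{m(2n+1)}\bigl(w_N(1-t/2)+w_n(1-t/2)\bigr)$ for (\ref{eq09}) and into $\frac{(-1)^n}{m}\bigl(w_N(t/2-1)-w_n(t/2-1)\bigr)$ for the final congruence, the opposite signs arising from $2N+1\equiv-(2n+1)$ versus $(-1)^N=(-1)^n$. Pinning down this constant $1/m$ and the signs cleanly across both residue cases $r=1$ and $r=m-1$ is where the real bookkeeping lies.
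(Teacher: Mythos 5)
Your proposal is correct and takes essentially the same route as the paper's own proof: both arguments reduce the Pochhammer parameters $-n,\,n+1$ (resp.\ $-N,\,N+1$ with $N=\lfloor (m-1)p/m\rfloor$) modulo $p$ to $\frac1m,\frac{m-1}{m}$, use the identical $p$-adic valuation split into a head, a vanishing middle block, and a tail where numerator and denominator each carry exactly one factor of $p$, and extract the same constant $m$ on the tail before assembling via $2N+1\equiv-(2n+1)\pmod p$ and $(-1)^N=(-1)^n$. The only difference is presentational --- the paper expands $w_N$ forward into reduced-parameter sums (introducing a second residue $r$ for $(m-1)p$), whereas you work backward from the sums to the exact expansion using $N=p-1-n$ --- but the key congruence $\frac{(1/m)_k((m-1)/m)_k}{(2k+1)!}\equiv\frac1m\cdot\frac{(-N)_k(N+1)_k}{(2k+1)!}\pmod p$ on the tail is exactly the paper's displayed congruence for $\bigl(\tfrac{(m-1)p+m-r}{m}\bigr)_k/\bigl(\tfrac32\bigr)_k$.
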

\begin{proof}
 Let $m\in\{3, 4, 6\}$, i.e., $\varphi(m)=2$.
Suppose $p$ is an odd prime greater than $3$ and $p\equiv r$ (mod $m$), where $r\in\{1, m-1\}$. We put $n=\frac{p-r}{m}$. Then $p=mn+r$ and
 from (\ref{eq05}) we have
$$
w_n(x)=(2n+1)F\Bigl(-n, n+1; \frac{3}{2}; \frac{1-x}{2}\Bigr)=\frac{2p-2r+m}{m}\sum_{k=0}^n\frac{\left(\frac{r-p}{m}\right)_k\left(\frac{m-r+p}{m}\right)_k}{\left(\frac{3}{2}\right)_k k!}\left(\frac{1-x}{2}\right)^k.
$$
Since $(\frac{3}{2})_k=\frac{3\cdot 5\cdot\ldots\cdot(2k+1)}{2^k}$ and $2k+1\le 2n+1<p$, the denominators of the summands are coprime to $p$ and we have
$$
w_{n}(x)\!\equiv \frac{m-2r}{m}\sum_{k=0}^{\lfloor p/m\rfloor}\!\frac{\left(\frac{r}{m}\right)_k\left(\frac{m-r}{m}\right)_k}{\left(\frac{3}{2}\right)_k k!}\left(\frac{1-x}{2}\right)^k\!\!=\frac{m-2r}{m}\sum_{k=0}^{\lfloor p/m\rfloor}\!\frac{\left(\frac{1}{m}\right)_k\left(\frac{m-1}{m}\right)_k}{\left(\frac{3}{2}\right)_k k!}\left(\frac{1-x}{2}\right)^k \!\!\!\!\!\!\! \pmod{p}
$$
or
$$
w_n(x)\equiv \frac{m-2r}{m}\sum_{k=0}^{\lfloor p/m\rfloor}\frac{\left(\frac{1}{m}\right)_k\left(\frac{m-1}{m}\right)_k}{(2k+1)!}\,\left(2(1-x)\right)^k \pmod{p}.
$$
Replacing $x$ by $1-t/2$,  we get (\ref{eq08}).

Applying formula (\ref{eq07}) to $w_n(x)$, similarly as before, we get
$$
w_n(x)=(-1)^nF(-n, n+1; 1/2; (1+x)/2)=(-1)^n\sum_{k=0}^n\frac{\left(\frac{r-p}{m}\right)_k\left(\frac{p-r+m}{m}\right)_k}{\left(\frac{1}{2}\right)_k k!}\,\left(\frac{1+x}{2}\right)^k
$$
or
$$
w_n(x)\equiv (-1)^n\!\sum_{k=0}^{n}\frac{\left(\frac{1}{m}\right)_k\left(\frac{m-1}{m}\right)_k}{\left(\frac{1}{2}\right)_k k!}\left(\frac{1+x}{2}\right)^k=
(-1)^n\sum_{k=0}^n\frac{\left(\frac{1}{m}\right)_k\left(\frac{m-1}{m}\right)_k}{(2k)!}\left(2(1+x)\right)^k \pmod{p}.
$$
Substituting $t=2(1+x),$ we obtain (\ref{eq10}).

To prove the other two congruences, we consider $(m-1)p$ modulo $m$. It is clear that  $(m-1)p\equiv r$ (mod $m$), where $r\in\{1, m-1\}$. We put $n=\frac{(m-1)p-r}{m}$.  Then
$(m-1)p=mn+r$ and from (\ref{eq05}) we have
\begin{equation} \label{eq12}
w_n(x)=\frac{2(m-1)p-2r+m}{m}\sum_{k=0}^n\frac{\left(\frac{r-(m-1)p}{m}\right)_k\left(\frac{(m-1)p+m-r}{m}\right)_k}{\left(\frac{3}{2}\right)_k k!}\,
\left(\frac{1-x}{2}\right)^k.
\end{equation}
Note that $p$ divides $(\frac{3}{2})_k$ if and only if $k\ge (p-1)/2$.  Moreover, $p^2$ does not divide $(\frac{3}{2})_k$ for any $k$ from the range of summation. Similarly, we have
$$
\left(\frac{r-(m-1)p}{m}\right)_k=\prod_{l=0}^{k-1}\frac{r+ml-(m-1)p}{m}.
$$
All possible multiples of $p$ among the numbers $r+ml,$ where $0\le l\le k-1\le \frac{(m-1)p-r-m}{m}$, could be only of the form $r+ml=jp$ with $1\le j\le m-2$. This implies that $jp\equiv r\equiv -p$ (mod $m$) or $(j+1)p\equiv 0$ (mod $m$), which is impossible, since $\gcd(p,m)=1$ and $j+1<m$.
 So $p$ does not divide $(\frac{r-(m-1)p}{m})_k$.
Considering
$$
\left(\frac{(m-1)p+m-r}{m}\right)_k=\prod_{l=1}^{k}\frac{(m-1)p+ml-r}{m},
$$
we see that $p$ divides $(\frac{(m-1)p+m-r}{m})_k$ if and only if $k\ge \frac{p+r}{m}$. Moreover, $p^2$ does not divide $\bigl(\frac{(m-1)p+m-r}{4}\bigr)_k$ for any $k$
from the range of summation. Indeed, if we had $ml-r=jp$ for some $1<j\le m-1$, then $p\equiv -r\equiv jp$ (mod $m$) and therefore $p(j-1)\equiv 0$ (mod $m$),
which is impossible.
From the divisibility properties of the Pochhammer's symbols above and
(\ref{eq12}) we easily conclude that
\begin{equation*}
w_{\lfloor\frac{(m-1)p}{m}\rfloor}(x)\equiv\frac{m-2r}{m}\left(\sum_{k=0}^{\lfloor p/m\rfloor}+\!\!\sum_{k=(p-1)/2}^{\lfloor(m-1)p/m\rfloor}\right)\!
\frac{\left(\frac{r-(m-1)p}{m}\right)_k\left(\frac{(m-1)p+m-r}{m}\right)_k}{\left(\frac{3}{2}\right)_k k!}\left(\frac{1-x}{2}\right)^k \!\pmod{p}
\end{equation*}
and therefore,
\begin{equation*}
w_{\lfloor\frac{(m-1)p}{m}\rfloor}(x)\equiv \frac{m-2r}{m}\left(\sum_{k=0}^{\lfloor p/m\rfloor}+\,m\!\sum_{k=(p-1)/2}^{\lfloor(m-1)p/m\rfloor}\right)
\frac{\left(\frac{r}{m}\right)_k\left(\frac{m-r}{m}\right)_k}{\left(\frac{3}{2}\right)_k k!}\left(\frac{1-x}{2}\right)^k \pmod{p},
\end{equation*}
where for the second sum, we employed the congruence
\begin{equation*}
\begin{split}
\frac{\left(\frac{(m-1)p+m-r}{m}\right)_k}{\left(\frac{3}{2}\right)_k}&=
\frac{\frac{(m-1)p+m-r}{m}\cdot\frac{(m-1)p+2m-r}{m}\cdots\frac{(m-1)p+m\cdot\frac{p+r}{m}-r}{m}\cdots\frac{(m-1)p+mk-r}{m}}{\frac{3}{2}\cdot\frac{5}{2}\cdots
\frac{p}{2}\cdots\frac{2k+1}{2}} \\
&\equiv m\frac{\left(\frac{m-r}{m}\right)_k}{\left(\frac{3}{2}\right)_k} \pmod{p}
\end{split}
\end{equation*}
valid for $(p-1)/2\le k\le \lfloor(m-1)p/m\rfloor$.
Now by (\ref{eq08}), we obtain
$$
\frac{m}{m-2r}\,w_{\lfloor\frac{(m-1)p}{m}\rfloor}(x)\equiv \frac{1}{1+2\lfloor p/m\rfloor}\,w_{\lfloor\frac{p}{m}\rfloor}(x)+
m\!\!\!\!\!\sum_{k=(p-1)/2}^{\lfloor(m-1)p/m\rfloor}
\frac{\left(\frac{1}{m}\right)_k\left(\frac{m-1}{m}\right)_k}{(2k+1)!}\left(2(1-x)\right)^k \pmod{p}.
$$
Taking into account that
$
\left\lfloor\frac{(m-1)p}{m}\right\rfloor=p-1-\left\lfloor\frac{p}{m}\right\rfloor
$
and replacing $x$ by $1-t/2$, we get the desired congruence (\ref{eq09}).

Finally, applying formula (\ref{eq07}) and following the same line of arguments as for proving (\ref{eq09}), we have
\begin{equation*}
\begin{split}
w_{\lfloor\frac{(m-1)p}{m}\rfloor}(x)&=(-1)^{n} F(-n, n+1; 1/2; (1+x)/2) \\ &=(-1)^{\lfloor\frac{(m-1)p}{m}\rfloor}\sum_{k=0}^{\lfloor(m-1)p/m\rfloor}
\frac{\left(\frac{r-(m-1)p}{m}\right)_k\left(\frac{(m-1)p+m-r}{m}\right)_k}{\left(\frac{1}{2}\right)_k k!}\left(\frac{1+x}{2}\right)^k \\
&\equiv (-1)^{\lfloor\frac{(m-1)p}{m}\rfloor}\left(\sum_{k=0}^{\lfloor p/m\rfloor}+\sum_{k=(p+1)/2}^{\lfloor(m-1)p/m\rfloor}\right)
\frac{\left(\frac{r-(m-1)p}{m}\right)_k\left(\frac{(m-1)p+m-r}{m}\right)_k}{\left(\frac{1}{2}\right)_k k!}\left(\frac{1+x}{2}\right)^k \\
&\equiv (-1)^{\lfloor\frac{(m-1)p}{m}\rfloor}\left(\sum_{k=0}^{\lfloor p/m\rfloor}+\,m\!\sum_{k=(p+1)/2}^{\lfloor(m-1)p/m\rfloor}\right)
\frac{\left(\frac{1}{m}\right)_k\left(\frac{(m-1)}{m}\right)_k}{(2k)!}\left(2(1+x)\right)^k \pmod{p}.
\end{split}
\end{equation*}
Now by (\ref{eq09}), we obtain
$$
(-1)^{\lfloor\frac{(m-1)p}{m}\rfloor}w_{\lfloor\frac{(m-1)p}{m}\rfloor}(x)\equiv (-1)^{\lfloor\frac{p}{m}\rfloor}w_{\lfloor\frac{p}{m}\rfloor}(x)
+m\!\!\!\!\!\sum_{k=(p+1)/2}^{\lfloor(m-1)p/m\rfloor}\frac{\left(\frac{1}{m}\right)_k\left(\frac{(m-1)}{m}\right)_k}{(2k)!}\left(2(1+x)\right)^k
  \!\!\pmod{p}
$$
and after the substitution $x=t/2-1$,  we derive the last congruence of the theorem.
\end{proof}
\begin{corollary} \label{CM}
Let $m$ be a positive integer with $\varphi(m)=2,$ i.e., $m\in\{3, 4, 6\}$,
and let $p$ be a prime greater than $3$. Then for any $t\in D_p$, we have
\begin{align*}
\sum_{k=0}^{p-1}\!\frac{\left(\frac{1}{m}\right)_k\left(\frac{m-1}{m}\right)_k}{(2k+1)!}\,t^k&\!\equiv\! \frac{1}{m(1+2\lfloor p/m\rfloor)}\Bigl((m-1)w_{\lfloor\frac{p}{m}\rfloor}(1-t/2)\!-\!
w_{\lfloor\frac{(m-1)p}{m}\rfloor}(1-t/2)\Bigr) \!\!\!\pmod{p}, \\
\sum_{k=0}^{p-1}\frac{\left(\frac{1}{m}\right)_k\left(\frac{m-1}{m}\right)_k}{(2k)!}\, t^k&\equiv \frac{(-1)^{\lfloor p/m\rfloor}}{m}\Bigl((m-1)w_{\lfloor\frac{p}{m}\rfloor}(t/2-1)+
w_{\lfloor\frac{(m-1)p}{m}\rfloor}(t/2-1)\Bigr) \pmod{p}.
\end{align*}
\end{corollary}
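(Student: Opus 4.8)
The plan is to assemble the full sum $\sum_{k=0}^{p-1}$ out of the two partial sums already evaluated in Theorem~\ref{TM}, by showing that the summands lying outside those two ranges contribute nothing modulo $p$. Write $p\equiv r\pmod m$ with $r\in\{1,m-1\}$, so that $\{r,m-r\}=\{1,m-1\}$ and hence $(\tfrac1m)_k(\tfrac{m-1}m)_k=(\tfrac{r}m)_k(\tfrac{m-r}m)_k$. Expanding $(\tfrac{r}m)_k=m^{-k}\prod_{l=0}^{k-1}(r+ml)$ and locating the first factor $r+ml$ that is a multiple of $p$ (using $\gcd(m,p)=1$ and the relation $\lfloor(m-1)p/m\rfloor=p-1-\lfloor p/m\rfloor$ already noted in the proof of the theorem), one checks that $p\mid(\tfrac{r}m)_k$ precisely for $k\ge\lfloor p/m\rfloor+1$, while $p\mid(\tfrac{m-r}m)_k$ precisely for $k\ge\lfloor(m-1)p/m\rfloor+1$. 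Consequently the product $(\tfrac1m)_k(\tfrac{m-1}m)_k$ has $p$-adic valuation at least $1$ once $k>\lfloor p/m\rfloor$ and at least $2$ once $k>\lfloor(m-1)p/m\rfloor$.

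For the first congruence I would split
\[
\sum_{k=0}^{p-1}=\sum_{k=0}^{\lfloor p/m\rfloor}+\sum_{k=\lfloor p/m\rfloor+1}^{(p-3)/2}+\sum_{k=(p-1)/2}^{\lfloor(m-1)p/m\rfloor}+\sum_{k=\lfloor(m-1)p/m\rfloor+1}^{p-1},
\]
and kill the second and fourth blocks. In the second block $2k+1\le p-2$, so $v_p((2k+1)!)=0$ while the numerator already carries one factor of $p$; in the fourth block $p\le 2k+1\le 2p-1$, so $v_p((2k+1)!)=1$ but the numerator now carries two factors of $p$. Thus $\sum_{k=0}^{p-1}$ reduces modulo $p$ to its first and third blocks, which are exactly the left-hand sides of \eqref{eq08} and \eqref{eq09}. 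Substituting those congruences and collecting the $w$-values over the common denominator $m(1+2\lfloor p/m\rfloor)$ yields the first identity.

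The second congruence is identical in spirit, the only change being that $(2k)!$ first acquires a factor of $p$ at $k=(p+1)/2$ rather than at $k=(p-1)/2$. I would therefore split at $(p-1)/2$ and $(p+1)/2$: the block $\lfloor p/m\rfloor+1\le k\le(p-1)/2$ drops out because there $v_p((2k)!)=0$ against one factor of $p$ in the numerator, and the tail $k\ge\lfloor(m-1)p/m\rfloor+1$ drops out because there $v_p((2k)!)=1$ against two. What survives are $\sum_{k=0}^{\lfloor p/m\rfloor}$ and $\sum_{k=(p+1)/2}^{\lfloor(m-1)p/m\rfloor}$, namely the left-hand sides of \eqref{eq10} and of the last congruence of Theorem~\ref{TM}; substituting and simplifying produces the second identity.

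The one delicate step, and the main obstacle, is the valuation bookkeeping of the first paragraph: pinning down the exact thresholds at which each Pochhammer symbol first becomes divisible by $p$, and verifying along the way that every summand in the two surviving ranges is a $p$-integer, so that each congruence is genuinely a statement modulo $p$. Determining where $r+ml$ first hits a multiple of $p$ requires inspecting the residue of $p$ modulo $m$, so both cases $r=1$ and $r=m-1$ must be examined; the resulting thresholds are symmetric, and the computation is the same one already performed inside the proof of Theorem~\ref{TM}, so no new idea is needed. Once the vanishing of the gap and tail blocks is in hand, both identities follow from the routine linear combinations of \eqref{eq08}--\eqref{eq09} and of \eqref{eq10} with the final congruence of the theorem.
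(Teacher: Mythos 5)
Your proposal is correct and follows essentially the same route as the paper's own proof: the paper likewise reduces $\sum_{k=0}^{p-1}$ modulo $p$ to the two ranges $[0,\lfloor p/m\rfloor]$ and $[(p\mp1)/2,\lfloor (m-1)p/m\rfloor]$ by exactly this $p$-adic valuation bookkeeping on $\bigl(\tfrac{r}{m}\bigr)_k$, $\bigl(\tfrac{m-r}{m}\bigr)_k$, $(2k+1)!$ and $(2k)!$, and then substitutes the four congruences of Theorem~\ref{TM} and collects terms. Your threshold claims (divisibility of the numerator starting at $k=\lfloor p/m\rfloor+1$ and double divisibility at $k=\lfloor(m-1)p/m\rfloor+1$) are the same facts the paper verifies, so there is no gap.
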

\begin{proof}
Let $p=mn+r$, where $r\in\{1, m-1\}$. If
$n+1=\lfloor\frac{p}{m}\rfloor+1\le k\le \frac{p-3}{2}$, then $v_p((2k+1)!)=0$
and $v_p\bigl(\bigl(\frac{r}{m}\bigr)_k\bigr)\ge 1$, since the product
$\prod_{l=0}^{k-1}(r+lm)$ is divisible by $p$.

If $(m-1)n+r=\bigl\lfloor\frac{(m-1)p}{m}\bigr\rfloor+1\le k\le p-1$, then it is easy to see that $v_p((2k+1)!)=v_p((2k)!)=1$,
$v_p\bigl(\bigl(\frac{r}{m}\bigr)_k\bigr)\ge 1$,
and the product $\prod_{l=1}^{k}(lm-r)$ contains the factor $(m-1)p$.
This implies that $v_p\bigl(\bigl(\frac{m-r}{m}\bigr)_k\bigr)\ge 1$,
and therefore we have
\begin{align*}
\sum_{k=0}^{p-1}\frac{\left(\frac{1}{m}\right)_k\left(\frac{m-1}{m}\right)_k}{(2k+1)!}\,t^k&\equiv
\sum_{k=0}^{\lfloor p/m\rfloor}\frac{\left(\frac{1}{m}\right)_k\left(\frac{m-1}{m}\right)_k}{(2k+1)!}\,t^k+
\sum_{k=(p-1)/2}^{\lfloor(m-1)p/m\rfloor}\frac{\left(\frac{1}{m}\right)_k\left(\frac{m-1}{m}\right)_k}{(2k+1)!}\,t^k \pmod{p}, \\
\sum_{k=0}^{p-1}\frac{\left(\frac{1}{m}\right)_k\left(\frac{m-1}{m}\right)_k}{(2k)!}\, t^k&\equiv
 \sum_{k=0}^{\lfloor p/m\rfloor}\frac{\left(\frac{1}{m}\right)_k\left(\frac{m-1}{m}\right)_k}{(2k)!}\,t^k+
 \sum_{k=(p+1)/2}^{\lfloor(m-1)p/m\rfloor}\frac{\left(\frac{1}{m}\right)_k\left(\frac{m-1}{m}\right)_k}{(2k)!}\,t^k \pmod{p}.
\end{align*}
Finally, applying Theorem \ref{TM}, we conclude the proof of the corollary.
\end{proof}

\section{Polynomial congruences involving  Catalan numbers}
\label{Section3}

In this section, we consider applications of Theorem \ref{TM} when $m=4$. In this case, we get polynomial congruences involving even-indexed Catalan numbers $C_{2n}$ (sequence \seqnum{A048990} in the OEIS \cite{OEIS}) and binomial
coefficients~${4n\choose 2n}$  (sequence \seqnum{A001448}).
\begin{theorem} \label{T1}
Let $p$ be an odd prime and let $t\in D_p$. Then
\begin{align*}
\sum_{k=0}^{\lfloor p/4\rfloor}C_{2k}t^k&\equiv 2(-1)^{\frac{p-1}{2}}w_{\lfloor\frac{p}{4}\rfloor}(1-32t) \pmod{p}, \\
\sum_{k=(p-1)/2}^{\lfloor3p/4\rfloor}C_{2k}t^k&\equiv \frac{(-1)^{\frac{p+1}{2}}}{2}\Bigl(w_{\lfloor\frac{3p}{4}\rfloor}(1-32t)+w_{\lfloor\frac{p}{4}\rfloor}(1-32t)\Bigr) \pmod{p}, \\
\sum_{k=0}^{\lfloor p/4\rfloor}{4k\choose 2k} t^k&\equiv \left(\frac{-2}{p}\right) w_{\lfloor\frac{p}{4}\rfloor}(32t-1) \pmod{p}, \\
\sum_{k=(p+1)/2}^{\lfloor3p/4\rfloor}{4k\choose 2k} t^k&\equiv \frac{1}{4}\left(\frac{-2}{p}\right)\Bigl(w_{\lfloor\frac{3p}{4}\rfloor}(32t-1)-w_{\lfloor\frac{p}{4}\rfloor}(32t-1)\Bigr) \pmod{p}.
\end{align*}
\end{theorem}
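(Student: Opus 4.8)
The plan is to obtain all four congruences of Theorem~\ref{T1} by specializing Theorem~\ref{TM} to $m=4$, after rewriting the hypergeometric summands $\frac{(1/4)_k(3/4)_k}{(2k+1)!}$ and $\frac{(1/4)_k(3/4)_k}{(2k)!}$ in terms of $C_{2k}$ and ${4k\choose 2k}$. Since Theorem~\ref{TM} requires $p>3$, I would first dispose of the single prime $p=3$ by direct inspection (there $\lfloor p/4\rfloor=0$, and every sum collapses to its $k=0$ term, matching the right-hand sides), and assume $p>3$ from here on; note that $64$ is a unit modulo such $p$, so the substitution $t\mapsto 64t$ below preserves membership in $D_p$.

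The first step is to establish the two elementary Pochhammer identities. Expanding the shifted factorials as products of odd integers gives $(1/4)_k(3/4)_k=\frac{1\cdot3\cdot5\cdots(4k-1)}{16^k}=\frac{(4k)!}{16^k\,2^{2k}(2k)!}=\frac{(4k)!}{64^k\,(2k)!}$, and hence
$$\frac{\left(\frac14\right)_k\left(\frac34\right)_k}{(2k)!}=\frac{1}{64^k}{4k\choose 2k},\qquad \frac{\left(\frac14\right)_k\left(\frac34\right)_k}{(2k+1)!}=\frac{1}{64^k}\cdot\frac{1}{2k+1}{4k\choose 2k}=\frac{C_{2k}}{64^k}.$$
Consequently, replacing $t$ by $64t$ in congruences (\ref{eq08}) and (\ref{eq09}) converts their left-hand sides into $\sum C_{2k}t^k$ over the respective ranges, while replacing $t$ by $64t$ in (\ref{eq10}) and in the last congruence of Theorem~\ref{TM} produces $\sum {4k\choose 2k}t^k$; simultaneously the arguments $1-t/2$ and $t/2-1$ of the Jacobi data become $1-32t$ and $32t-1$.

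It remains to simplify the scalar prefactors, which is where the only genuine bookkeeping lies. Writing $p=4\lfloor p/4\rfloor+r$ with $r\in\{1,3\}$ one finds $1+2\lfloor p/4\rfloor\equiv\frac{2-r}{2}\equiv\frac{(-1)^{(p-1)/2}}{2}\pmod p$, so that $\frac{1}{1+2\lfloor p/4\rfloor}\equiv 2(-1)^{(p-1)/2}$; this yields the first congruence from (\ref{eq08}), and feeding the same reciprocal into the prefactor $\frac{-1}{4(1+2\lfloor p/4\rfloor)}$ of (\ref{eq09}) gives $\frac{(-1)^{(p+1)/2}}{2}$, which is the second congruence. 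For the binomial sums a short case analysis modulo $8$ shows $(-1)^{\lfloor p/4\rfloor}=\left(\frac{-2}{p}\right)$ (both equal $1$ for $p\equiv 1,3$ and $-1$ for $p\equiv 5,7\pmod 8$); substituting this into (\ref{eq10}) and into the last congruence of Theorem~\ref{TM} delivers the third and fourth congruences, the latter carrying the factor $\frac14\left(\frac{-2}{p}\right)$. The main obstacle is thus not conceptual but lies in matching the prefactors consistently across the residue classes of $p$; everything else is a direct substitution into Theorem~\ref{TM}.
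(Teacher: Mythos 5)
Your proposal is correct and follows essentially the same route as the paper: specialize Theorem~\ref{TM} to $m=4$, use the identity $(1/4)_k(3/4)_k=(4k)!/(64^k(2k)!)$ to turn the hypergeometric summands into $C_{2k}/64^k$ and ${4k\choose 2k}/64^k$, substitute $t\mapsto 64t$, and simplify the prefactors via $\frac{1}{1+2\lfloor p/4\rfloor}\equiv 2(-1)^{(p-1)/2}\pmod p$ and $(-1)^{\lfloor p/4\rfloor}=\left(\frac{-2}{p}\right)$ — exactly the computations the paper records, which you carry out in more detail. One small inaccuracy: in your $p=3$ check, the second and fourth sums do \emph{not} collapse to their $k=0$ terms (they run over $k=1,2$ and $k=2$ respectively, since $(p-1)/2=1$, $(p+1)/2=2$, $\lfloor 3p/4\rfloor=2$), so verifying them requires computing $w_2(x)=4x^2+2x-1$ at the relevant arguments; the congruences do hold there, so the case is still disposed of by direct inspection, but not for the reason you state. (The paper itself silently ignores $p=3$, so including this case — with the corrected computation — is a genuine, if tiny, improvement.)
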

\begin{proof}
We put $m=4$ in Theorem \ref{TM}. Then for any odd prime $p$, we have $p=4l+r$, where $l$ is non-negative integer and $r\in\{1,3\}$.
Hence,
$$
\frac{1}{1+2\lfloor p/4\rfloor}=\frac{1}{1+2l}=\frac{2}{p+2-r}\equiv \frac{2}{2-r}=2(-1)^{(p-1)/2} \pmod{p}.
$$
Moreover, $(-1)^{\lfloor p/4\rfloor}=(-1)^l=(\frac{-2}{p})$. Now noticing that
$$
C_{2k}=\frac{1}{2k+1}{4k\choose 2k}=\frac{(4k)!}{(2k)!(2k+1)!}=\frac{(\frac{3}{4})_k(\frac{1}{4})_k}{(2k+1)!}\,(64)^k
$$
and replacing $t$ by $64t$ in Theorem \ref{TM}, we get the desired congruences.
\end{proof}
\begin{corollary} \label{C1}
Let $p$ be an odd prime and let $t\in D_p$. Then
\begin{align*}
\sum_{k=0}^{p-1}C_{2k}t^k&\equiv \frac{1}{2}\left(\frac{-1}{p}\right)\Bigl(3w_{\lfloor\frac{p}{4}\rfloor}(1-32t)-w_{\lfloor\frac{3p}{4}\rfloor}(1-32t)\Bigr) \pmod{p}, \\
\sum_{k=0}^{p-1}{4k\choose 2k} t^k&\equiv \frac{1}{4}\left(\frac{-2}{p}\right)\Bigl(w_{\lfloor\frac{3p}{4}\rfloor}(32t-1)+3w_{\lfloor\frac{p}{4}\rfloor}(32t-1)\Bigr) \pmod{p}.
\end{align*}
\end{corollary}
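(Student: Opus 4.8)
The plan is to obtain Corollary~\ref{C1} as the $m=4$ specialization of Corollary~\ref{CM}, exactly paralleling the way Theorem~\ref{T1} is read off from Theorem~\ref{TM}. Thus I would start from the two congruences of Corollary~\ref{CM} with $m=4$, namely
\[
\sum_{k=0}^{p-1}\frac{\left(\frac14\right)_k\left(\frac34\right)_k}{(2k+1)!}\,t^k\equiv \frac{1}{4(1+2\lfloor p/4\rfloor)}\Bigl(3\,w_{\lfloor\frac{p}{4}\rfloor}(1-t/2)-w_{\lfloor\frac{3p}{4}\rfloor}(1-t/2)\Bigr)\pmod p
\]
and the companion $(2k)!$ congruence with prefactor $(-1)^{\lfloor p/4\rfloor}/4$ and a plus sign between the two $w$-terms. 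All the genuinely nontrivial content (that the summands with $\lfloor p/4\rfloor<k<(p-1)/2$ and with $\lfloor 3p/4\rfloor<k\le p-1$ either have $p$-coprime factorial denominators killed by the vanishing Pochhammer numerators, or carry a net factor $p$) is already packaged inside Corollary~\ref{CM}; so the remaining work is purely the substitution bookkeeping, and I do not expect a real obstacle here.

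The key algebraic ingredients are the two combinatorial identities used in Theorem~\ref{T1}, namely $C_{2k}=\frac{\left(\frac14\right)_k\left(\frac34\right)_k}{(2k+1)!}\,64^k$ and ${4k\choose 2k}=\frac{\left(\frac14\right)_k\left(\frac34\right)_k}{(2k)!}\,64^k$, together with the two constant reductions derived there from writing $p=4l+r$ with $r\in\{1,3\}$: first $\frac{1}{1+2\lfloor p/4\rfloor}\equiv 2(-1)^{(p-1)/2}=2\left(\frac{-1}{p}\right)\pmod p$, and second $(-1)^{\lfloor p/4\rfloor}=\left(\frac{-2}{p}\right)$. Replacing $t$ by $64t$ in the two displayed congruences then turns the left-hand sides into $\sum_{k=0}^{p-1}C_{2k}t^k$ and $\sum_{k=0}^{p-1}{4k\choose 2k}t^k$, while the arguments $1-t/2$ and $t/2-1$ become $1-32t$ and $32t-1$ respectively. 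The only thing to verify is that the scalar prefactors collapse correctly.

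For the Catalan sum, $\frac{1}{4(1+2\lfloor p/4\rfloor)}\equiv \frac14\cdot 2\left(\frac{-1}{p}\right)=\frac12\left(\frac{-1}{p}\right)$, which immediately yields the first congruence of the corollary; for the binomial sum, $\frac{(-1)^{\lfloor p/4\rfloor}}{4}=\frac14\left(\frac{-2}{p}\right)$ gives the second. As a cross-check (and an equally valid alternative route), one may instead add the two partial-range congruences of Theorem~\ref{T1}: since the intermediate and tail terms vanish modulo $p$, $\sum_{k=0}^{p-1}=\sum_{k=0}^{\lfloor p/4\rfloor}+\sum_{k=(p-1)/2}^{\lfloor 3p/4\rfloor}$ for the Catalan case and with lower index $(p+1)/2$ for the binomial case; then using $(-1)^{(p+1)/2}=-\left(\frac{-1}{p}\right)$, the combination $2\left(\frac{-1}{p}\right)w_{\lfloor p/4\rfloor}-\frac12\left(\frac{-1}{p}\right)\bigl(w_{\lfloor 3p/4\rfloor}+w_{\lfloor p/4\rfloor}\bigr)$ telescopes to $\frac12\left(\frac{-1}{p}\right)\bigl(3w_{\lfloor p/4\rfloor}-w_{\lfloor 3p/4\rfloor}\bigr)$, and the analogous cancellation $w_{\lfloor p/4\rfloor}+\frac14\bigl(w_{\lfloor 3p/4\rfloor}-w_{\lfloor p/4\rfloor}\bigr)=\frac14\bigl(3w_{\lfloor p/4\rfloor}+w_{\lfloor 3p/4\rfloor}\bigr)$ settles the binomial identity. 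Either way the proof reduces to routine arithmetic once the two factorial identities and the two constant reductions are in hand.
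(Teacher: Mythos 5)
Your proposal is correct and matches the paper's intended derivation: the paper states Corollary~\ref{C1} without a separate proof precisely because it follows from Corollary~\ref{CM} with $m=4$ via the same substitution $t\mapsto 64t$, the identities $C_{2k}=\frac{(\frac14)_k(\frac34)_k}{(2k+1)!}64^k$ and ${4k\choose 2k}=\frac{(\frac14)_k(\frac34)_k}{(2k)!}64^k$, and the constant reductions $\frac{1}{1+2\lfloor p/4\rfloor}\equiv 2\left(\frac{-1}{p}\right)$ and $(-1)^{\lfloor p/4\rfloor}=\left(\frac{-2}{p}\right)$ already used in the proof of Theorem~\ref{T1}. Your cross-check by summing the two partial-range congruences of Theorem~\ref{T1} is the same argument repackaged, since the vanishing of the intermediate and tail terms is exactly the content of the proof of Corollary~\ref{CM}.
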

Evaluating values of the sequences $w_{\lfloor\frac{p}{4}\rfloor}(x)$ and $w_{\lfloor\frac{3p}{4}\rfloor}(x)$ modulo $p$, we get numerical congruences for the above sums. Here are some typical examples.
\begin{corollary}
 Let $p$ be a prime greater than $3$. Then
\begin{align*}
\sum_{k=0}^{p-1}\frac{C_{2k}}{16^k}&\equiv
\left(\frac{2}{p}\right),
\qquad\quad
\sum_{k=0}^{\lfloor p/4\rfloor}\frac{C_{2k}}{16^k}\equiv
2\left(\frac{2}{p}\right) \pmod{p}, \\
\sum_{k=0}^{p-1}\frac{{4k\choose 2k}}{16^k}&\equiv \frac{1}{4}\left(\frac{2}{p}\right),
\qquad\,
\sum_{k=\frac{p+1}{2}}^{\lfloor3p/4\rfloor}\frac{{4k\choose 2k}}{16^k}\equiv -\frac{1}{4}\left(\frac{2}{p}\right) \pmod{p},
\end{align*}
\vspace{-0.5cm}
\begin{align*}
\sum_{k=0}^{\lfloor p/4\rfloor}\frac{C_{2k}}{32^k}&\equiv
2\left(\frac{-1}{p}\right)(-1)^{\lfloor p/8\rfloor},
\qquad
\sum_{k=0}^{\lfloor p/4\rfloor}\frac{{4k\choose 2k}}{32^k}\equiv \left(\frac{-2}{p}\right)(-1)^{\lfloor p/8\rfloor} \pmod{p}, \\[2pt]
\sum_{k=0}^{p-1}\frac{C_{2k}}{32^k}&\equiv
\begin{cases}
\ds
(-1)^{(p-1)/2+\lfloor p/8\rfloor} \pmod{p},
&  \text{if $p\equiv \pm 1 \pmod{8};$} \\[3pt]
\ds 2(-1)^{(p-1)/2+\lfloor p/8\rfloor} \pmod{p}, &   \text{if $p\equiv \pm 3  \pmod{8},$}
\end{cases}
\\[3pt]
\sum_{k=0}^{p-1}\frac{{4k\choose 2k}}{32^k}&\equiv
\begin{cases}
\ds
\Bigl(\frac{-2}{p}\Bigr)(-1)^{\lfloor p/8\rfloor} \pmod{p},
&  \text{if $p\equiv \pm 1 \pmod{8}$;} \\[3pt]
\ds \frac{1}{2}\Bigl(\frac{-2}{p}\Bigr)(-1)^{\lfloor p/8\rfloor} \pmod{p}, &   \text{if $p\equiv \pm 3 \pmod{8}$.}
\end{cases}
\end{align*}
\end{corollary}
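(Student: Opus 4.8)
The plan is to feed the two special values $t=\tfrac1{16}$ and $t=\tfrac1{32}$ into Theorem~\ref{T1} and Corollary~\ref{C1}, and then to evaluate the resulting Jacobi-type values $w_{\lfloor p/4\rfloor}$ and $w_{\lfloor 3p/4\rfloor}$ at the three arguments that arise. Indeed, at $t=\tfrac1{16}$ one has $1-32t=-1$ and $32t-1=1$, while at $t=\tfrac1{32}$ both arguments $1-32t$ and $32t-1$ collapse to $0$. So the whole corollary reduces to knowing $w_n$ at $x\in\{1,-1,0\}$. The first two come for free from \eqref{eq06}: $w_n(1)=2n+1$ and $w_n(-1)=(-1)^n$. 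For the third I would use the trigonometric form \eqref{eq23.55} with $\arccos 0=\pi/2$, giving $w_n(0)=\cos(n\pi/2)+\sin(n\pi/2)=(-1)^{\lfloor n/2\rfloor}$ (equivalently, feed $\alpha=i$ into \eqref{eq06}). I would also keep on hand the two arithmetic facts already isolated in the proof of Theorem~\ref{T1}: $(-1)^{\lfloor p/4\rfloor}=(\frac{-2}{p})$ and $\lfloor 3p/4\rfloor=p-1-\lfloor p/4\rfloor$, the latter yielding $(-1)^{\lfloor 3p/4\rfloor}=(-1)^{\lfloor p/4\rfloor}$ and $4\lfloor p/4\rfloor\equiv -r\pmod p$, where $r=p\bmod 4$.

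For the $t=\tfrac1{16}$ line there is no dependence on $p\bmod 8$, so this is pure substitution. In the Catalan sums I plug $w_n(-1)=(-1)^n$ into the first lines of Theorem~\ref{T1} and Corollary~\ref{C1}; the prefactor $(-1)^{(p-1)/2}(-1)^{\lfloor p/4\rfloor}=(\frac{-1}{p})(\frac{-2}{p})=(\frac{2}{p})$ is what survives, and since $w_{\lfloor 3p/4\rfloor}(-1)=w_{\lfloor p/4\rfloor}(-1)$ the combination $3w_{\lfloor p/4\rfloor}(-1)-w_{\lfloor 3p/4\rfloor}(-1)$ is just $2(-1)^{\lfloor p/4\rfloor}$. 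In the binomial sums I use $w_n(1)=2n+1$, so the right-hand sides become linear in $\lfloor p/4\rfloor$ and $\lfloor 3p/4\rfloor$; reducing with $4\lfloor p/4\rfloor\equiv -r$ and the observation $2-r=(\frac{-1}{p})$ turns the factor $(\frac{-2}{p})$ into $(\frac{2}{p})$ and produces the coefficients $\tfrac14$ and $-\tfrac14$.

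The $t=\tfrac1{32}$ line is where the two-case answer comes from, since everything reduces to $w_{\lfloor p/4\rfloor}(0)$ and $w_{\lfloor 3p/4\rfloor}(0)$, i.e.\ to $(-1)^{\lfloor p/8\rfloor}$ and $(-1)^{\lfloor\lfloor 3p/4\rfloor/2\rfloor}$. The single-truncation congruences are immediate from the nested-floor identity $\lfloor\lfloor p/4\rfloor/2\rfloor=\lfloor p/8\rfloor$, which gives $w_{\lfloor p/4\rfloor}(0)=(-1)^{\lfloor p/8\rfloor}$. For the two full sums I would write $p=8j+s$ with $s\in\{1,3,5,7\}$ and compute $\lfloor 3p/4\rfloor\bmod 4$ in each class; the outcome is that $w_{\lfloor 3p/4\rfloor}(0)=w_{\lfloor p/4\rfloor}(0)$ when $p\equiv\pm1\pmod 8$ but $w_{\lfloor 3p/4\rfloor}(0)=-w_{\lfloor p/4\rfloor}(0)$ when $p\equiv\pm3\pmod 8$. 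Hence $3w_{\lfloor p/4\rfloor}(0)\mp w_{\lfloor 3p/4\rfloor}(0)$ collapses to either $2$ or $4$ times $w_{\lfloor p/4\rfloor}(0)$, and after pairing $(\frac{-1}{p})=(-1)^{(p-1)/2}$ with $(-1)^{\lfloor p/8\rfloor}$ one gets exactly the coefficient $1$ (resp.\ $2$) and the exponent $(p-1)/2+\lfloor p/8\rfloor$ in the stated formulas; the binomial full sum is handled identically, carrying the extra $(\frac{-2}{p})$.

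The only genuinely nonroutine step is this last bookkeeping: tracking the $\lfloor\cdot/2\rfloor$-parity of $\lfloor 3p/4\rfloor$ across the four residue classes modulo $8$. That parity flip is precisely what makes $3w\mp w$ equal to $2w$ or $4w$, and therefore what splits the full-sum congruences into the $p\equiv\pm1$ and $p\equiv\pm3$ cases. Everything else is direct substitution of the three closed-form values of $w_n$ together with the elementary Legendre-symbol identities recalled above.
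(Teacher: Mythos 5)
Your proposal is correct and follows exactly the paper's own route: the paper proves this corollary by substituting $t=\tfrac1{16}$ and $t=\tfrac1{32}$ into Theorem~\ref{T1} and Corollary~\ref{C1} and invoking the three closed-form values $w_n(-1)=(-1)^n$, $w_n(0)=(-1)^{\lfloor n/2\rfloor}$, $w_n(1)=2n+1$, which is precisely your argument. The only difference is that the paper leaves the bookkeeping (the identity $\lfloor 3p/4\rfloor=p-1-\lfloor p/4\rfloor$, the nested-floor identity, and the mod-$8$ parity analysis of $\lfloor 3p/8\rfloor$) implicit, while you have carried it out explicitly and correctly.
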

\begin{proof}
The proof easily follows from the fact that
\begin{equation} \label{w-101}
w_n(-1)=(-1)^n, \quad w_n(0)=(-1)^{\lfloor n/2\rfloor}, \quad\text{and}\quad w_n(1)=2n+1.
\end{equation}
\end{proof}
\begin{corollary}
Let $p$ be a prime greater than $3$. Then
\begin{align*}
\left(\frac{2}{p}\right)\sum_{k=0}^{p-1}\frac{C_{2k}}{64^k}&\equiv
\begin{cases}
\ds
1 \pmod{p},
&  \text{if $p\equiv \pm 1 \pmod{12}$;} \\
\ds -7/2
 \pmod{p}, &   \text{if $p\equiv \pm 5  \pmod{12}$,}
\end{cases}
\\[2pt]
\left(\frac{2}{p}\right)\sum_{k=0}^{p-1}
\frac{{4k\choose 2k}}{64^k}&\equiv
\begin{cases}
\ds
1 \pmod{p},
& \text{if $p\equiv \pm 1 \pmod{12};$} \\
\ds 1/4 \pmod{p}, &   \text{if $p\equiv \pm 5 \pmod{12}$,}
\end{cases}
\\[2pt]
\sum_{k=0}^{p-1}C_{2k}\left(\frac{3}{64}\right)^k&\equiv
\begin{cases}
\ds
1 \pmod{p},
&  \text{if $p\equiv \pm 1 \pmod{12}$;} \\
\ds 1/2
 \pmod{p}, &   \text{if $ p\equiv \pm 5  \pmod{12}$,}
\end{cases}
\\[2pt]
\sum_{k=0}^{p-1}
{4k\choose 2k}\left(\frac{3}{64}\right)^k&\equiv
\begin{cases}
\ds
1 \pmod{p},
&  \text{if $p\equiv \pm 1 \pmod{12}$;} \\
\ds -5/4 \pmod{p}, &   \text{if $p\equiv \pm 5 \pmod{12}$.}
\end{cases}
\end{align*}
\end{corollary}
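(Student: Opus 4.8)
The plan is to specialize Corollary~\ref{C1} at the two values $t=1/64$ and $t=3/64$ and then evaluate the resulting Jacobi-polynomial values in closed form. Computing the arguments, one finds $1-32t=1/2$ and $32t-1=-1/2$ when $t=1/64$, while $1-32t=-1/2$ and $32t-1=1/2$ when $t=3/64$. Hence each of the four sums is governed entirely by the four numbers
\[
w_{\lfloor p/4\rfloor}(1/2),\quad w_{\lfloor 3p/4\rfloor}(1/2),\quad w_{\lfloor p/4\rfloor}(-1/2),\quad w_{\lfloor 3p/4\rfloor}(-1/2),
\]
so the whole problem reduces to evaluating $w_n(\pm 1/2)$ for $n\in\{\lfloor p/4\rfloor,\lfloor 3p/4\rfloor\}$.

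First I would record closed forms for these values. Setting $x=1/2$ in (\ref{eq06}) gives $\alpha=\tfrac12+\tfrac{\sqrt{-3}}{2}=e^{i\pi/3}$, a primitive sixth root of unity, and a short computation (equivalently, directly from (\ref{eq23.55})) yields $w_n(1/2)=2\sin\bigl((2n+1)\pi/6\bigr)$, which runs through the period-six integer cycle $1,2,1,-1,-2,-1$ as $n\equiv 0,1,\dots,5\pmod 6$. Similarly, $x=-1/2$ gives $\alpha=e^{2i\pi/3}$, a primitive cube root of unity, and $w_n(-1/2)=\tfrac{2}{\sqrt3}\sin\bigl((2n+1)\pi/3\bigr)$, which runs through the period-three cycle $1,0,-1$ as $n\equiv 0,1,2\pmod 3$. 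The crucial point is that all these values are ordinary integers, so they pass unchanged to residues modulo $p$.

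Next I would reduce the indices. Using $\lfloor 3p/4\rfloor=p-1-\lfloor p/4\rfloor$ (as in the proof of Theorem~\ref{TM}), both indices are determined by $\lfloor p/4\rfloor$; and since the cycles have lengths six and three, the residues $\lfloor p/4\rfloor\bmod 6$ and $\lfloor 3p/4\rfloor\bmod 6$ are pinned down by $p\bmod 24$. In parallel I would evaluate the Legendre symbols $(\tfrac{-1}{p})$, $(\tfrac{2}{p})$, $(\tfrac{-2}{p})$ that appear (after multiplying the first pair of congruences by $(\tfrac{2}{p})$, which replaces $(\tfrac{-1}{p})$ by $(\tfrac{-2}{p})$ in the Catalan sum and $(\tfrac{-2}{p})$ by $(\tfrac{-1}{p})$ in the binomial sum) from the class of $p\bmod 8$. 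Substituting these data into Corollary~\ref{C1} for each admissible residue of $p$ modulo $24$ then produces an explicit value for each sum.

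The main obstacle --- really the only nontrivial point --- is that the individual value $w_{\lfloor p/4\rfloor}(1/2)$ genuinely depends on $p\bmod 24$ (for instance it equals $1$ when $p\equiv 1\pmod{24}$ but $-1$ when $p\equiv 13\pmod{24}$), whereas the asserted answers depend only on $p\bmod 12$. The resolution is that the extra sign produced by refining $p\bmod 12$ to $p\bmod 24$ is exactly cancelled by the corresponding change in the quadratic symbol $(\tfrac{-2}{p})$ or $(\tfrac{-1}{p})$, which also depends only on $p\bmod 8$ and hence on the $\bmod 24$ class. Thus I would carry out the verification over all eight residues of $p$ modulo $24$ and check that, within each class modulo $12$, the product of the symbol with the relevant combination of $w$-values collapses to the stated constant: the $\pm 1\pmod{12}$ classes all give $1$ for the first two sums, while the $\pm 5\pmod{12}$ classes give $-7/2$ and $1/4$ respectively, and the analogous check handles the $t=3/64$ sums. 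This final step is a routine but slightly tedious finite computation, and it is where care is needed to keep the floor functions and the quadratic symbols mutually consistent.
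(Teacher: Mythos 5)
Your approach coincides with the paper's own proof: the paper likewise specializes Corollary \ref{C1} at $t=1/64$ and $t=3/64$, evaluates $w_n(\pm 1/2)$ in closed form (formula (\ref{12-1}), which is equivalent to your $w_n(1/2)=2\sin((2n+1)\pi/6)$ and $w_n(-1/2)=\tfrac{2}{\sqrt3}\sin((2n+1)\pi/3)$), and then collapses the signs; the only difference is bookkeeping, since the paper writes the values in terms of $(-1)^{\lfloor p/4\rfloor}$ and $(-1)^{(p-1)/2}$ and invokes the identity $(-1)^{(p-1)/2+\lfloor p/4\rfloor}=\bigl(\tfrac{2}{p}\bigr)$, while you propose an exhaustive check over the eight residue classes of $p$ modulo $24$. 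Your period-six and period-three cycles are correct, and so are your claimed values for the two $t=1/64$ sums.

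The genuine defect sits exactly in the step you leave unexecuted (``the analogous check handles the $t=3/64$ sums''): that check does \emph{not} confirm the statement as printed. For $p\equiv\pm5\pmod{12}$ one has $w_{\lfloor p/4\rfloor}(-1/2)\equiv 0$ and $w_{\lfloor 3p/4\rfloor}(-1/2)\equiv(-1)^{(p-1)/2}\pmod{p}$, so Corollary \ref{C1} yields
\[
\sum_{k=0}^{p-1}C_{2k}\left(\frac{3}{64}\right)^k\equiv\frac{1}{2}\left(\frac{-1}{p}\right)\left(3\cdot 0-(-1)^{(p-1)/2}\right)\equiv-\frac{1}{2}\pmod{p},
\]
whereas the third congruence of the corollary asserts $+\tfrac12$. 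Your mod-$24$ table would produce the same $-\tfrac12$, and a direct computation confirms it: for $p=7$ (where $3/64\equiv 3$) one gets $\sum_{k=0}^{6}C_{2k}3^k\equiv 1+6+0+36+8+15+0=66\equiv 3\equiv-\tfrac12\pmod 7$, while $+\tfrac12\equiv 4\pmod 7$. The same value $-\tfrac12$ also follows from the paper's later theorem on $\sum_{k}C_{2k}\bigl((1-t^2)/16\bigr)^k$ at $t=1/2$, whose right-hand side is $\tfrac14+\tfrac34\bigl(\tfrac{3}{p}\bigr)$. So the printed statement carries a sign error in the third congruence (the correct value is $-1/2$), and this is what both your method and the paper's own proof actually produce; the other three congruences, including $-5/4$ in the fourth, do pass the check. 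Had you carried out the verification you promise rather than asserting its outcome, you would have found that it refutes one of the four printed congruences instead of confirming it.
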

\begin{proof}
 We can easily evaluate by (\ref{eq23.55}),
\begin{equation} \label{12-1}
w_n(1/2)=2\cos\Bigl(\frac{\pi(n-1)}{3}\Bigr)\quad\text{and} \quad w_n(-1/2)=\frac{2}{\sqrt{3}}\sin\Bigl(\frac{\pi(2n+1)}{3}\Bigr).
\end{equation}
Hence we  obtain
\begin{align*}
w_{\lfloor\frac{p}{4}\rfloor}(1/2)&\equiv
\begin{cases}
\ds
(-1)^{\lfloor p/4\rfloor} \pmod{p},
&  \text{if $p\equiv \pm 1 \pmod{12}$;} \\
\ds -2(-1)^{\lfloor p/4\rfloor} \pmod{p}, &   \text{if $p\equiv \pm 5 \pmod{12}$,}
\end{cases}
\\[2pt]
w_{\lfloor\frac{p}{4}\rfloor}(-1/2)&\equiv
\begin{cases}
\ds
(-1)^{(p-1)/2} \pmod{p},
&  \text{if $p\equiv \pm 1 \pmod{12}$;} \\
\ds 0 \pmod{p}, &   \text{if $p\equiv \pm 5 \pmod{12}$}
\end{cases}
\end{align*}
and $w_{\lfloor 3p/4\rfloor}(1/2)\equiv (-1)^{\lfloor p/4\rfloor}$,
$w_{\lfloor 3p/4\rfloor}(-1/2)\equiv (-1)^{(p-1)/2}$ (mod $p$).
Applying Corollary \ref{C1} and the equality
$(-1)^{(p-1)/2+\lfloor p/4\rfloor}=\bigl(\frac{2}{p}\bigr)$,
we get the desired congruences.
\end{proof}
\begin{lemma} \label{L11}
For any $x\ne \pm 1$, we have
$$
w_n(2x^2-1)=\frac{\alpha^{2n+1}-\alpha^{-2n-1}}{\alpha-\alpha^{-1}},
\qquad\text{where}\quad \alpha=x+\sqrt{x^2-1}.
$$
\end{lemma}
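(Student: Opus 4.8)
The plan is to avoid substituting directly into the closed form (\ref{eq06})---which would force an awkward case split at $x=0$, where $2x^2-1=-1$ falls into the exceptional branch---and instead verify that both sides satisfy the same second-order recurrence with the same initial data. Write $u_n:=\frac{\alpha^{2n+1}-\alpha^{-2n-1}}{\alpha-\alpha^{-1}}$ for the right-hand side, with $\alpha=x+\sqrt{x^2-1}$. Since $\alpha\alpha^{-1}=1$, we have $\alpha-\alpha^{-1}=0$ exactly when $\alpha=\pm1$, i.e.\ only when $x=\pm1$; hence $u_n$ is well defined for every $x\ne\pm1$, which is precisely the hypothesis.

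First I would record the two elementary identities $\alpha+\alpha^{-1}=2x$ and $\alpha^2+\alpha^{-2}=(\alpha+\alpha^{-1})^2-2=4x^2-2=2(2x^2-1)$. Next, expanding and factoring the numerator of $u_{n+1}+u_{n-1}$ gives $\alpha^{2n+3}+\alpha^{2n-1}-\alpha^{-2n-3}-\alpha^{-2n+1}=(\alpha^2+\alpha^{-2})(\alpha^{2n+1}-\alpha^{-2n-1})$, so that $u_{n+1}+u_{n-1}=(\alpha^2+\alpha^{-2})u_n=2(2x^2-1)u_n$. Thus $u_n$ obeys $u_{n+1}=2(2x^2-1)u_n-u_{n-1}$, which is exactly the recurrence satisfied by $w_n(2x^2-1)$, since the sequence $w_n$ satisfies $w_{n+1}(y)=2yw_n(y)-w_{n-1}(y)$ evaluated at $y=2x^2-1$.

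It then remains to match the first two terms. Here $u_0=\frac{\alpha-\alpha^{-1}}{\alpha-\alpha^{-1}}=1$, and using $\frac{\alpha^3-\alpha^{-3}}{\alpha-\alpha^{-1}}=\alpha^2+1+\alpha^{-2}$ one finds $u_1=1+2(2x^2-1)$; these agree with $w_0(y)=1$ and $w_1(y)=1+2y$ at $y=2x^2-1$. Since two sequences satisfying the same second-order linear recurrence with identical initial values coincide, a straightforward induction on $n$ yields $u_n=w_n(2x^2-1)$ for all $n\ge0$, which is the assertion of the lemma.

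I expect no serious obstacle; the only point requiring care is the degenerate value $x=0$ (equivalently $2x^2-1=-1$), which the direct-substitution route would have to treat separately but which the recurrence argument absorbs automatically, as $u_n$ and its recurrence remain valid whenever $\alpha-\alpha^{-1}\ne0$. As a consistency check one may note that at $x=0$ we have $\alpha=i$, giving $u_n=(-1)^n=w_n(-1)$, in agreement with (\ref{eq06}).
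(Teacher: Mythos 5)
Your proof is correct, but it takes a genuinely different route from the paper's. The paper works directly from the closed form (\ref{eq06}): it expands $w_{2n}(x)$, rewrites it over the denominator $\alpha^2-\alpha^{-2}$, recognizes one piece as $w_n(2x^2-1)$ (since the quantity playing the role of $\alpha$ for the argument $2x^2-1$ is $\alpha^2$), and then solves for $w_n(2x^2-1)$, obtaining along the way the duplication-type identity $w_{2n}(x)=w_n(2x^2-1)+\frac{\alpha^{2n}-\alpha^{-2n}}{\alpha-\alpha^{-1}}$. You instead verify that the right-hand side $u_n$ satisfies the three-term recurrence $u_{n+1}=2(2x^2-1)u_n-u_{n-1}$ (via $\alpha^2+\alpha^{-2}=2(2x^2-1)$) together with the initial values $u_0=1$, $u_1=1+2(2x^2-1)$, and invoke uniqueness of solutions of the recurrence $w_{n+1}(y)=2yw_n(y)-w_{n-1}(y)$ stated after (\ref{eq05}); all of these computations check out. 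Each approach buys something: the paper's is shorter given (\ref{eq06}) and produces the $w_{2n}$--$w_n$ relation as a by-product of independent interest, while yours is more elementary and, as you observe, uniform in $x$. Indeed, your side remark identifies a small gap in the paper's own argument: invoking (\ref{eq06}) at the argument $2x^2-1$ tacitly requires $2x^2-1\ne\pm1$, so the paper's proof as written breaks down at $x=0$ (where $\alpha^2-\alpha^{-2}=0$); that degenerate case is trivially settled by $w_n(-1)=(-1)^n$ and $\alpha=i$, but the paper does not mention it, whereas your recurrence argument needs only $\alpha\ne\alpha^{-1}$, i.e., exactly the hypothesis $x\ne\pm1$.
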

\begin{proof}
By (\ref{eq06}), we obtain
\begin{equation*}
\begin{split}
w_{2n}(x)&=
\frac{(\alpha+1)\alpha^{2n}
-(\alpha^{-1}+1)\alpha^{-2n}}{\alpha-\alpha^{-1}} \\[2pt]
&=\frac{(\alpha^2+1)\alpha^{2n}-(\alpha^{-2}+1)\alpha^{-2n}+(\alpha+\alpha^{-1})(\alpha^{2n}-\alpha^{-2n})}{\alpha^2-\alpha^{-2}}\\
&=w_n(2x^2-1)+\frac{\alpha^{2n}-\alpha^{-2n}}{\alpha-\alpha^{-1}}.
\end{split}
\end{equation*}
This implies
\begin{equation*}
w_n(2x^2-1)=w_{2n}(x)-
\frac{\alpha^{2n}-\alpha^{-2n}}{\alpha-\alpha^{-1}}=
\frac{\alpha^{2n+1}-\alpha^{-2n-1}}{\alpha-\alpha^{-1}},
\end{equation*}
and the lemma follows.
\end{proof}
\begin{lemma} \label{L12}
Let $p$ be a prime, $p>3$, and let $x\in D_p$.
Then
\begin{align*}
w_{\lfloor\frac{p}{4}\rfloor}(2x^2-1)&\equiv \frac{1}{2}\left(\frac{-2}{p}\right)
\left(\left(\frac{1-x}{p}\right)+\left(\frac{1+x}{p}\right)\right)
\pmod{p}, \\
w_{\lfloor\frac{3p}{4}\rfloor}(2x^2-1)&\equiv \frac{1}{2}\left(\frac{-2}{p}\right)
\left(\left(\frac{1-x}{p}\right)(1+2x)+
\left(\frac{1+x}{p}\right)(1-2x)\right)
\pmod{p}.
\end{align*}
\end{lemma}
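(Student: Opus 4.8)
The plan is to feed Lemma~\ref{L11} into a Frobenius computation and then evaluate the resulting power of $\alpha$ modulo $p$. Throughout I would work in $\mathbb{F}_p$ or its quadratic extension $\mathbb{F}_{p^2}$ (where $\sqrt{x^2-1}$ lives), setting $\beta=\sqrt{x^2-1}$, $\alpha=x+\beta$, and abbreviating $\epsilon=\left(\frac{x^2-1}{p}\right)$. The degenerate values $x\equiv\pm1\pmod p$ (where $\beta\equiv0$ and Lemma~\ref{L11} does not apply) I would treat separately via $w_n(2x^2-1)=w_n(1)=2n+1$ from \eqref{w-101}, checking directly that the stated formula holds there since one of the two symbols $\left(\frac{1\pm x}{p}\right)$ vanishes. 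For $x\not\equiv\pm1$, Lemma~\ref{L11} gives $w_n(2x^2-1)=(\alpha^{2n+1}-\alpha^{-2n-1})/(\alpha-\alpha^{-1})$ with $n=\lfloor p/4\rfloor$, so everything reduces to understanding $\alpha^{2n+1}$ modulo $p$.

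First I would record the two facts that drive the computation. Since $\beta^{p-1}=(x^2-1)^{(p-1)/2}\equiv\epsilon$ and $x^p\equiv x$, the Frobenius map yields $\alpha^p\equiv x+\epsilon\beta$, whence $\alpha^{p-1}\equiv1$ when $\epsilon=1$ and $\alpha^{p+1}\equiv1$ when $\epsilon=-1$. Second, from $\alpha+\alpha^{-1}=2x$ one obtains the key half-angle identities $(\alpha+1)^2=2\alpha(1+x)$ and $(\alpha-1)^2=2\alpha(x-1)$. Raising the first to the power $(p-1)/2$ and invoking Euler's criterion gives $\alpha^{(p-1)/2}\equiv\left(\frac{2}{p}\right)\left(\frac{1+x}{p}\right)(\alpha+1)^{p-1}$, while a direct Frobenius evaluation shows $(\alpha+1)^{p-1}\equiv1$ if $\epsilon=1$ and $(\alpha+1)^{p-1}\equiv\alpha^{-1}$ if $\epsilon=-1$. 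Thus in every case $\alpha^{(p-1)/2}$ equals an explicit sign times a small power of $\alpha$.

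Next I would pin down the exponent: writing $p=4\lfloor p/4\rfloor+r$ with $r\in\{1,3\}$ gives $2n+1=2\lfloor p/4\rfloor+1=(p+1)/2$ when $p\equiv1\pmod4$ and $(p-1)/2$ when $p\equiv3\pmod4$. Substituting the expression for $\alpha^{(p-1)/2}$ (and multiplying by $\alpha$ in the $p\equiv1$ case) into $\alpha^{2n+1}-\alpha^{-2n-1}$ and dividing by $\alpha-\alpha^{-1}$ then produces, in each of the four cases indexed by $(p\bmod4,\epsilon)$, either $0$ or $\pm\left(\frac{2}{p}\right)\left(\frac{1+x}{p}\right)$. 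To match these against the right-hand side I would use $\left(\frac{x^2-1}{p}\right)=\left(\frac{-1}{p}\right)\left(\frac{1-x}{p}\right)\left(\frac{1+x}{p}\right)=\epsilon$, which forces $\left(\frac{1-x}{p}\right)$ and $\left(\frac{1+x}{p}\right)$ to be equal precisely in the cases where the computed value is nonzero and opposite where it is zero, so that $\frac12\left(\frac{-2}{p}\right)\bigl(\left(\frac{1-x}{p}\right)+\left(\frac{1+x}{p}\right)\bigr)$ reproduces the answer. This settles the first congruence.

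For the second congruence I would exploit $\lfloor 3p/4\rfloor=p-1-\lfloor p/4\rfloor$, so that $m':=2\lfloor 3p/4\rfloor+1=2p-(2n+1)$ and hence $\alpha^{m'}\equiv(\alpha^p)^2\,\alpha^{-(2n+1)}$. Squaring the Frobenius relation gives the clean identity $(\alpha^p)^2\equiv\alpha^{2\epsilon}$, so the entire computation of $w_{\lfloor 3p/4\rfloor}(2x^2-1)$ reduces to the four values of $\alpha^{2n+1}$ already obtained, now generating the factors $1\pm2x$ through $(\alpha^2-\alpha^{-2})/(\alpha-\alpha^{-1})=\alpha+\alpha^{-1}=2x$; matching with the stated right-hand side again rests on the relation between $\epsilon$ and the two symbols $\left(\frac{1\pm x}{p}\right)$. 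The main obstacle is purely organizational: keeping the sign bookkeeping straight across the $2\times2$ case split on $p\bmod4$ and $\epsilon$, applying the Frobenius identities correctly in $\mathbb{F}_{p^2}$ when $\epsilon=-1$ (so that $\alpha\notin\mathbb{F}_p$), and verifying that the degenerate cases $x\equiv\pm1$, where a Legendre symbol vanishes, remain consistent with the formula.
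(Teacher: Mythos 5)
Your proposal is correct, and it shares the paper's skeleton: both arguments start from Lemma~\ref{L11}, reduce everything to evaluating $\alpha^{2\lfloor p/4\rfloor+1}$ modulo $p$, split into cases according to $p\bmod 4$, and dispose of the degenerate values $x\equiv\pm1\pmod p$ separately via $w_n(1)=2n+1$. The engine, however, is different. The paper writes $\alpha=\bigl(\sqrt{(x+1)/2}+\sqrt{(x-1)/2}\bigr)^2$ and evaluates $\bigl(\sqrt{(x+1)/2}\pm\sqrt{(x-1)/2}\bigr)^{p+1}$, $(\cdot)^{p-1}$, and, for the second congruence, $(\cdot)^{3p\pm1}$ by raw binomial expansion, using $\binom{p}{k}\equiv0\pmod p$ for $0<k<p$; the Legendre symbols emerge from $(x\pm1)^{(p-1)/2}$ inside those expansions. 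You instead stay inside the quadratic extension $\mathbb{F}_{p^2}$, use the Frobenius relation $\alpha^p\equiv x+\epsilon\beta$ together with the identity $(\alpha+1)^2=2\alpha(1+x)$ and Euler's criterion to pin down $\alpha^{(p-1)/2}$ up to an explicit sign, and then run a clean case analysis over $(p\bmod4,\epsilon)$; I verified all four cases of both congruences, and the matching with the stated right-hand sides works exactly as you say, through $\epsilon=\left(\frac{-1}{p}\right)\left(\frac{1-x}{p}\right)\left(\frac{1+x}{p}\right)$. Your handling of the second congruence is also genuinely more economical than the paper's: where the paper redoes the whole expansion with exponents $(3p\pm1)/2$, you exploit $2\lfloor 3p/4\rfloor+1=2p-\bigl(2\lfloor p/4\rfloor+1\bigr)$ and $(\alpha^p)^2\equiv\alpha^{2\epsilon}$ to recycle the four values already computed, the factors $1\pm2x$ arising from $\alpha+\alpha^{-1}=2x$. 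What the paper's route buys is that it never explicitly invokes $\mathbb{F}_{p^2}$ or its Frobenius automorphism (everything is a formal binomial manipulation with radicals); what yours buys is lighter bookkeeping, no nested radicals $\sqrt{(x\pm1)/2}$, and a reusable reduction of the $\lfloor 3p/4\rfloor$ case to the $\lfloor p/4\rfloor$ case.
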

\begin{proof} First, we suppose that $x\ne \pm 1$. Then, by Lemma \ref{L11},
if $p\equiv 1$ (mod $4$),  we have
\begin{equation*}
\begin{split}
w_{\lfloor\frac{p}{4}\rfloor}(2x^2-1)&=w_{\frac{p-1}{4}}(2x^2-1)=
\frac{\alpha^{\frac{p+1}{2}}
-\alpha^{-\frac{p+1}{2}}}{\alpha-\alpha^{-1}} \\
&=
\frac{\left(\sqrt{\frac{x+1}{2}}
+\sqrt{\frac{x-1}{2}}\right)^{p+1}-
\left(\sqrt{\frac{x+1}{2}}
-\sqrt{\frac{x-1}{2}}\right)^{p+1}}{2\sqrt{x^2-1}} \\
&=\frac{1}{\sqrt{x^2-1}}
\underset{k\, is\, odd}{\sum\limits_{k=1}^{p}}{p\choose k}
\left(\frac{x-1}{2}\right)^{\frac{k}{2}}
\left(\frac{x+1}{2}\right)^{\frac{p+1-k}{2}} \\
&\,\,\, +\frac{1}{\sqrt{x^2-1}}
\underset{k\, is\, even}{\sum\limits_{k=0}^{p-1}}{p\choose k}
\left(\frac{x-1}{2}\right)^{\frac{k+1}{2}}
\left(\frac{x+1}{2}\right)^{\frac{p-k}{2}} \\
&\equiv \frac{(x-1)^{\frac{p-1}{2}}}{2^{\frac{p+1}{2}}}+
\frac{(x+1)^{\frac{p-1}{2}}}{2^{\frac{p+1}{2}}}
\equiv \frac{1}{2}\left(\frac{2}{p}\right)
\left(\left(\frac{x-1}{p}\right)+\left(\frac{x+1}{p}\right)\right)
\pmod{p}.
\end{split}
\end{equation*}
If $p\equiv 3$ (mod $4$), then, by Lemma \ref{L11}, we have
\begin{equation*}
\begin{split}
w_{\lfloor\frac{p}{4}\rfloor}(2x^2&-1)=w_{\frac{p-3}{4}}(2x^2-1)=
\frac{\alpha^{\frac{p-1}{2}}
-\alpha^{-\frac{p-1}{2}}}{\alpha-\alpha^{-1}} \\
&=
\frac{\left(\sqrt{\frac{x+1}{2}}
+\sqrt{\frac{x-1}{2}}\right)^{p-1}-
\left(\sqrt{\frac{x+1}{2}}
-\sqrt{\frac{x-1}{2}}\right)^{p-1}}{2\sqrt{x^2-1}} \\
&=\frac{\left(\sqrt{\frac{x+1}{2}}
-\sqrt{\frac{x-1}{2}}\right)\left(\sqrt{\frac{x+1}{2}}
+\sqrt{\frac{x-1}{2}}\right)^{p}-
\left(\sqrt{\frac{x+1}{2}}
+\sqrt{\frac{x-1}{2}}\right)\left(\sqrt{\frac{x+1}{2}}
-\sqrt{\frac{x-1}{2}}\right)^{p}}{2\sqrt{x^2-1}}.
\end{split}
\end{equation*}
Simplifying as in the previous case, we get modulo $p$,
$$
w_{\lfloor\frac{p}{4}\rfloor}(2x^2-1)\equiv \frac{1}{2}\left(
\left(\frac{x-1}{2}\right)^{\frac{p-1}{2}}-
\left(\frac{x+1}{2}\right)^{\frac{p-1}{2}}\right)
\equiv \frac{1}{2}\left(\frac{2}{p}\right)
\left(\left(\frac{x-1}{p}\right)-\left(\frac{x+1}{p}\right)\right),
$$
and the first congruence of the lemma follows.
Similarly, to prove the second congruence, we consider two cases.
If $p\equiv 1$ (mod $4$), then we get
\begin{equation*}
\begin{split}
w_{\lfloor\frac{3p}{4}\rfloor}(2x^2&-1)=w_{\frac{3(p-1)}{4}}(2x^2-1)=
\frac{\alpha^{\frac{3p-1}{2}}
-\alpha^{-\frac{3p-1}{2}}}{\alpha-\alpha^{-1}} \\
&=
\frac{\left(\sqrt{\frac{x+1}{2}}
+\sqrt{\frac{x-1}{2}}\right)^{3p-1}-
\left(\sqrt{\frac{x+1}{2}}
-\sqrt{\frac{x-1}{2}}\right)^{3p-1}}{2\sqrt{x^2-1}} \\
&=\frac{\left(\sqrt{\frac{x+1}{2}}
-\sqrt{\frac{x-1}{2}}\right)\left(\sqrt{\frac{x+1}{2}}
+\sqrt{\frac{x-1}{2}}\right)^{3p}-
\left(\sqrt{\frac{x+1}{2}}
+\sqrt{\frac{x-1}{2}}\right)\left(\sqrt{\frac{x+1}{2}}
-\sqrt{\frac{x-1}{2}}\right)^{3p}}{2\sqrt{x^2-1}}.
\end{split}
\end{equation*}
Simplifying the right-hand side modulo $p$, we obtain
\begin{equation*}
 \frac{1}{2}
\left(\frac{x-1}{2}\right)^{\frac{3p-1}{2}}-
\frac{1}{2}\left(\frac{x+1}{2}\right)^{\frac{3p-1}{2}}+
\frac{3}{2}\left(\frac{x-1}{2}\right)^{\frac{p-1}{2}}
\left(\frac{x+1}{2}\right)^{p}-
\frac{3}{2}\left(\frac{x+1}{2}\right)^{\frac{p-1}{2}}
\left(\frac{x-1}{2}\right)^{p}
\end{equation*}
and therefore,
\begin{equation*}
w_{\lfloor\frac{3p}{4}\rfloor}(2x^2-1)\equiv
\frac{1}{2}\left(\frac{2}{p}\right)
\left(\left(\frac{x-1}{p}\right)(1+2x)+
\left(\frac{x+1}{p}\right)(1-2x)
\right)
\pmod{p}.
\end{equation*}
If $p\equiv 3$ (mod $4$), then
\begin{equation*}
\begin{split}
w_{\lfloor\frac{3p}{4}\rfloor}(2x^2-1)&=w_{\frac{3p-1}{4}}(2x^2-1)=
\frac{\alpha^{\frac{3p+1}{2}}
-\alpha^{-\frac{3p+1}{2}}}{\alpha-\alpha^{-1}} \\
&=
\frac{\left(\sqrt{\frac{x+1}{2}}
+\sqrt{\frac{x-1}{2}}\right)^{3p+1}-
\left(\sqrt{\frac{x+1}{2}}
-\sqrt{\frac{x-1}{2}}\right)^{3p+1}}{2\sqrt{x^2-1}} .
\end{split}
\end{equation*}
Simplifying the right-hand side modulo $p$, we get
\begin{equation*}
 \frac{1}{2}
\left(\frac{x-1}{2}\right)^{\frac{3p-1}{2}}+
\frac{1}{2}\left(\frac{x+1}{2}\right)^{\frac{3p-1}{2}}+
\frac{3}{2}\left(\frac{x-1}{2}\right)^{\frac{p-1}{2}}
\left(\frac{x+1}{2}\right)^{p}+
\frac{3}{2}\left(\frac{x+1}{2}\right)^{\frac{p-1}{2}}
\left(\frac{x-1}{2}\right)^{p}
\end{equation*}
and therefore,
\begin{equation*}
w_{\lfloor\frac{3p}{4}\rfloor}(2x^2-1)\equiv
\frac{1}{2}\left(\frac{2}{p}\right)
\left(\left(\frac{x-1}{p}\right)(2x+1)+
\left(\frac{x+1}{p}\right)(2x-1)
\right)
\pmod{p},
\end{equation*}
as required. If $x=\pm 1$, then, by (\ref{eq06}), we have $w_{\lfloor\frac{p}{4}\rfloor}(1)=2\lfloor\frac{p}{4}\rfloor+1\equiv (-1)^{(p-1)/2}/2 \pmod{p}$
and $w_{\lfloor\frac{3p}{4}\rfloor}(1)=2\lfloor\frac{3p}{4}\rfloor+1\equiv (-1)^{(p+1)/2}/2$ (mod $p$), which completes the proof of the lemma.
\end{proof}
  From Lemma \ref{L12} and Corollary \ref{C1} we immediately deduce the following result.
\begin{theorem}
Let $p$ be a prime, $p>3$, and let $t\in D_p$. Then
\begin{align*}
\sum_{k=0}^{p-1}C_{2k}\left(\frac{1-t^2}{16}\right)^k&\equiv
\frac{1}{2}\left(\frac{2}{p}\right)\left(\left(\frac{1-t}{p}\right)(1-t)+\left(\frac{1+t}{p}\right)(1+t)\right) \pmod{p}, \\
\sum_{k=0}^{p-1}{4k\choose 2k}t^{2k}&\equiv\frac{1}{2}
\left(\left(\frac{1-4t}{p}\right)(1+2t)+\left(\frac{1+4t}{p}\right)(1-2t)\right) \pmod{p}.
\end{align*}
\end{theorem}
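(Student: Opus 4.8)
The plan is to read off both congruences directly from Corollary~\ref{C1} and Lemma~\ref{L12}, the only creative step being to choose substitutions that force the argument of $w_n$ appearing in Corollary~\ref{C1} into the special shape $2x^2-1$ on which Lemma~\ref{L12} operates. No genuinely new ingredient is needed; everything reduces to elementary algebra in Legendre symbols once the substitutions are made.

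For the first congruence I would apply the first part of Corollary~\ref{C1} with $t$ replaced by $(1-t^2)/16$. The key observation is that
\[
1-32\cdot\frac{1-t^2}{16}=2t^2-1,
\]
so that $w_{\lfloor p/4\rfloor}$ and $w_{\lfloor 3p/4\rfloor}$ are evaluated at $2x^2-1$ with $x=t$, exactly the input required by Lemma~\ref{L12}. Substituting the two expressions from that lemma and abbreviating $A=\left(\frac{1-t}{p}\right)$, $B=\left(\frac{1+t}{p}\right)$, the bracket $3w_{\lfloor p/4\rfloor}(2t^2-1)-w_{\lfloor 3p/4\rfloor}(2t^2-1)$ collapses to $\left(\frac{-2}{p}\right)\bigl(A(1-t)+B(1+t)\bigr)$, the linear-in-$t$ pieces from the two $w$-values partially cancelling. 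The prefactor $\tfrac12\left(\frac{-1}{p}\right)$ then combines with $\left(\frac{-2}{p}\right)$ through $\left(\frac{-1}{p}\right)\left(\frac{-2}{p}\right)=\left(\frac{2}{p}\right)$ to give precisely the claimed right-hand side.

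For the second congruence I would apply the second part of Corollary~\ref{C1} with $t$ replaced by $t^2$, and use that $32t^2-1=2(4t)^2-1$, so Lemma~\ref{L12} applies with $x=4t$. Writing $A=\left(\frac{1-4t}{p}\right)$, $B=\left(\frac{1+4t}{p}\right)$, the combination $w_{\lfloor 3p/4\rfloor}(32t^2-1)+3w_{\lfloor p/4\rfloor}(32t^2-1)$ simplifies to $2\left(\frac{-2}{p}\right)\bigl(A(1+2t)+B(1-2t)\bigr)$; here the coefficient $8t$ arising from $2x=8t$ in Lemma~\ref{L12} reduces to $2t$ once the common numerical factor is pulled out. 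Multiplying by the prefactor $\tfrac14\left(\frac{-2}{p}\right)$ and invoking $\left(\frac{-2}{p}\right)^2=1$ (valid since $p>3$) yields the stated formula.

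I expect no substantive obstacle: the two substitutions $(1-t^2)/16$ and $t^2$, which normalise the argument of $w_n$ to $2x^2-1$, are the whole content, and the rest is bookkeeping. The only point requiring mild care is tracking the Legendre-symbol prefactors so that $\left(\frac{-1}{p}\right)\left(\frac{-2}{p}\right)$ and $\left(\frac{-2}{p}\right)^2$ collapse correctly; with that attended to, both identities follow by a short computation in $A$, $B$, and $t$.
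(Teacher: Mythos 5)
Your proposal is correct and follows exactly the paper's own route: substitute $t\mapsto(1-t^2)/16$ and $t\mapsto t^2$ into the two congruences of Corollary~\ref{C1} so that the arguments of $w_n$ become $2t^2-1$ and $2(4t)^2-1$, then evaluate via Lemma~\ref{L12} with $x=t$ and $x=4t$ respectively. The Legendre-symbol bookkeeping you describe, including the collapses $\left(\frac{-1}{p}\right)\left(\frac{-2}{p}\right)=\left(\frac{2}{p}\right)$ and $\left(\frac{-2}{p}\right)^2=1$, checks out and matches the computation implicit in the paper's proof.
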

\begin{proof}
  From Corollary \ref{C1} we have
$$
\sum_{k=0}^{p-1}C_{2k}\left(\frac{1-t^2}{16}\right)^k\equiv \frac{1}{2}\left(\frac{-1}{p}\right)\bigl(3w_{\lfloor\frac{p}{4}\rfloor}(2t^2-1)-
w_{\lfloor\frac{3p}{4}\rfloor}(2t^2-1)\bigr) \pmod{p}
$$
and
$$
\sum_{k=0}^{p-1}{4k\choose 2k}t^{2k}\equiv \frac{1}{4}\left(\frac{-2}{p}\right)\bigl(3w_{\lfloor\frac{p}{4}\rfloor}(32t^2-1)+
w_{\lfloor\frac{3p}{4}\rfloor}(32t^2-1)\bigr) \pmod{p}.
$$
Now by Lemma \ref{L12} with $x$ replaced by $4t$ for the last congruence, we conclude the proof.
\end{proof}
\begin{theorem}
Let $p$ be a prime, $p>3$, and let $a, b\in {\mathbb Z}$, $ab\not\equiv 0 \pmod{p}$, and $a\not\equiv b \pmod{p}$. Then we have the following congruences modulo $p$:
\begin{align*}
\sum_{k=0}^{p-1}C_{2k}\,\frac{(a-b)^{2k}}{(-64ab)^k}&\equiv
\begin{cases}
\ds
\frac{(ab)^{\frac{p-1}{4}}}{2(a-b)}
\left((3a+b)\left(\frac{b}{p}\right)-(3b+a)\left(\frac{a}{p}\right)
\right),
&  \text{if $p\equiv  1 \pmod{4}$;} \\[5pt]
\ds \frac{(ab)^{\frac{p+1}{4}}}{2(b-a)}\left(\frac{3a+b}{a}
\left(\frac{b}{p}\right)-\frac{3b+a}{b}\left(\frac{a}{p}\right)\right),
  &  \text{if $p\equiv 3  \pmod{4}$,}
\end{cases}
\\[2pt]
\sum_{k=0}^{p-1}
{4k\choose 2k}\frac{(a+b)^{2k}}{(64ab)^k}&\equiv
\begin{cases}
\ds
\frac{\bigl(\frac{2}{p}\bigr)(ab)^{\frac{p-1}{4}}}{4(a-b)}\left(
(3a-b)\left(\frac{b}{p}\right)-(3b-a)\left(\frac{a}{p}\right)\right),
&  \text{if $p\equiv  1 \pmod{4}$;} \\[5pt]
\ds \frac{\bigl(\frac{2}{p}\bigr)(ab)^{\frac{p+1}{4}}}{4(b-a)}\left(
\frac{3a-b}{a}\left(\frac{b}{p}\right)-\frac{3b-a}{b}\left(
\frac{a}{p}\right)\right),  &   \text{if $p\equiv 3 \pmod{4}$.}
\end{cases}
\end{align*}
\end{theorem}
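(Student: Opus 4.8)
The plan is to reduce both congruences to Corollary \ref{C1} by first exhibiting a square-root-free closed form for $w_n$ at the single rational argument $\frac{a^2+b^2}{2ab}$. Concretely, I would establish the auxiliary identity
\[
w_n\Bigl(\frac{a^2+b^2}{2ab}\Bigr)=\frac{a^{2n+1}-b^{2n+1}}{(a-b)(ab)^n},\qquad a\ne b,\ ab\ne 0 .
\]
This is immediate from (\ref{eq06}) (equivalently Lemma \ref{L11}): for $X=\frac{a^2+b^2}{2ab}$ one has $X^2-1=\bigl(\frac{a^2-b^2}{2ab}\bigr)^2$, so the associated root $\alpha=X+\sqrt{X^2-1}=\frac ab$ is already rational; substituting $\alpha=a/b$ into (\ref{eq06}) and clearing the common factor $\alpha-\alpha^{-1}$ collapses the expression to the stated rational function, with every trace of a square root cancelling. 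As a relation in $D_p$ it then holds modulo $p$ whenever $a\not\equiv b$ and $ab\not\equiv 0$.

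Next I would feed this into Corollary \ref{C1}. For the Catalan sum the choice $t=\frac{(a-b)^2}{-64ab}$ gives $1-32t=\frac{a^2+b^2}{2ab}$, so
\[
\sum_{k=0}^{p-1}C_{2k}\frac{(a-b)^{2k}}{(-64ab)^k}\equiv\frac12\Bigl(\frac{-1}{p}\Bigr)\Bigl(3\,w_{\lfloor p/4\rfloor}(\tfrac{a^2+b^2}{2ab})-w_{\lfloor 3p/4\rfloor}(\tfrac{a^2+b^2}{2ab})\Bigr)\pmod p,
\]
while for the central-binomial sum the choice $t=\frac{(a+b)^2}{64ab}$ gives $32t-1=\frac{a^2+b^2}{2ab}$ and the second line of Corollary \ref{C1} yields the combination $3w_{\lfloor p/4\rfloor}+w_{\lfloor 3p/4\rfloor}$ with prefactor $\frac14(\frac{-2}{p})$. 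In both cases I substitute the auxiliary identity for $n=\lfloor p/4\rfloor$ and $n=\lfloor 3p/4\rfloor$ and then apply Euler's criterion $a^{(p-1)/2}\equiv(\frac ap)$, $b^{(p-1)/2}\equiv(\frac bp)$, together with $(ab)^{(p-1)/2}\equiv(\frac ap)(\frac bp)$ and $(\frac{-1}{p})=(-1)^{(p-1)/2}$.

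The computation then splits on $p\bmod 4$, which fixes the exponents: for $p\equiv1$ one has $\lfloor p/4\rfloor=\frac{p-1}4$ and $\lfloor 3p/4\rfloor=\frac{3(p-1)}4$, so $2n+1$ is $\frac{p+1}2$ resp. $\frac{3p-1}2$; for $p\equiv3$ one gets $\frac{p-1}2$ resp. $\frac{3p+1}2$. Reducing $a^{2n+1},b^{2n+1}$ by Euler's criterion turns each numerator into a linear combination of $(\frac ap),(\frac bp)$, and rewriting $1/(ab)^{(p-1)/4}$ as $(ab)^{(p-1)/4}(\frac ap)(\frac bp)$ when $p\equiv1$, resp. $1/(ab)^{(p-3)/4}$ as $(ab)^{(p+1)/4}(\frac ap)(\frac bp)$ when $p\equiv3$, both moves the power of $ab$ into the numerator and interchanges the two Legendre factors on the linear coefficients. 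Collecting terms produces the coefficients $3a+b,\ 3b+a$ (Catalan) and $3a-b,\ 3b-a$ (binomial), with $(\frac{-1}{p})$ accounting for the switch from $(a-b)$ to $(b-a)$ and $(\frac{-2}{p})=(\frac2p)(-1)^{(p-1)/2}$ producing the $(\frac2p)$ prefactor in the second pair, exactly as in the four cases of the statement.

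The only real work, and the place to be careful, is the bookkeeping in this last step: one must track the correct power of $ab$ and the correct Legendre factor simultaneously, since it is precisely $(ab)^{(p-1)/2}\equiv(\frac ap)(\frac bp)$ that both relocates $(ab)^{(p\mp1)/4}$ and swaps $(\frac ap)\leftrightarrow(\frac bp)$ on the linear factors. I expect no conceptual obstacle beyond this; the closed form for $w_n(\frac{a^2+b^2}{2ab})$ is what lets the two residue classes be handled by one uniform substitution.
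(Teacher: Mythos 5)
Your proposal is correct and follows essentially the same route as the paper: the closed form $w_n\bigl(\tfrac{a^2+b^2}{2ab}\bigr)=\tfrac{a^{2n+1}-b^{2n+1}}{(a-b)(ab)^n}$ is exactly the paper's identity $w_n\bigl(\tfrac{a^2+b^2}{2ab}\bigr)=\tfrac{a(a/b)^n-b(b/a)^n}{a-b}$ obtained from (\ref{eq06}), and the paper likewise substitutes it into Corollary \ref{C1} with the same choices of $t$ and reduces via Euler's criterion in the two cases $p\equiv 1,3\pmod 4$. The bookkeeping you flag (relocating $(ab)^{(p\mp 1)/4}$ while swapping $\bigl(\tfrac{a}{p}\bigr)\leftrightarrow\bigl(\tfrac{b}{p}\bigr)$ via $(ab)^{(p-1)/2}\equiv\bigl(\tfrac{ab}{p}\bigr)$) is precisely how the paper's computation goes through.
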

\begin{proof}
By Corollary \ref{C1}, we have
\begin{align}
\sum_{k=0}^{p-1}C_{2k}\,\frac{(a-b)^{2k}}{(-64ab)^k}&\equiv
\frac{1}{2}\left(\frac{-1}{p}\right)\left(3w_{\lfloor\frac{p}{4}\rfloor}\left(
\frac{a^2+b^2}{2ab}\right)-w_{\lfloor\frac{3p}{4}\rfloor}\left(\frac{a^2+b^2}{2ab}\right)\right) \pmod{p}, \label{C2k} \\
\sum_{k=0}^{p-1}{4k\choose 2k}\frac{(a+b)^{2k}}{(64ab)^k}&\equiv
\frac{1}{4}\left(\frac{-2}{p}\right)\left(3w_{\lfloor\frac{p}{4}\rfloor}\left(
\frac{a^2+b^2}{2ab}\right)+w_{\lfloor\frac{3p}{4}\rfloor}\left(\frac{a^2+b^2}{2ab}\right)\right) \pmod{p}. \label{4k2k}
\end{align}
  From (\ref{eq06}) we obtain
$$
w_n\left(\frac{a^2+b^2}{2ab}\right)=\frac{a(a/b)^n-b(b/a)^n}{a-b}.
$$
If $p\equiv 1$ (mod $4$), then we have modulo $p$,
\begin{align*}
w_{\lfloor\frac{p}{4}\rfloor}\left(\frac{a^2+b^2}{2ab}\right)&=
w_{\frac{p-1}{4}}\left(\frac{a^2+b^2}{2ab}\right)=
\frac{a(a/b)^{\frac{p-1}{4}}-b(b/a)^{\frac{p-1}{4}}}{a-b}\equiv
\frac{a\bigl(\frac{b}{p}\bigr)-b\bigl(\frac{a}{p}\bigr)}{a-b}\,(ab)^{\frac{p-1}{4}},
 \\
w_{\lfloor\frac{3p}{4}\rfloor}\left(\frac{a^2+b^2}{2ab}\right)&=
\frac{a(a/b)^{\frac{3(p-1)}{4}}-b(b/a)^{\frac{3(p-1)}{4}}}{a-b}\equiv
\frac{a\bigl(\frac{a}{p}\bigr)-b\bigl(\frac{b}{p}\bigr)}{a-b}\,(ab)^{\frac{p-1}{4}}.
\end{align*}
If $p\equiv 3$ (mod $4$), then
\begin{align*}
w_{\lfloor\frac{p}{4}\rfloor}\left(\frac{a^2+b^2}{2ab}\right)&=
w_{\frac{p-3}{4}}\left(\frac{a^2+b^2}{2ab}\right)=
\frac{a(a/b)^{\frac{p-3}{4}}-b(b/a)^{\frac{p-3}{4}}}{a-b}\equiv
\frac{\bigl(\frac{b}{p}\bigr)-\bigl(\frac{a}{p}\bigr)}{a-b}\,(ab)^{\frac{p+1}{4}},
 \\
w_{\lfloor\frac{3p}{4}\rfloor}\left(\frac{a^2+b^2}{2ab}\right)&=
\frac{a(a/b)^{\frac{3p-1}{4}}-b(b/a)^{\frac{3p-1}{4}}}{a-b}\equiv
\frac{\frac{a}{b}\bigl(\frac{a}{p}\bigr)-\frac{b}{a}\bigl(\frac{b}{p}\bigr)}{a-b}\,(ab)^{\frac{p+1}{4}}.
\end{align*}
Now substituting the above congruences in (\ref{C2k}) and (\ref{4k2k}), we conclude the proof.
\end{proof}

\section{Congruences involving second-order  Catalan numbers}
\label{Section4}

In this section, we will deal with a particular case of Theorem \ref{TM} when $m=3$. This case leads to congruences containing
second-order Catalan numbers $C_n^{(2)}$ (sequence \seqnum{A001764} in the OEIS \cite{OEIS})  and binomial
coefficients~${3n\choose n}$ (sequence \seqnum{A005809}).
\begin{theorem} \label{T2}
Let $p$ be a prime greater than $3$, and let $t\in D_p$. Then
\begin{align}
\sum_{k=0}^{\lfloor p/3\rfloor}C_k^{(2)}t^k&\equiv 3\left(\frac{p}{3}\right) w_{\lfloor\frac{p}{3}\rfloor}(1-27t/2) \pmod{p}, \label{eq13}\\
\sum_{k=(p-1)/2}^{\lfloor 2p/3\rfloor}C_k^{(2)}t^k&\equiv -\left(\frac{p}{3}\right)\Bigl(w_{\lfloor\frac{2p}{3}\rfloor}(1-27t/2)+w_{\lfloor\frac{p}{3}\rfloor}(1-27t/2)\Bigr) \pmod{p}, \nonumber \\
\sum_{k=0}^{\lfloor p/3\rfloor}{3k\choose k} t^k&\equiv \left(\frac{p}{3}\right) w_{\lfloor\frac{p}{3}\rfloor}(27t/2-1) \pmod{p}, \label{eq15}\\
\sum_{k=(p+1)/2}^{\lfloor 2p/3\rfloor}{3k\choose k} t^k&\equiv \frac{1}{3}\left(\frac{p}{3}\right)\Bigl(w_{\lfloor\frac{2p}{3}\rfloor}(27t/2-1)-w_{\lfloor\frac{p}{3}\rfloor}(27t/2-1)\Bigr) \pmod{p}. \label{eq16}
\end{align}
\end{theorem}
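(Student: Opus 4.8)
The plan is to specialize Theorem~\ref{TM} to the case $m=3$ (so that $\tfrac1m=\tfrac13$ and $\tfrac{m-1}{m}=\tfrac23$) and then convert the four resulting congruences on normalized Pochhammer sums into congruences on $\sum C_k^{(2)}t^k$ and $\sum\binom{3k}{k}t^k$, exactly paralleling the passage from Theorem~\ref{TM} to Theorem~\ref{T1} carried out in the $m=4$ case.

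First I would record the two elementary identities that make this translation possible. The Gauss triplication formula gives $\left(\tfrac13\right)_k\left(\tfrac23\right)_k(1)_k=(3k)!/27^k$, whence $\left(\tfrac13\right)_k\left(\tfrac23\right)_k=(3k)!/(27^k\,k!)$, and therefore
\[
C_k^{(2)}=\frac{(3k)!}{k!\,(2k+1)!}=\frac{\left(\tfrac13\right)_k\left(\tfrac23\right)_k}{(2k+1)!}\,27^k,
\qquad
\binom{3k}{k}=\frac{(3k)!}{k!\,(2k)!}=\frac{\left(\tfrac13\right)_k\left(\tfrac23\right)_k}{(2k)!}\,27^k.
\]
Consequently, replacing $t$ by $27t$ in the four congruences of Theorem~\ref{TM} turns the four Pochhammer sums over the ranges $0\le k\le\lfloor p/3\rfloor$ and $(p\mp1)/2\le k\le\lfloor 2p/3\rfloor$ precisely into the corresponding sums of $C_k^{(2)}t^k$ and $\binom{3k}{k}t^k$, while the arguments $1-t/2$ and $t/2-1$ of $w$ become $1-27t/2$ and $27t/2-1$.

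It then remains only to simplify the scalar prefactors modulo $p$. Writing $p=3l+r$ with $l=\lfloor p/3\rfloor$ and $r\in\{1,2\}$, one has $1+2l=(2p+3-2r)/3\equiv(3-2r)/3\pmod p$, i.e.\ $\equiv\tfrac13$ if $r=1$ and $\equiv-\tfrac13$ if $r=2$; since $\left(\tfrac p3\right)=1$ for $r=1$ and $=-1$ for $r=2$, this gives $\frac{1}{1+2\lfloor p/3\rfloor}\equiv 3\left(\tfrac p3\right)\pmod p$, which produces the prefactor in (\ref{eq13}) and, through $\frac{-1}{3(1+2l)}\equiv-\left(\tfrac p3\right)$, the prefactor in the second congruence. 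For the two even-denominator sums I would use that $(-1)^{\lfloor p/3\rfloor}=\left(\tfrac p3\right)$: checking $p\equiv1$ and $p\equiv5\pmod 6$ separately shows $\lfloor p/3\rfloor$ is even exactly when $p\equiv1\pmod3$, matching $\left(\tfrac p3\right)$. Substituting this into (\ref{eq10}) and into the fourth congruence of Theorem~\ref{TM} yields (\ref{eq15}) and (\ref{eq16}).

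There is no substantive obstacle: the theorem is a direct specialization of Theorem~\ref{TM} once the combinatorial identities above are in place. The only mildly delicate point is the equality $(-1)^{\lfloor p/3\rfloor}=\left(\tfrac p3\right)$, which---unlike the congruence for $1+2\lfloor p/3\rfloor$---genuinely depends on $p$ modulo $6$ rather than modulo $3$, and so must be verified by the short residue-class split rather than read off from $p\bmod 3$ alone.
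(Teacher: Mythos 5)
Your proposal is correct and is exactly the argument the paper intends: the paper states Theorem~\ref{T2} without proof as the $m=3$ specialization of Theorem~\ref{TM}, in direct parallel to the explicitly written $m=4$ proof of Theorem~\ref{T1}, and your Pochhammer identities $C_k^{(2)}=\frac{(1/3)_k(2/3)_k}{(2k+1)!}\,27^k$, $\binom{3k}{k}=\frac{(1/3)_k(2/3)_k}{(2k)!}\,27^k$ together with the prefactor evaluations $\frac{1}{1+2\lfloor p/3\rfloor}\equiv 3\left(\frac{p}{3}\right)\pmod p$ and $(-1)^{\lfloor p/3\rfloor}=\left(\frac{p}{3}\right)$ are precisely the steps that proof supplies in the $m=4$ case (there via $C_{2k}$ and the factor $2(-1)^{(p-1)/2}$, $(\frac{-2}{p})$).
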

\begin{corollary} \label{C2}
Let $p$ be a prime greater than $3$, and let $t\in D_p$. Then
\begin{align*}
\sum_{k=0}^{p-1}C_k^{(2)}t^k&\equiv \left(\frac{p}{3}\right)\Bigl(2w_{\lfloor\frac{p}{3}\rfloor}(1-27t/2)-w_{\lfloor\frac{2p}{3}\rfloor}(1-27t/2)\Bigr) \pmod{p}, \\
\sum_{k=0}^{p-1}{3k\choose k} t^k&\equiv \frac{1}{3}\left(\frac{p}{3}\right)\Bigl(2w_{\lfloor\frac{p}{3}\rfloor}(27t/2-1)+w_{\lfloor\frac{2p}{3}\rfloor}(27t/2-1)\Bigr) \pmod{p}.
\end{align*}
\end{corollary}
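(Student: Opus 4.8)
The plan is to obtain Corollary~\ref{C2} by summing the two ``active'' ranges of Theorem~\ref{T2} after discarding the terms of $\sum_{k=0}^{p-1}$ that are divisible by~$p$; this is exactly the $m=3$ instance of the reduction already carried out in the proof of Corollary~\ref{CM}. First I would record the dictionary that identifies the sums of Theorem~\ref{T2} with the Pochhammer sums of Theorem~\ref{TM}: the Gauss multiplication formula gives $\left(\frac13\right)_k\left(\frac23\right)_k=(3k)!/(27^k\,k!)$, whence
$$
C_k^{(2)}=27^k\,\frac{\left(\frac13\right)_k\left(\frac23\right)_k}{(2k+1)!},\qquad
\binom{3k}{k}=27^k\,\frac{\left(\frac13\right)_k\left(\frac23\right)_k}{(2k)!},
$$
so that the substitution $t\mapsto 27t$ converts the $m=3$ sums of Theorem~\ref{TM} into $\sum C_k^{(2)}t^k$ and $\sum\binom{3k}{k}t^k$.

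Next I would prove the two splittings
$$
\sum_{k=0}^{p-1}C_k^{(2)}t^k\equiv\sum_{k=0}^{\lfloor p/3\rfloor}C_k^{(2)}t^k+\sum_{k=(p-1)/2}^{\lfloor 2p/3\rfloor}C_k^{(2)}t^k\pmod p,
$$
together with the analogue for $\binom{3k}{k}$ in which the second sum starts at $k=(p+1)/2$. Both are the $m=3$ case of the valuation count in the proof of Corollary~\ref{CM}: for $\lfloor p/3\rfloor<k\le(p-3)/2$ the factorial $(2k+1)!$ is prime to~$p$ while one of $\left(\frac13\right)_k$, $\left(\frac23\right)_k$ already carries a factor of~$p$, so the coefficient is $\equiv 0\pmod p$; for $\lfloor 2p/3\rfloor<k\le p-1$ the single factor of~$p$ in $(2k+1)!$ (resp.\ $(2k)!$) is outweighed by the two factors of~$p$ supplied by $\left(\frac13\right)_k\left(\frac23\right)_k$, so those terms vanish as well.

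With the splittings in place I would substitute Theorem~\ref{T2}. Adding (\ref{eq13}) to the second congruence of Theorem~\ref{T2} and collecting the $w_{\lfloor p/3\rfloor}$ terms (coefficient $3-1=2$) gives
$$
\sum_{k=0}^{p-1}C_k^{(2)}t^k\equiv\left(\tfrac{p}{3}\right)\Bigl(2\,w_{\lfloor p/3\rfloor}(1-27t/2)-w_{\lfloor 2p/3\rfloor}(1-27t/2)\Bigr)\pmod p,
$$
while adding (\ref{eq15}) and (\ref{eq16}) and performing the same cancellation (the factor $\tfrac13$ surviving from (\ref{eq16})) yields the stated $\binom{3k}{k}$ congruence. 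Equivalently, the whole corollary is Corollary~\ref{CM} at $m=3$ once the two constant simplifications $\frac{1}{3(1+2\lfloor p/3\rfloor)}\equiv\left(\frac p3\right)$ and $(-1)^{\lfloor p/3\rfloor}=\left(\frac p3\right)$ are verified for primes $p>3$.

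I expect no real obstacle here: the content is bookkeeping, and the only non-mechanical input---the vanishing of the discarded coefficients---is already established in the proof of Corollary~\ref{CM}. The single point that rewards a moment's attention is the sign identity $(-1)^{\lfloor p/3\rfloor}=\left(\frac p3\right)$: it holds not because of $p\bmod 3$ alone but because $p$ is odd, which forces $\lfloor p/3\rfloor$ to be even precisely when $p\equiv 1\pmod 3$.
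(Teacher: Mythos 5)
Your proposal is correct and is exactly the paper's route: the paper states Corollary \ref{C2} without separate proof precisely because it is Corollary \ref{CM} at $m=3$ (equivalently, the sum of the congruences of Theorem \ref{T2}), using the dictionary $C_k^{(2)}=27^k\bigl(\tfrac13\bigr)_k\bigl(\tfrac23\bigr)_k/(2k+1)!$, $\binom{3k}{k}=27^k\bigl(\tfrac13\bigr)_k\bigl(\tfrac23\bigr)_k/(2k)!$ and the constant identifications $\frac{1}{3(1+2\lfloor p/3\rfloor)}\equiv\bigl(\frac p3\bigr)$, $(-1)^{\lfloor p/3\rfloor}=\bigl(\frac p3\bigr)$. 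Your valuation argument for discarding the gap terms and your observation that the sign identity uses the oddness of $p$ are both accurate and match the proof of Corollary \ref{CM}.
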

Using the exact values of $w_n$ from (\ref{w-101}) and (\ref{12-1}),
we immediately get numerical congruences at the points $t=4/27, 2/27,
1/27, 1/9$.
\begin{corollary}
Let $p$ be a prime greater than $3$. Then
\begin{align}
\sum_{k=0}^{p-1}C_{k}^{(2)}\left(\frac{4}{27}\right)^k&\equiv 1,
\qquad\quad
\sum_{k=0}^{\lfloor p/3\rfloor}C_{k}^{(2)}\left(\frac{4}{27}\right)^k\equiv 3
\pmod{p}, \nonumber \\
\sum_{k=0}^{p-1}{3k\choose k}\left(\frac{4}{27}\right)^k&\equiv
\frac{1}{9}, \qquad\quad
\sum_{k=0}^{\lfloor p/3\rfloor}{3k\choose k}\left(\frac{4}{27}\right)^k\equiv
\frac{1}{3} \pmod{p}, \label{427}
\end{align}
\begin{align}
\sum_{k=0}^{p-1}C_{k}^{(2)}\left(\frac{2}{27}\right)^k&\equiv
2\left(\frac{3}{p}\right)-1,
\qquad\quad
\sum_{k=0}^{\lfloor p/3\rfloor}C_{k}^{(2)}\left(\frac{2}{27}\right)^k\equiv
3\left(\frac{3}{p}\right)
\pmod{p}, \nonumber \\
\sum_{k=0}^{p-1}{3k\choose k}\left(\frac{2}{27}\right)^k&\equiv
\frac{2}{3}\left(\frac{3}{p}\right)+\frac{1}{3}, \qquad\quad
\sum_{k=0}^{\lfloor p/3\rfloor}{3k\choose k}\left(\frac{2}{27}\right)^k\equiv
\left(\frac{3}{p}\right) \pmod{p}. \label{227}
\end{align}
\end{corollary}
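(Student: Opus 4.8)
The plan is to specialize Theorem~\ref{T2} and Corollary~\ref{C2} at the two values $t=4/27$ and $t=2/27$, which are engineered precisely so that the argument $27t/2-1$ of the sequence $w_n$ collapses onto one of the points $-1,0,1$ where $w_n$ is given in closed form by (\ref{w-101}). Indeed, at $t=4/27$ we have $27t/2=2$, so $1-27t/2=-1$ and $27t/2-1=1$; at $t=2/27$ we have $27t/2=1$, so both $1-27t/2$ and $27t/2-1$ equal $0$. Substituting $w_n(-1)=(-1)^n$, $w_n(1)=2n+1$, and $w_n(0)=(-1)^{\lfloor n/2\rfloor}$ into (\ref{eq13}), (\ref{eq15}) and into the two formulas of Corollary~\ref{C2} reduces each of the eight assertions to an elementary identity in $\lfloor p/3\rfloor$ and the Legendre symbols, so no further analytic input is needed.

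For the $t=4/27$ congruences I would first record the parity fact that, writing $p=3l+r$ with $r\in\{1,2\}$, the integers $l$ and $r$ have opposite parities (since $3l=p-r$ with $p$ odd), so $l$ is even exactly when $r=1$ and odd exactly when $r=2$. As $\left(\frac{p}{3}\right)=1$ for $r=1$ and $-1$ for $r=2$, this gives the clean identity $\left(\frac{p}{3}\right)(-1)^{\lfloor p/3\rfloor}=1$, which turns (\ref{eq13}) directly into the value $3$. Combining it with $\lfloor 2p/3\rfloor=p-1-\lfloor p/3\rfloor$, whence $(-1)^{\lfloor 2p/3\rfloor}=(-1)^{\lfloor p/3\rfloor}$, evaluates the full sum to $2-1=1$. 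The two ${3k\choose k}$-congruences run in parallel once one checks, again from $p=3l+r$, that $\left(\frac{p}{3}\right)(2l+1)\equiv 1/3\pmod p$ and that $w_{\lfloor 2p/3\rfloor}(1)=2\lfloor 2p/3\rfloor+1\equiv-(2l+1)$; the combinations in (\ref{eq15}) and Corollary~\ref{C2} then collapse to $1/3$ and $1/9$.

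The $t=2/27$ case is where the real bookkeeping lies, and I expect it to be the main obstacle. Here the argument is $0$, so I must control $(-1)^{\lfloor\lfloor p/3\rfloor/2\rfloor}$ and convert $\left(\frac{p}{3}\right)$ into $\left(\frac{3}{p}\right)$ through quadratic reciprocity, $\left(\frac{p}{3}\right)=(-1)^{(p-1)/2}\left(\frac{3}{p}\right)$. The crux is the parity claim
$$
\frac{p-1}{2}+\left\lfloor\frac{\lfloor p/3\rfloor}{2}\right\rfloor\equiv 0\pmod 2,
$$
which I would verify by reducing $p$ modulo $6$: for $p\equiv 1\pmod 6$ write $p=6s+1$, $\lfloor p/3\rfloor=2s$, and for $p\equiv 5\pmod 6$ write $p=6s+5$, $\lfloor p/3\rfloor=2s+1$; in both cases the exponent is a multiple of $2$. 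This yields $\left(\frac{p}{3}\right)(-1)^{\lfloor\lfloor p/3\rfloor/2\rfloor}=\left(\frac{3}{p}\right)$, so (\ref{eq13}) and (\ref{eq15}) produce $3\left(\frac{3}{p}\right)$ and $\left(\frac{3}{p}\right)$. A parallel reduction mod $6$ applied to $m=\lfloor 2p/3\rfloor=p-1-\lfloor p/3\rfloor$ shows $\left(\frac{p}{3}\right)(-1)^{\lfloor m/2\rfloor}=1$, and substituting both evaluations into Corollary~\ref{C2} gives the full-sum values $2\left(\frac{3}{p}\right)-1$ and $\frac{2}{3}\left(\frac{3}{p}\right)+\frac{1}{3}$. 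The only delicate point throughout is keeping the several interlocking parities consistent — those of $l$, of $(p-1)/2$, and of $\lfloor\lfloor p/3\rfloor/2\rfloor$ — which is most cleanly organized by working case by case according to the residue of $p$ modulo $6$.
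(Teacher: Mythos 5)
Your proposal is correct and is exactly the paper's intended argument: the paper proves this corollary by substituting the closed-form values $w_n(-1)=(-1)^n$, $w_n(0)=(-1)^{\lfloor n/2\rfloor}$, $w_n(1)=2n+1$ from (\ref{w-101}) into (\ref{eq13}), (\ref{eq15}) and Corollary~\ref{C2} at $t=4/27$ and $t=2/27$, leaving precisely the parity and Legendre-symbol bookkeeping you carried out. All of your verifications (in particular $\bigl(\tfrac{p}{3}\bigr)(-1)^{\lfloor p/3\rfloor}=1$, $\bigl(\tfrac{p}{3}\bigr)(2\lfloor p/3\rfloor+1)\equiv 1/3 \pmod{p}$, and the two parity identities handled by reduction of $p$ modulo $6$) are accurate, so your write-up simply makes explicit what the paper dismisses as immediate.
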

\begin{remark}
 Z.\ W.\ Sun \cite[Thm.\ 3.1]{ZWS09} gave another proof of the first congruence in (\ref{427})
based on third-order recurrences. Z.\ H.\ Sun \cite[Rem.\ 3.1]{ZHS14} proved the second
congruence in (\ref{427}) as well as the second congruence in (\ref{227})
with the help of Lucas sequences.
\end{remark}
\begin{corollary}
Let $p$ be a prime, $p>3$. Then
\begin{align*}
\sum_{k=0}^{\lfloor p/3\rfloor}\frac{C_k^{(2)}}{27^k}&\equiv
\begin{cases}
\ds
-6 \pmod{p},
&   \text{if $p\equiv \pm 4 \pmod{9}$;} \\
\ds 3 \pmod{p}, &   \text{otherwise,}
\end{cases}
\\[3pt]
\sum_{k=0}^{p-1}\frac{C_k^{(2)}}{27^k}&
\equiv
\begin{cases}
\ds
1 \pmod{p},
&  \text{if $p\equiv \pm 1 \pmod{9}$;} \\
\ds 4 \pmod{p}, &   \text{if $p\equiv \pm 2 \pmod{9}$;} \\
\ds -5 \pmod{p}, &  \text{if $p\equiv \pm 4 \pmod{9}$,}
\end{cases}
\\[3pt]
\sum_{k=0}^{p-1}{3k\choose k}\frac{1}{27^k}&
\equiv
\begin{cases}
\ds
1 \pmod{p},
&  \text{if $p\equiv \pm 1 \pmod{9}$;} \\
\ds -2/3 \pmod{p}, &   \text{if $p\equiv \pm 2 \pmod{9}$;} \\
\ds -1/3 \pmod{p}, &   \text{if $p\equiv \pm 4 \pmod{9}$.}
\end{cases}
\end{align*}
\end{corollary}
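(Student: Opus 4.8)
The plan is to specialize Theorem~\ref{T2} (for the half-range sum) and Corollary~\ref{C2} (for the two full sums) at the single value $t=1/27$, and then to read off the needed Jacobi-polynomial values $w_n$ at the two points $x=\pm1/2$ furnished by (\ref{12-1}). Since $1-27t/2=1/2$ when $t=1/27$, congruence (\ref{eq13}) gives $\sum_{k=0}^{\lfloor p/3\rfloor}C_k^{(2)}/27^k\equiv 3\left(\frac{p}{3}\right)w_{\lfloor p/3\rfloor}(1/2)\pmod p$, while the two congruences of Corollary~\ref{C2} become $\sum_{k=0}^{p-1}C_k^{(2)}/27^k\equiv\left(\frac{p}{3}\right)\bigl(2w_{\lfloor p/3\rfloor}(1/2)-w_{\lfloor 2p/3\rfloor}(1/2)\bigr)$ and $\sum_{k=0}^{p-1}{3k\choose k}/27^k\equiv\frac13\left(\frac{p}{3}\right)\bigl(2w_{\lfloor p/3\rfloor}(-1/2)+w_{\lfloor 2p/3\rfloor}(-1/2)\bigr)$ modulo $p$, because $27t/2-1=-1/2$. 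So the entire statement reduces to evaluating $w_n(1/2)$ and $w_n(-1/2)$ at $n=\lfloor p/3\rfloor$ and $n=\lfloor 2p/3\rfloor$.

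Next I would record, directly from the closed forms in (\ref{12-1}), that $w_n(1/2)=2\cos\bigl(\pi(n-1)/3\bigr)$ is $6$-periodic in $n$, equal to $2,1,-1,-2,-1,1$ according as $n\equiv1,2,3,4,5,0\pmod6$, and that $w_n(-1/2)=\frac{2}{\sqrt3}\sin\bigl(\pi(2n+1)/3\bigr)$ is $3$-periodic, equal to $1,0,-1$ according as $n\equiv0,1,2\pmod3$. Both are ordinary integers, so these are genuine equalities rather than congruences, which is what lets the final answers be honest integers modulo $p$.

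The remaining work is arithmetic bookkeeping on the floors. Using $\lfloor 2p/3\rfloor=p-1-\lfloor p/3\rfloor$, I would split $p$ into the six residue classes $1,5,7,11,13,17\pmod{18}$ coprime to $6$; each class pins down $\lfloor p/3\rfloor\bmod6$ and $\lfloor 2p/3\rfloor\bmod6$ exactly (for example $p=18j+5$ gives $\lfloor p/3\rfloor=6j+1$ and $\lfloor 2p/3\rfloor=12j+3$). Substituting these into the periodic tables of the previous paragraph, together with $\left(\frac{p}{3}\right)=+1$ for $p\equiv1\pmod3$ and $\left(\frac{p}{3}\right)=-1$ for $p\equiv2\pmod3$, turns each of the three right-hand sides into an explicit integer.

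The one genuinely delicate point—and really the content of the statement—is that although $w_{\lfloor p/3\rfloor}(\pm1/2)$ and $w_{\lfloor 2p/3\rfloor}(\pm1/2)$ individually depend on $p\bmod18$, all three combinations depend only on $p\bmod9$. I would establish this by completing the six-line case analysis over $p\bmod18$ and checking that the two classes mapping onto each residue $\pm1,\pm2,\pm4\pmod9$ produce the same value: the extra factor $\left(\frac{p}{3}\right)$, which switches sign with $p\bmod3$, precisely cancels the discrepancy in $w_n$ between the two paired classes. Verifying this collapse across all three sums reproduces the stated case distinctions ($-6$/$3$ in the first sum; $1$/$4$/$-5$ in the second; $1$/$-2/3$/$-1/3$ in the third) and completes the proof.
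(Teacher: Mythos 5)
Your proposal is correct and is exactly the paper's (unwritten) proof: the paper simply states that the corollary follows by taking $t=1/27$ in Theorem \ref{T2} and Corollary \ref{C2} and using the values $w_n(\pm 1/2)$ from (\ref{12-1}), which is precisely your specialization, and your six-class analysis modulo $18$ (with the Legendre symbol $(\frac{p}{3})$ cancelling the discrepancy so that only $p \bmod 9$ matters) is the bookkeeping the paper leaves to the reader. All of your tabulated values of $w_n(1/2)$ and $w_n(-1/2)$ and the resulting case values check out against the stated congruences.
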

\begin{corollary}
Let $p$ be a prime, $p>3$. Then
\begin{align}
\sum_{k=0}^{p-1}\frac{C_k^{(2)}}{9^k}&\equiv
\begin{cases}
\ds
-2 \pmod{p},
&  \text{if $p\equiv \pm 2 \pmod{9}$;} \\
\ds 1 \pmod{p}, &   \text{otherwise,}
\end{cases}
\label{C9}\\[3pt]
\sum_{k=0}^{\lfloor p/3\rfloor}\frac{C_k^{(2)}}{9^k}&
\equiv
\begin{cases}
\ds
3 \pmod{p},
&  \text{if $p\equiv \pm 1 \pmod{9}$;} \\
\ds -3 \pmod{p}, &   \text{if $p\equiv \pm 2 \pmod{9}$;} \\
\ds 0 \pmod{p}, &   \text{if $p\equiv \pm 4 \pmod{9}$,}
\end{cases}
\nonumber \\[3pt]
\sum_{k=0}^{p-1}{3k\choose k}\frac{1}{9^k}&
\equiv
\begin{cases}
\ds
1 \pmod{p},
&  \text{if $p\equiv \pm 1 \pmod{9}$;} \\
\ds 0 \pmod{p}, &   \text{if $p\equiv \pm 2 \pmod{9}$;} \\
\ds -1 \pmod{p}, &   \text{if $p\equiv \pm 4 \pmod{9}$.}
\end{cases}
\label{319}
\end{align}
\end{corollary}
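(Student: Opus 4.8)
The plan is to specialize the general formulas of Theorem~\ref{T2} and Corollary~\ref{C2} to the single point $t=1/9$ and then read off the needed values of $w_{\lfloor p/3\rfloor}$ and $w_{\lfloor 2p/3\rfloor}$ from the trigonometric closed forms. At $t=1/9$ one has $27t/2=3/2$, so the arguments $1-27t/2$ and $27t/2-1$ collapse to $-1/2$ and $1/2$ respectively. Hence (\ref{eq13}) gives
$$\sum_{k=0}^{\lfloor p/3\rfloor}\frac{C_k^{(2)}}{9^k}\equiv 3\Bigl(\frac{p}{3}\Bigr)w_{\lfloor\frac{p}{3}\rfloor}(-1/2)\pmod p,$$
while Corollary~\ref{C2} gives
$$\sum_{k=0}^{p-1}\frac{C_k^{(2)}}{9^k}\equiv\Bigl(\frac{p}{3}\Bigr)\bigl(2w_{\lfloor\frac{p}{3}\rfloor}(-1/2)-w_{\lfloor\frac{2p}{3}\rfloor}(-1/2)\bigr),\qquad \sum_{k=0}^{p-1}\frac{{3k\choose k}}{9^k}\equiv\frac{1}{3}\Bigl(\frac{p}{3}\Bigr)\bigl(2w_{\lfloor\frac{p}{3}\rfloor}(1/2)+w_{\lfloor\frac{2p}{3}\rfloor}(1/2)\bigr)\pmod p.$$
So everything reduces to evaluating $w_n(\pm1/2)$ at $n=\lfloor p/3\rfloor$ and $n=\lfloor 2p/3\rfloor$.

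For those evaluations I would use the closed forms (\ref{12-1}): $w_n(1/2)=2\cos(\pi(n-1)/3)$, which is periodic in $n$ with period $6$, and $w_n(-1/2)=\frac{2}{\sqrt3}\sin(\pi(2n+1)/3)$, which is periodic with period $3$. Concretely $w_n(-1/2)$ equals $1,0,-1$ according as $n\equiv 0,1,2\pmod 3$, and $w_n(1/2)$ equals $1,2,1,-1,-2,-1$ according as $n\equiv 0,1,2,3,4,5\pmod 6$. It then remains only to pin down $\lfloor p/3\rfloor$ and $\lfloor 2p/3\rfloor$ modulo $3$ and modulo $6$.

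The bookkeeping step is to write $p=9j+s$ with $s\in\{1,2,4,5,7,8\}$, so that $(\tfrac{p}{3})=+1$ for $s\in\{1,4,7\}$ and $-1$ for $s\in\{2,5,8\}$, and $\lfloor p/3\rfloor=3j+\lfloor s/3\rfloor$, $\lfloor 2p/3\rfloor=6j+\lfloor 2s/3\rfloor$. For the $-1/2$ evaluations only residues modulo $3$ matter, and since $3j\equiv 0$ and $6j\equiv 0\pmod 3$ these depend on $s$ alone. The key subtlety occurs for the $1/2$ evaluations, where period $6$ forces us to know $\lfloor p/3\rfloor=3j+\lfloor s/3\rfloor$ modulo $6$, i.e.\ the parity of $j$; here one uses that $p$ is odd, which forces $j\equiv s+1\pmod 2$, so the parity of $j$ is determined by $s$ and hence by $p\bmod 9$. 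This is exactly why the binomial congruence depends only on $p\bmod 9$ and not on $p\bmod 18$, despite the period-$6$ behaviour of $w_n(1/2)$; meanwhile $\lfloor 2p/3\rfloor=6j+\lfloor 2s/3\rfloor\equiv\lfloor 2s/3\rfloor\pmod 6$ automatically.

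With these reductions in hand I would tabulate the six cases $s\in\{1,2,4,5,7,8\}$, substitute the Legendre symbol and the corresponding $w$-values into the three displayed congruences, and check that the classes $s$ and $9-s$ collapse into the stated $\pm$ pairs $p\equiv\pm1,\pm2,\pm4\pmod9$. The main obstacle is precisely this floor/parity accounting — in particular the observation that oddness of $p$ fixes the parity of $j$ — after which the remaining verification is a finite and purely mechanical case check.
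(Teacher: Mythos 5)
Your proposal is correct and follows essentially the same route as the paper: the paper likewise specializes Theorem \ref{T2} and Corollary \ref{C2} at $t=1/9$ (so the arguments become $\mp 1/2$) and reads off $w_{\lfloor p/3\rfloor}(\pm 1/2)$, $w_{\lfloor 2p/3\rfloor}(\pm 1/2)$ from the closed forms (\ref{12-1}), the case analysis modulo $9$ being left as the ``immediate'' mechanical step. Your explicit treatment of the period-$6$ behaviour of $w_n(1/2)$ and the parity of $j$ forced by the oddness of $p$ is exactly the bookkeeping the paper suppresses, and your six-case table reproduces the stated congruences.
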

\begin{remark}
 Z.\ W.\ Sun \cite[Thm.\ 1.5]{ZWS09} provided another proof of congruences (\ref{C9}) and  (\ref{319})
  by using cubic residues and third-order recurrences.
\end{remark}
\begin{lemma} \label{L4.1}
For any $x\ne 1, -1/2$, we have
$$
w_n(4x^3-3x)=
\frac{\alpha^{3n+2}-\alpha^{-3n-1}}{\alpha^2-\alpha^{-1}}, \qquad
\text{where} \quad \alpha=x+\sqrt{x^2-1}.
$$
\end{lemma}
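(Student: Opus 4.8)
The plan is to read the formula off the closed form (\ref{eq06}) for $w_n$, exploiting the fact that the cubic map $x\mapsto 4x^3-3x$ corresponds, at the level of the parameter $\alpha$, simply to cubing. First I would record the elementary consequences of $\alpha=x+\sqrt{x^2-1}$, namely $\alpha^{-1}=x-\sqrt{x^2-1}$, $\alpha\alpha^{-1}=1$, and $\alpha+\alpha^{-1}=2x$. Cubing the last relation and using $(\alpha+\alpha^{-1})^3=\alpha^3+\alpha^{-3}+3(\alpha+\alpha^{-1})$ gives
\[
\alpha^3+\alpha^{-3}=(2x)^3-3(2x)=2(4x^3-3x).
\]
Hence $\beta:=\alpha^3$ satisfies $\beta+\beta^{-1}=2(4x^3-3x)$ and $\beta\beta^{-1}=1$, so $\beta$ and $\beta^{-1}$ are the two roots of $T^2-2(4x^3-3x)T+1=0$, i.e. $\beta$ is exactly the parameter attached to the argument $4x^3-3x$ in the sense of (\ref{eq06}). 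Since (\ref{eq06}) is invariant under $\beta\leftrightarrow\beta^{-1}$, the choice of branch of the square root is immaterial.

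Next I would substitute $\beta=\alpha^3$ for the parameter and $4x^3-3x$ for the argument into the first branch of (\ref{eq06}), which is legitimate as long as $4x^3-3x\neq\pm1$, obtaining
\[
w_n(4x^3-3x)=\frac{(\alpha^3+1)\alpha^{3n}-(\alpha^{-3}+1)\alpha^{-3n}}{\alpha^3-\alpha^{-3}}.
\]
It then remains to recast the right-hand side in the asserted asymmetric form. Writing the numerator as $(\alpha^{3n+3}-\alpha^{-3n})+(\alpha^{3n}-\alpha^{-3n-3})$, I would verify the purely algebraic chain
\[
\frac{(\alpha^3+1)\alpha^{3n}-(\alpha^{-3}+1)\alpha^{-3n}}{\alpha^3-\alpha^{-3}}=\frac{\alpha^{3n+3}-\alpha^{-3n}}{\alpha^3-1}=\frac{\alpha^{3n+2}-\alpha^{-3n-1}}{\alpha^2-\alpha^{-1}}.
\]
The second equality is just clearing a factor $\alpha$ from numerator and denominator; the first follows by cross-multiplication, both products reducing to $\alpha^{3n+6}-\alpha^{3n}+\alpha^{-3n-3}-\alpha^{-3n+3}$. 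This is the only computation in the argument, and it is entirely routine.

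The point that genuinely needs care is the range of validity. The excluded values $x=1$ and $x=-1/2$ are precisely those for which $\alpha^3=1$, so that the denominator $\alpha^2-\alpha^{-1}$ (equivalently $\alpha^3-1$) of the claimed formula vanishes; these are rightly barred by hypothesis. The substitution into the first branch of (\ref{eq06}) above additionally breaks down when $4x^3-3x=-1$, i.e. $(x+1)(2x-1)^2=0$, that is for $x\in\{-1,1/2\}$, where $\alpha^3=-1$ and the intermediate fraction becomes $0/0$. At these two remaining points the asserted formula is nonetheless well defined, and I would check it directly: with $\alpha^3=-1$ one has $\alpha^{3n+2}=(-1)^n\alpha^2$ and $\alpha^{-3n-1}=(-1)^n\alpha^{-1}$, so the right-hand side collapses to $(-1)^n$, matching $w_n(4x^3-3x)=w_n(-1)=(-1)^n$ from (\ref{eq06}). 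I expect this boundary bookkeeping, rather than the algebra, to be the only subtle part of the proof.
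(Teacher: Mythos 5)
Your proof is correct, and it takes a more direct route than the paper. The paper's own argument is a trisection-style computation modeled on its Lemma \ref{L11}: it expands $w_{3n}(x)$ via (\ref{eq06}), splits it as $w_n(4x^3-3x)(\alpha^2+1+\alpha^{-2})$ plus a remainder term, and then solves for $w_n(4x^3-3x)$. Both arguments ultimately rest on the same key identification that you make explicit -- namely that $\beta=\alpha^3$ satisfies $\beta+\beta^{-1}=2(4x^3-3x)$, so $\beta$ is the parameter attached to the argument $4x^3-3x$ in (\ref{eq06}) -- but you substitute it directly and reduce the fraction, bypassing the detour through $w_{3n}$; the resulting algebra is the same in substance but shorter and easier to check. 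Moreover, your treatment of the degenerate points is genuinely more complete than the paper's: the paper's manipulation tacitly uses the generic branch of (\ref{eq06}) both for $w_{3n}(x)$ and for $w_n(4x^3-3x)$, which degenerates to $0/0$ when $x=-1$ or $x=1/2$ (i.e., when $4x^3-3x=-1$, $\alpha^3=-1$), even though the lemma as stated covers these values; your direct verification that both sides equal $(-1)^n$ there closes that small gap, and your observation that the formula is invariant under $\alpha\leftrightarrow\alpha^{-1}$ disposes of the branch-of-square-root issue cleanly.
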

\begin{proof}
Starting with $w_{3n}(x)$, by (\ref{eq06}),
 we get
\begin{equation*}
\begin{split}
w_{3n}(x)&=
\frac{(\alpha+1)\alpha^{3n}-(\alpha^{-1}+1)\alpha^{-3n}}{\alpha-\alpha^{-1}} \\
&=
\frac{(\alpha^3+1)\alpha^{3n}-(\alpha^{-3}+1)\alpha^{-3n}}{\alpha^3-\alpha^{-3}}\cdot\frac{\alpha^3-\alpha^{-3}}{\alpha-\alpha^{-1}} \\
&+
\frac{\alpha^{3n+1}-\alpha^{3n+3}-\alpha^{-3n-1}+\alpha^{-3n-3}}{\alpha-\alpha^{-1}} \\
&=w_n(4x^3-3x)(\alpha^2+1+\alpha^{-2}) \\[3pt]
&+
\frac{\alpha^{3n+1}-\alpha^{3n+3}-\alpha^{-3n-1}+\alpha^{-3n-3}}{\alpha-\alpha^{-1}}.
\end{split}
\end{equation*}
Comparing the right and left-hand sides, we obtain
$$
w_n(4x^3-3x)(\alpha^2+1+\alpha^{-2})=
\frac{\alpha^{3n}+\alpha^{3n+3}-\alpha^{-3n}
-\alpha^{-3n-3}}{\alpha-\alpha^{-1}}=
\frac{(\alpha^3+1)(\alpha^{3n}-\alpha^{-3n-3})}{\alpha-\alpha^{-1}}
$$
and therefore,
$$
w_n(4x^3-3x)=
\frac{(\alpha^3+1)(\alpha^{3n}-\alpha^{-3n-3})}{\alpha^3-\alpha^{-3}}
=\frac{\alpha^3(\alpha^{3n}-\alpha^{-3n-3})}{\alpha^3-1}=
\frac{\alpha^{3n+2}-\alpha^{-3n-1}}{\alpha^2-\alpha^{-1}}.
$$
\end{proof}
\begin{lemma} \label{L4.2}
Let $p$ be a prime, $p>3$, and let $x\in D_p$. Then
\begin{align*}
(2x+1)\cdot w_{\lfloor\frac{p}{3}\rfloor}(4x^3-3x)&\equiv \left(\frac{p}{3}\right)x+
\left(\frac{x^2-1}{p}\right)(x+1)  \pmod{p}, \\
(2x+1)\cdot w_{\lfloor\frac{2p}{3}\rfloor}(4x^3-3x)&\equiv \left(\frac{p}{3}\right)(1-2x^2)+2\left(\frac{x^2-1}{p}\right)x(x+1)
\pmod{p}.
\end{align*}
\end{lemma}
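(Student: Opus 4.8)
The plan is to imitate the proof of Lemma~\ref{L12}, with the quadratic identity of Lemma~\ref{L11} replaced by the cubic closed form of Lemma~\ref{L4.1}. Write $\alpha=x+\sqrt{x^2-1}$, abbreviate $s=\sqrt{x^2-1}$, and set $\delta=\bigl(\frac{x^2-1}{p}\bigr)$ and $\lambda=\bigl(\frac{p}{3}\bigr)$, so that $\lambda=1$ exactly when $p\equiv1\pmod 3$ and $\lambda=-1$ when $p\equiv2\pmod 3$. For the generic case $x\not\equiv\pm1,-1/2\pmod p$, Lemma~\ref{L4.1} presents $w_{\lfloor p/3\rfloor}(4x^3-3x)$ and $w_{\lfloor 2p/3\rfloor}(4x^3-3x)$ as $(\alpha^{3n+2}-\alpha^{-3n-1})/(\alpha^2-\alpha^{-1})$ with $n=\lfloor p/3\rfloor$ and $n=\lfloor 2p/3\rfloor$ respectively, and the whole proof consists of reducing these expressions modulo $p$.

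First I would record the arithmetic input that drives everything. By the binomial theorem in $D_p[\sqrt{x^2-1}]$ together with Fermat's little theorem and Euler's criterion, exactly as in Lemma~\ref{L12}, one has $\alpha^p=(x+\sqrt{x^2-1})^p\equiv x+\delta s\pmod p$, whence $\alpha^{-p}\equiv x-\delta s$ and $\alpha^{\pm 2p}\equiv\alpha^{\pm2\delta}$, where $\alpha^{\delta}$ denotes $\alpha$ if $\delta=1$ and $\alpha^{-1}$ if $\delta=-1$. Next I would reduce the exponents: writing $3\lfloor p/3\rfloor=p-r$ with $r=p\bmod 3$ gives $3\lfloor p/3\rfloor+2=p+(2-r)$ and $-3\lfloor p/3\rfloor-1=-p+(r-1)$, and similarly the exponents for $n=\lfloor 2p/3\rfloor$ reduce to $2p$-shifted values. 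Feeding the congruences for $\alpha^{\pm p}$ and $\alpha^{\pm 2p}$ into these shifted powers turns each numerator into a short Laurent polynomial in $\alpha$, i.e.\ an explicit element $A(x)+B(x)s$.

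The key simplification is the factorization $\alpha^2-\alpha^{-1}=(2x^2-x-1)+(2x+1)s=(2x+1)(x-1+s)$ of the common denominator; this is precisely why the assertion is phrased after multiplication by $2x+1$. In each of the (at most) eight sub-cases indexed by $(\lambda,\delta)$, the reduced numerator carries the factor $x-1+s$, so dividing by the denominator and multiplying by $2x+1$ cancels $x-1+s$ together with one power of $2x+1$ and leaves a genuine polynomial in $x$; matching these polynomials against the claimed right-hand sides $\lambda x+\delta(x+1)$ and $\lambda(1-2x^2)+2\delta x(x+1)$ finishes the generic case. To keep the numerator computations short I would evaluate $\alpha^2,\alpha^3,\alpha^{-2}$ via the Chebyshev-type relations $\alpha^k+\alpha^{-k}=2T_k(x)$ and $\alpha^k-\alpha^{-k}=2s\,U_{k-1}(x)$ rather than expanding by hand.

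Finally the excluded values must be handled directly. For $x\equiv\pm1$ one has $4x^3-3x\equiv\pm1$ and $\delta=0$, so the left-hand sides follow from $w_n(1)=2n+1$ and $w_n(-1)=(-1)^n$ in (\ref{w-101}), using that $\lfloor p/3\rfloor$ and $\lfloor 2p/3\rfloor$ have the parities dictated by $p\bmod 6$; the point $x\equiv-1/2$ makes $2x+1\equiv0$, so the left-hand sides vanish and one need only check that the right-hand sides do too. I expect the main obstacle to be the bookkeeping in the middle step: correctly pinning down the four exponents in terms of $p$ and $r$, and then verifying in each $(\lambda,\delta)$ case that $x-1+s$ really divides the reduced numerator, which is the cubic analogue of the cancellation that occurs automatically in the quadratic computation of Lemma~\ref{L12}.
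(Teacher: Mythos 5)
Your proposal follows essentially the same route as the paper's own proof: express $w_{\lfloor p/3\rfloor}(4x^3-3x)$ and $w_{\lfloor 2p/3\rfloor}(4x^3-3x)$ via Lemma~\ref{L4.1}, reduce the exponents according to $p\bmod 3$, apply $\alpha^{\pm p}\equiv x\pm\bigl(\tfrac{x^2-1}{p}\bigr)\sqrt{x^2-1}\pmod{p}$, cancel against the factorization $\alpha^2-\alpha^{-1}=(2x+1)\bigl(x-1+\sqrt{x^2-1}\bigr)$, and check the degenerate points directly from (\ref{eq06}). The only differences are cosmetic: the paper keeps $\delta=\bigl(\tfrac{x^2-1}{p}\bigr)$ symbolic rather than splitting into $(\lambda,\delta)$ sub-cases, and it absorbs $x\equiv-1$ into the generic computation instead of treating it as an exceptional value.
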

\begin{proof}
First we suppose that $x\not\equiv 1, -1/2 \pmod{p}$.
Then by Lemma \ref{L4.1}, if $p\equiv 1$ (mod $3$), we have
\begin{equation} \label{wp31}
w_{\lfloor\frac{p}{3}\rfloor}(4x^3-3x)=w_{\frac{p-1}{3}}(4x^3-3x)=
\frac{\alpha^{p+1}-\alpha^{-p}}{\alpha^2-\alpha^{-1}}.
\end{equation}
For  $p$-powers of $\alpha$ and $\alpha^{-1}$, we easily obtain
\begin{align} \label{ap}
\alpha^{\pm p}=(x\pm \sqrt{x^2-1})^p&\equiv x^p\pm (\sqrt{x^2-1})^p
\equiv x\pm \sqrt{x^2-1}(x^2-1)^{\frac{p-1}{2}} \nonumber \\
&\equiv x\pm\left(\frac{x^2-1}{p}\right)\sqrt{x^2-1} \pmod{p}.
\end{align}
Substituting (\ref{ap}) into (\ref{wp31}) and simplifying, we get
\begin{equation*}
\begin{split}
w_{\lfloor\frac{p}{3}\rfloor}(4x^3-3x)&\equiv
\frac{(x+\sqrt{x^2-1})\bigl(x+
\bigl(\frac{x^2-1}{p}\bigr)\sqrt{x^2-1}\bigr)-x+
\bigl(\frac{x^2-1}{p}\bigr)\sqrt{x^2-1}}{(2x+1)(x-1+\sqrt{x^2-1})} \\
&\equiv \frac{x+\bigl(\frac{x^2-1}{p}\bigr)(x+1)}{2x+1} \pmod{p}.
\end{split}
\end{equation*}
If $p\equiv 2$ (mod $3$), then, by Lemma \ref{L4.1} and (\ref{ap}), we have
\begin{equation*}
\begin{split}
w_{\lfloor\frac{p}{3}\rfloor}(4x^3-3x)&=w_{\frac{p-2}{3}}(4x^3-3x)=
\frac{\alpha^p-\alpha^{1-p}}{\alpha^2-\alpha^{-1}} \\
&\equiv
\frac{x+\bigl(\frac{x^2-1}{p}\bigr)\sqrt{x^2-1}-
(x+\sqrt{x^2-1})\bigl(x-
\bigl(\frac{x^2-1}{p}\bigr)\sqrt{x^2-1}\bigr)}{(2x+1)(x-1+
\sqrt{x^2-1})} \\
&\equiv \frac{-x+\bigl(\frac{x^2-1}{p}\bigr)(x+1)}{2x+1} \pmod{p}
\end{split}
\end{equation*}
and the first congruence of the lemma follows.
Similarly, if $p\equiv 1$ (mod $3$), then we have
\begin{equation*}
\begin{split}
w_{\lfloor\frac{2p}{3}\rfloor}(4x^3-3x)&=w_{\frac{2(p-1)}{3}}(4x^3-3x)=
\frac{\alpha^{2p}-\alpha^{1-2p}}{\alpha^2-\alpha^{-1}} \\
&\equiv
\frac{\bigl(x+\bigl(\frac{x^2-1}{p}\bigr)\sqrt{x^2-1}\bigr)^2-
(x+\sqrt{x^2-1})\bigl(x-
\bigl(\frac{x^2-1}{p}\bigr)\sqrt{x^2-1}\bigr)^2}{(2x+1)(x-1
+\sqrt{x^2-1})} \pmod{p}.
\end{split}
\end{equation*}
Simplifying, we easily find
$$
w_{\lfloor\frac{2p}{3}\rfloor}(4x^3-3x)\equiv \frac{1-2x^2+2\bigl(\frac{x^2-1}{p}\bigr)x(x+1)}{2x+1} \pmod{p}.
$$
If $p\equiv 2$ (mod $3$), then
\begin{equation*}
\begin{split}
w_{\lfloor\frac{2p}{3}\rfloor}(4x^3-3x)&=w_{\frac{2p-1}{3}}(4x^3-3x)=
\frac{\alpha^{2p+1}-\alpha^{-2p}}{\alpha^2-\alpha^{-1}} \\
&\equiv
\frac{(x+\sqrt{x^2-1})\bigl(x+
\bigl(\frac{x^2-1}{p}\bigr)\sqrt{x^2-1}\bigr)^2-
\bigl(x-\bigl(\frac{x^2-1}{p})\sqrt{x^2-1}\bigr)^2}{(2x+1)(x-1+
\sqrt{x^2-1})} \pmod{p},
\end{split}
\end{equation*}
and after simplification we get
$$
w_{\lfloor\frac{2p}{3}\rfloor}(4x^3-3x)\equiv \frac{2x^2-1+2\bigl(\frac{x^2-1}{p}\bigr)x(x+1)}{2x+1} \pmod{p},
$$
as desired.

Finally, if $x\equiv 1$ (mod $p$), then, by (\ref{eq06}),
we have
$3w_{\lfloor\frac{p}{3}\rfloor}(1)=3(2\lfloor p/3\rfloor+1)\equiv (\frac{p}{3})$ (mod $p$) and
$3w_{\lfloor\frac{2p}{3}\rfloor}(1)=3(2\lfloor 2p/3\rfloor+1)\equiv -(\frac{p}{3})$ (mod $p$),
which coincide with the right-hand sides of the required congruences when $x\equiv 1$ (mod $p$).

If $x\equiv -1/2$ (mod $p$), then the congruences become trivial  and the proof is complete.
\end{proof}
\begin{lemma} \label{L4.3}
Let $p$ be a prime, $p>3$,  and let $x\in D_p$. Then we have modulo $p$,
\begin{align*}
(2x+1)\cdot w_{\lfloor\frac{p}{6}\rfloor}(4x^3-3x)&\equiv \left(\frac{2x-2}{p}\right)x+
\left(\frac{-6x-6}{p}\right)(x+1),   \\
(2x+1)\cdot w_{\lfloor\frac{5p}{6}\rfloor}(4x^3-3x)&\equiv \!\left(\frac{2x-2}{p}\right)x(4x^2+2x-1)\!-\!\left(\frac{-6x-6}{p}\right)(x+1)(4x^2-2x-1).
\end{align*}
\end{lemma}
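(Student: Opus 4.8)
The plan is to follow the proof of Lemma~\ref{L4.2}, the only new feature being that $\lfloor p/6\rfloor$ and $\lfloor 5p/6\rfloor$ lead to half-integral exponents of $\alpha=x+\sqrt{x^2-1}$; to avoid them I would pass to a square root of $\alpha$. Set $u=\sqrt{(x+1)/2}$, $v=\sqrt{(x-1)/2}$ and $\beta=u+v$, so that $\beta^2=\alpha$, $\beta^{-1}=u-v$, $u^2=(x+1)/2$, $v^2=(x-1)/2$, and $uv=\sqrt{x^2-1}/2$. Lemma~\ref{L4.1} then reads $w_n(4x^3-3x)=(\beta^{6n+4}-\beta^{-6n-2})/(\beta^4-\beta^{-2})$, and a direct computation gives the factorization $\beta^4-\beta^{-2}=\alpha^2-\alpha^{-1}=(2x+1)\bigl(x-1+\sqrt{x^2-1}\bigr)$, which accounts for the factor $2x+1$ on the left-hand sides of the lemma.

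Since $p>3$, we have $p\equiv 1$ or $5\pmod 6$; computing $6n+4$ and $6n+2$ for $n=\lfloor p/6\rfloor$ and $n=\lfloor 5p/6\rfloor$ reduces each numerator to $\beta^{\pm p}$ (resp.\ $\beta^{\pm 5p}$) times a fixed power of $\beta$. The crucial step is the Frobenius evaluation $\beta^p\equiv u^p+v^p\equiv \epsilon_+u+\epsilon_-v\pmod p$, where $\epsilon_\pm=\bigl(\frac{2x\pm2}{p}\bigr)$ and I use Euler's criterion $\bigl((x\pm1)/2\bigr)^{(p-1)/2}\equiv\epsilon_\pm$; this is valid once $x\not\equiv\pm1$. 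I would then record the identity $\epsilon_+u+\epsilon_-v=\epsilon_+\beta^{s}$ with $s:=\epsilon_+\epsilon_-=\bigl(\frac{x^2-1}{p}\bigr)\in\{\pm1\}$, which yields $\beta^{kp}\equiv\epsilon_+\beta^{ks}$ for every odd $k$; in particular $\beta^{p}\equiv\epsilon_+\beta^{s}$ and $\beta^{5p}\equiv\epsilon_+\beta^{5s}$.

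Substituting these congruences, each numerator becomes $\epsilon_+$ times a difference of integer powers of $\beta$, i.e.\ of $\alpha$; dividing by $\alpha^2-\alpha^{-1}$ and rationalizing makes the $\sqrt{x^2-1}$ part cancel and leaves a polynomial in $x$. For $w_{\lfloor p/6\rfloor}$ a short computation gives $(2x+1)w_{\lfloor p/6\rfloor}\equiv\epsilon_+(x+1)+\epsilon_-x$ when $p\equiv1\pmod6$ and $\equiv-\epsilon_+(x+1)+\epsilon_-x$ when $p\equiv5\pmod6$, and the two collapse to the stated formula via $\bigl(\frac{-6x-6}{p}\bigr)=\bigl(\frac{-3}{p}\bigr)\epsilon_+$ together with $\bigl(\frac{-3}{p}\bigr)=1$ or $-1$ according as $p\equiv1$ or $5\pmod6$. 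For $w_{\lfloor 5p/6\rfloor}$ the same procedure, now using $\beta^{5p}\equiv\epsilon_+\beta^{5s}$, produces the cubic factors after elementary polynomial divisions such as $\bigl(\alpha^{4}-\alpha^{-3}\bigr)/\bigl(x-1+\sqrt{x^2-1}\bigr)=8x^3+4x^2-4x-1$; the identities $x(4x^2+2x-1)+(x+1)(4x^2-2x-1)=8x^3+4x^2-4x-1$ and $x(4x^2+2x-1)-(x+1)(4x^2-2x-1)=2x+1$ then let the four cases ($p\bmod6$ and $s=\pm1$) collapse to the single symmetric expression claimed.

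I expect the main difficulty to be purely bookkeeping: carrying the two parameters $p\bmod6$ and $s=\bigl(\frac{x^2-1}{p}\bigr)$ through the substitution and verifying that all four combinations reduce to one formula, together with the polynomial-division identities. Finally I would dispose of the values excluded above, namely $x\equiv1,-1/2$ (barred in Lemma~\ref{L4.1}) and $x\equiv-1$ (where the identity $\epsilon_+u+\epsilon_-v=\epsilon_+\beta^s$ degenerates since $u=0$): at $x\equiv1$ and $x\equiv-1/2$ one has $4x^3-3x=1$ and uses $w_n(1)=2n+1$ from (\ref{w-101}), while at $x\equiv-1$ one has $4x^3-3x=-1$ and $w_n(-1)=(-1)^n$; in each case a direct evaluation of both sides, using the values of $\lfloor p/6\rfloor$ and $\lfloor 5p/6\rfloor$ modulo $p$ and a parity check modulo $2$, matches the right-hand side and completes the proof.
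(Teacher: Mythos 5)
Your proposal is correct and takes essentially the same route as the paper's proof: both rest on Lemma~\ref{L4.1}, pass to the square root $\beta=\sqrt{(x+1)/2}+\sqrt{(x-1)/2}$ of $\alpha$, and evaluate the $p$-th (resp.\ $5p$-th) powers via Frobenius and Euler's criterion before simplifying case by case over $p\bmod 6$. Your two refinements --- packaging the Frobenius step as $\beta^{kp}\equiv\epsilon_+\beta^{ks}$ for odd $k$, and treating $x\equiv-1$ separately because there $\epsilon_+=0$ --- are minor tidy variants of the paper's computation, which instead expands the fifth powers directly and absorbs $x\equiv-1$ into the generic case through the convention $\bigl(\tfrac{0}{p}\bigr)=0$.
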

\begin{proof}
First we suppose that $x\not\equiv 1, -1/2$ (mod $p$).
If $p\equiv 1$ (mod $6$), then, by Lemma \ref{L4.1},  we have
\begin{equation*}
w_{\lfloor\frac{p}{6}\rfloor}(4x^3-3x)=w_{\frac{p-1}{6}}(4x^3-3x)=
\frac{\alpha^{\frac{p+1}{2}+1}-\alpha^{-\frac{p+1}{2}}}{\alpha^2-\alpha^{-1}}.
\end{equation*}
Substituting
$\alpha=x+\sqrt{x^2-1}=(\sqrt{(x+1)/2}+\sqrt{(x-1)/2})^2$,
we have
\begin{equation*}
\begin{split}
w_{\lfloor\frac{p}{6}\rfloor}(4x^3-3x)&=
\frac{(x+\sqrt{x^2-1})(\sqrt{x+1}+\sqrt{x-1})^{p+1}-
(\sqrt{x+1}-\sqrt{x-1})^{p+1}}{2^{(p+1)/2}(\alpha^2-\alpha^{-1})} \\[3pt]
&=\frac{(x+\sqrt{x^2-1})(\sqrt{x+1}+\sqrt{x-1})^p
-1/2(\sqrt{x+1}-\sqrt{x-1})^{p+2}}{2^{(p+1)/2}(2x+1)\sqrt{x-1}} \\[3pt]
&\equiv
\frac{(x+\sqrt{x^2-1})((x+1)^{\frac{p}{2}}+(x-1)^{\frac{p}{2}})-\!
(x-\sqrt{x^2-1})((x+1)^{\frac{p}{2}}
\!-\!(x-1)^{\frac{p}{2}})}{2^{(p+1)/2}(2x+1)\sqrt{x-1}} \\[3pt]
&\equiv \frac{x(x-1)^{\frac{p-1}{2}}+(x+1)^{\frac{p+1}{2}}}{2^{(p-1)/2}(2x+1)}
\equiv \frac{\bigl(\frac{2x-2}{p}\bigr)x+\bigl(\frac{2x+2}{p}\bigr)(x+1)}{2x+1}
\pmod{p}.
\end{split}
\end{equation*}
Since $(\frac{-3}{p})=(\frac{p}{3})=1$, we get the desired congruence in this case.

If $p\equiv 5$ (mod $6$), then we have
\begin{equation*}
w_{\lfloor\frac{p}{6}\rfloor}(4x^3-3x)=
w_{\frac{p-5}{6}}(4x^3-3x)=
\frac{\alpha^{\frac{p-1}{2}}-\alpha^{-\frac{p-3}{2}}}{\alpha^2-\alpha^{-1}}
\end{equation*}
and therefore,
\begin{equation*}
\begin{split}
w_{\lfloor\frac{p}{6}\rfloor}(4x^3-3x)
&\equiv
\frac{(x-\sqrt{x^2-1})((x+1)^{\frac{p}{2}}+(x-1)^{\frac{p}{2}})-\!
(x+\sqrt{x^2-1})((x+1)^{\frac{p}{2}}
\!-\!(x-1)^{\frac{p}{2}})}{2^{(p+1)/2}(2x+1)\sqrt{x-1}} \\[3pt]
&\equiv \frac{x(x-1)^{\frac{p-1}{2}}-(x+1)^{\frac{p+1}{2}}}{2^{(p-1)/2}(2x+1)}
\equiv \frac{\bigl(\frac{2x-2}{p}\bigr)x-\bigl(\frac{2x+2}{p}\bigr)(x+1)}{2x+1}
\pmod{p},
\end{split}
\end{equation*}
as desired in view of the fact that $(\frac{-3}{p})=(\frac{p}{3})=-1$.

 The similar analysis can be applied for evaluating $w_{\lfloor\frac{5p}{6}\rfloor}(4x^3-3x)$ modulo $p$. If $p\equiv 1$ (mod $6$), then
\begin{equation*}
w_{\lfloor\frac{5p}{6}\rfloor}(4x^3-3x)=
w_{\frac{5(p-1)}{6}}(4x^3-3x)=
\frac{\alpha^{\frac{5p-1}{2}}-\alpha^{-\frac{5p-3}{2}}}{\alpha^2-\alpha^{-1}}.
\end{equation*}
Simplifying, we obtain
\begin{equation*}
\begin{split}
w_{\lfloor\frac{5p}{6}\rfloor}(4x^3&-3x)=
\frac{(x-\sqrt{x^2-1})(\sqrt{x+1}+\sqrt{x-1})^{5p}
\!-\!(x+\sqrt{x^2-1})(\sqrt{x+1}-\sqrt{x-1})^{5p}}{2^{(5p+1)/2} (2x+1)\sqrt{x-1}} \\[3pt]
&\equiv
\frac{(x-\sqrt{x^2-1})((x+1)^{\frac{p}{2}}+(x-1)^{\frac{p}{2}})^5-
(x+\sqrt{x^2-1})((x+1)^{\frac{p}{2}}
-(x-1)^{\frac{p}{2}})^5}{2^{(5p+1)/2}(2x+1)\sqrt{x-1}} \\[3pt]
&\equiv  \frac{\bigl(\frac{2x-2}{p}\bigr)x(4x^2+2x-1)-\bigl(\frac{2x+2}{p}\bigr)(x+1)(4x^2-2x-1)}{2x+1}
\pmod{p},
\end{split}
\end{equation*}
as desired. If $p\equiv 5$ (mod $6$), then
\begin{equation*}
\begin{split}
w_{\lfloor\frac{5p}{6}\rfloor}(4x^3&-3x)=w_{\frac{5p-1}{6}}(4x^3-3x)=
\frac{\alpha^{\frac{5p+3}{2}}
-\alpha^{-\frac{5p+1}{2}}}{\alpha^2-\alpha^{-1}} \\[3pt]
&=\frac{(\sqrt{x+1}+\sqrt{x-1})^{5p+2}
-(\sqrt{x+1}-\sqrt{x-1})^{5p+2}}{2^{(5p+3)/2}(2x+1)\sqrt{x-1}} \\[3pt]
&\equiv
\frac{(x+\sqrt{x^2-1})((x+1)^{\frac{p}{2}}+(x-1)^{\frac{p}{2}})^5-
(x-\sqrt{x^2-1})((x+1)^{\frac{p}{2}}
-(x-1)^{\frac{p}{2}})^5}{8\cdot 2^{(p-1)/2}(2x+1)\sqrt{x-1}} \\[3pt]
&\equiv  \frac{\bigl(\frac{2x-2}{p}\bigr)x(4x^2+2x-1)+\bigl(\frac{2x+2}{p}\bigr)(x+1)(4x^2-2x-1)}{2x+1}
\pmod{p},
\end{split}
\end{equation*}
and the congruence is true.  If $x\equiv 1$ (mod $6$), then, by (\ref{eq06}), we have
$3w_{\lfloor\frac{p}{6}\rfloor}(1)=3(2\lfloor p/6\rfloor+1)=2(\frac{p}{3})$ (mod $p$) and
$3w_{\lfloor\frac{5p}{6}\rfloor}(1)=3(2\lfloor 5p/6\rfloor+1)=-2(\frac{p}{3})$ (mod $p$),
which prove the lemma in this case too. Finally, if $x\equiv -1/2$ (mod $p$), we get the trivial congruences $0\equiv 0$, and the proof is complete.
\end{proof}
\begin{theorem}
Let $p$ be a prime, $p>3$,  and let $t\in D_p$.

If $t\not\equiv 0 \pmod{p}$, then
\begin{align}
\sum_{k=1}^{\lfloor p/3\rfloor}C_k^{(2)}\bigl(t^{2}(t+1)\bigr)^k&\equiv \frac{1+t}{2t}
-\frac{1-3t}{2t}\left(\frac{(1+t)(1-3t)}{p}\right) \pmod{p}, \nonumber \\
\sum_{k=1}^{p-1}C_k^{(2)}\bigl(t^2(t+1)\bigr)^k&\equiv
\frac{(1+t)(1-3t)}{2t}\left(1-\left(\frac{(1+t)(1-3t)}{p}\right)\right) \pmod{p}. \label{111}
\end{align}
If $3t+2\not\equiv 0 \pmod{p}$, then
\begin{align}
\sum_{k=1}^{\lfloor p/3\rfloor}{3k\choose k}\bigl(t^2(t+1)\bigr)^k&\equiv
\frac{3(t+1)}{2(3t+2)}\left(\left(\frac{(1+t)(1-3t)}{p}\right)-1\right) \pmod{p}, \label{112} \\
\sum_{k=1}^{p-1}{3k\choose k}\bigl(t^2(t+1)\bigr)^k&\equiv
\frac{3(t+1)^2}{2(3t+2)}\left(\left(\frac{(1+t)(1-3t)}{p}\right)-1\right) \pmod{p}. \label{113}
\end{align}
\end{theorem}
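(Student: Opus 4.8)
The plan is to reduce all four congruences to Theorem~\ref{T2} and Corollary~\ref{C2} evaluated at the point $s=t^2(t+1)$, and then apply the explicit evaluations of $w_{\lfloor p/3\rfloor}$ and $w_{\lfloor 2p/3\rfloor}$ from Lemma~\ref{L4.2}. The whole argument rests on one substitution: since the arguments appearing in Theorem~\ref{T2} are $1-27s/2$ and $27s/2-1$, while those in Lemma~\ref{L4.2} have the tripling form $4x^3-3x$, I would first solve $4x^3-3x=1-\frac{27}{2}t^2(t+1)$. Matching the coefficients of a linear ansatz $x=\alpha t+\beta$ forces $\alpha=-3/2$, $\beta=-1/2$, so that
\[
1-\tfrac{27}{2}t^2(t+1)=4x^3-3x, \qquad x=-\tfrac{3t+1}{2},
\]
and consequently $27s/2-1=4y^3-3y$ with $y=-x=\frac{3t+1}{2}$. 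This is exactly the form to which Lemma~\ref{L4.2} applies.

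For the second-order Catalan sums I set $s=t^2(t+1)$ in (\ref{eq13}) and in the first congruence of Corollary~\ref{C2}, so the relevant Jacobi values are $w_{\lfloor p/3\rfloor}(4x^3-3x)$ and $w_{\lfloor 2p/3\rfloor}(4x^3-3x)$ with $x=-(3t+1)/2$. I record the three data $2x+1=-3t$, $x+1=(1-3t)/2$, and $x^2-1=\frac{3(3t-1)(t+1)}{4}$; the last one gives
\[
\left(\frac{x^2-1}{p}\right)=\left(\frac{-3}{p}\right)\left(\frac{(1+t)(1-3t)}{p}\right)=\left(\frac{p}{3}\right)\left(\frac{(1+t)(1-3t)}{p}\right),
\]
using $\bigl(\frac{-3}{p}\bigr)=\bigl(\frac{p}{3}\bigr)$. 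Substituting these into Lemma~\ref{L4.2}, dividing by $2x+1=-3t$, and using $\bigl(\frac{p}{3}\bigr)^2=1$ to cancel the outer Legendre symbol coming from (\ref{eq13}) yields a closed form for $\sum_{k=0}^{\lfloor p/3\rfloor}C_k^{(2)}s^k$; subtracting the $k=0$ term $C_0^{(2)}=1$ produces the first stated congruence. The full sum $\sum_{k=1}^{p-1}$ follows in the same way from Corollary~\ref{C2}, this time combining \emph{both} congruences of Lemma~\ref{L4.2}.

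The binomial sums are treated identically once I note that the arguments in (\ref{eq15}) and in the second part of Corollary~\ref{C2} are $27s/2-1=4y^3-3y$ with $y=(3t+1)/2$, so I apply Lemma~\ref{L4.2} with $x$ replaced by $y$. Here $2y+1=3t+2$ (which is precisely why the hypothesis $3t+2\not\equiv 0$ is needed), $y+1=\frac{3(t+1)}{2}$, while $y^2-1=x^2-1$ reproduces the same Legendre symbol as above. The same routine---substitute, cancel $\bigl(\frac{p}{3}\bigr)^2$, divide by $2y+1=3t+2$, and subtract $\binom{0}{0}=1$---then delivers (\ref{112}) and (\ref{113}).

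I expect the only real work to be bookkeeping rather than any conceptual hurdle. The main obstacle is guessing the substitution $x=-(3t+1)/2$; after that everything is a routine (if somewhat lengthy) simplification, the delicate points being the two appearances of $\bigl(\frac{p}{3}\bigr)$---one from Theorem~\ref{T2}, one from Lemma~\ref{L4.2}---which must be cancelled against each other, and the shift by the $k=0$ term that converts the sums in the computation into the $k\ge 1$ sums of the statement. Finally, the hypotheses $t\not\equiv 0$ and $3t+2\not\equiv 0$ are exactly what legitimise the divisions by $2x+1=-3t$ and $2y+1=3t+2$; since Lemma~\ref{L4.2} already incorporates the boundary values $x\equiv 1$ and $x\equiv -1/2$, no separate cases are required.
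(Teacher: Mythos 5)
Your proposal is correct and follows essentially the same route as the paper: the paper likewise combines (\ref{eq13}), (\ref{eq15}) and Corollary \ref{C2} with Lemma \ref{L4.2}, parametrizing the sum argument as $\frac{2(1\mp x)(2x\pm 1)^2}{27}$ and then substituting $x=\mp(3t+1)/2$, which is exactly your substitution obtained by solving $4x^3-3x=1-\tfrac{27}{2}t^2(t+1)$. The bookkeeping you outline (cancelling $\bigl(\frac{p}{3}\bigr)^2=1$, dividing by $2x+1$ under the stated nonvanishing hypotheses, and subtracting the $k=0$ term) matches the paper's computation step for step.
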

\begin{proof}
  From (\ref{eq13}), Corollary \ref{C2} and Lemma \ref{L4.2} we have modulo $p$,
$$
\sum_{k=0}^{\lfloor p/3\rfloor}C_k^{(2)}\left(\frac{2(1-x)(2x+1)^2}{27}\right)^k
\!\equiv 3\left(\frac{p}{3}\right)w_{\lfloor\frac{p}{3}\rfloor}(4x^3-3x)\equiv
\frac{3x}{2x+1}+\frac{3x+3}{2x+1}\left(\frac{3-3x^2}{p}\right)
$$
and
\begin{equation*}
\begin{split}
\sum_{k=0}^{p-1}C_k^{(2)}\left(\frac{2(1-x)(2x+1)^2}{27}\right)^k
&\equiv\left(\frac{p}{3}\right)\bigl(2w_{\lfloor\frac{p}{3}\rfloor}(4x^3-3x)-
w_{\lfloor\frac{2p}{3}\rfloor}(4x^3-3x)\bigr) \\
&\equiv \frac{2x^2+2x-1}{2x+1}+\frac{2(1-x^2)}{2x+1}\left(
\frac{3-3x^2}{p}\right) \pmod{p}
\end{split}
\end{equation*}
for any $x\in D_p$ such that $2x+1\not\equiv 0$ (mod $p$). Replacing $x$ by $(-1-3t)/2$ with $t\not\equiv 0$ (mod~$p$), we get the first two congruences of the theorem.

Similarly, from (\ref{eq15}), Corollary \ref{C2} and Lemma \ref{L4.2}
for any $x\in D_p$ with $2x+1\not\equiv 0$ (mod~$p$), we have
\begin{align*}
\sum_{k=0}^{\lfloor p/3\rfloor}{3k\choose k}\!\left(\frac{2(x+1)(2x-1)^2}{27}\right)^k\!\!&\equiv \left(\frac{p}{3}\right)w_{\lfloor\frac{p}{3}\rfloor}(4x^3-3x)\equiv
\frac{x}{2x+1}+\frac{x+1}{2x+1}\left(\frac{3-3x^2}{p}\right), \\
\sum_{k=0}^{p-1}{3k\choose k}\!\left(\frac{2(x+1)(2x-1)^2}{27}\right)^k\!\!
&\equiv\frac{1}{3}\left(\frac{p}{3}\right)
\bigl(2w_{\lfloor\frac{p}{3}\rfloor}(4x^3-3x)+w_{\lfloor\frac{2p}{3}\rfloor}(4x^3-3x)\bigr) \\
&\equiv \frac{1+2x-2x^2}{3(2x+1)}+\frac{2(x+1)^2}{3(2x+1)}
\left(\frac{3-3x^2}{p}\right) \pmod{p}.
\end{align*}
This implies that
\begin{align*}
\sum_{k=1}^{\lfloor p/3\rfloor}{3k\choose k}\,\left(\frac{2(x+1)(2x-1)^2}{27}\right)^k\!\!&\equiv \frac{x+1}{2x+1}\left(\left(\frac{3-3x^2}{p}\right)-1\right)
\pmod{p}, \\
\sum_{k=1}^{p-1}{3k\choose k}\,\left(\frac{2(x+1)(2x-1)^2}{27}\right)^k\!\!&\equiv \frac{2(x+1)^2}{3(2x+1)}\left(\left(\frac{3-3x^2}{p}\right)-1\right)
\pmod{p}.
\end{align*}
Replacing $x$ by $(3t+1)/2$, we derive the other two congruences of the theorem.
\end{proof}
\begin{remark}
 Note that Z.\ H.\ Sun \cite[Thm.\ 2.3]{ZHS13} proved  congruence (\ref{112})
 by another method using cubic congruences.
If we put $t=-c/(c+1)$ in (\ref{111}) and (\ref{113}), we recover corresponding congruences of Z.\ W.\ Sun \cite[Thm.\ 1.1]{ZWS09}
proved by applying properties of third-order recurrences.
\end{remark}

\section{Cubic residues and non-residues and their application to congruences}      \label{Section5}

We begin with a brief review of basic facts from the theory of cubic residues that will be needed later in this section.
Let $\omega=e^{2\pi i/3}=(-1+i\sqrt{3})/2$. We  consider the ring of the Eisenstein  integers
${\mathbb Z}[\omega]=\{a+b\omega: a, b\in {\mathbb Z}\}$. To define the cubic residue symbol, we recall arithmetic properties of the ring
${\mathbb Z}[\omega]$
 including description of its units and primes \cite[Chapter 9]{IR}.

If $\alpha=a+b\omega\in {\mathbb Z}[\omega]$, the norm of $\alpha$ is defined by the formula $N(\alpha)=\alpha\overline{\alpha}=a^2-ab+b^2$, where
$\overline{\alpha}=a+b\overline{\omega}=a+b\omega^2=(a-b)-b\omega$
is the complex conjugate of $\alpha$. Note that the norm is a nonnegative integer always congruent to $0$ or $1$ modulo $3$.
It is well known that ${\mathbb Z}[\omega]$ is a unique factorization domain. The units of ${\mathbb Z}[\omega]$ are $\pm 1, \pm\omega, \pm\omega^2$.

Let $p$ be a prime in ${\mathbb Z}$, then $p$ in ${\mathbb Z}[\omega]$ falls into three categories \cite[Prop.\ 4.7]{Cox}: ${\rm (i)}$  if $p=3$, then
$3=-\omega^2(1-\omega)^2$, where $1-\omega$ is prime in ${\mathbb Z}[\omega]$ and $N(1-\omega)=(1-\omega)(1-\omega^2)=3$;
${\rm (ii)}$ if $p\equiv 2$ (mod $3$),
then $p$ remains prime in ${\mathbb Z}[\omega]$ and $N(p)=p^2$; ${\rm (iii)}$ if $p\equiv 1$ (mod $3$), then $p$ splits into the product of two conjugate non-associate
primes in ${\mathbb Z}[\omega]$, $p=\pi\overline{\pi}$ and $N(\pi)=\pi\overline{\pi}=p$. Moreover, every prime in ${\mathbb Z}[\omega]$ is associated with one of the primes listed in ${\rm (i)-(iii)}$.

An analog of Fermat's little theorem is true in ${\mathbb Z}[\omega]$: if $\pi$ is a prime and $\pi\nmid\alpha$, then
$$
\alpha^{N(\pi)-1}\equiv 1 \pmod{\pi}.
$$
Note that if $\pi$ is a prime such that $N(\pi)\ne 3,$ then $N(\pi)\equiv 1$ (mod $3$) and the expression $\alpha^{\frac{N(\pi)-1}{3}}$ is well defined
in ${\mathbb Z}[\omega]$, i.e.,  $\alpha^{\frac{N(\pi)-1}{3}}\equiv\omega^j$ (mod $\pi$) for a unique unit $\omega^j$. This leads to the definition
of the {\sl cubic residue character} of $\alpha$ modulo $\pi$ \cite[p.\ 112]{IR}:
\begin{equation} \label{tr0}
\left(\frac{\alpha}{\pi}\right)_3=\begin{cases}
\ds
0,
&  \text{if $\pi|\alpha$;} \\[3pt]
\omega^j, &   \text{if $\alpha^{\frac{N(\pi)-1}{3}}\equiv \omega^j \pmod{\pi}$.}
\end{cases}
\end{equation}
The cubic residue character has formal properties similar to those of the Legendre symbol~\cite[Prop.\ 9.3.3]{IR}:

${(i)}\,$ The congruence $x^3\equiv\alpha$ (mod $\pi$) is solvable in
${\mathbb Z}[\omega]$ if and only if $\left(\frac{\alpha}{\pi}\right)_3=1$, i.e., iff $\alpha$ is a cubic residue modulo $\pi$;

${(ii)}\,$ $\left(\frac{\alpha\beta}{\pi}\right)_3=\left(\frac{\alpha}{\pi}\right)_3\left(\
\frac{\beta}{\pi}\right)_3$;

${(iii)}\,$ $\overline{\left(\frac{\alpha}{\pi}\right)}_3=\left(\frac{\overline{\alpha}}{\overline{\pi}}\right)_3$;

${(iv)}\,$ If $\pi$ and $\theta$ are associates, then $\left(\frac{\alpha}{\pi}\right)_3=\left(\frac{\alpha}{\theta}\right)_3$;

${(v)}\,$ If $\alpha\equiv\beta$ (mod $\pi$), then $\left(\frac{\alpha}{\pi}\right)_3=\left(\frac{\beta}{\pi}\right)_3$.

Let $\pi=a+b\omega\in {\mathbb Z}[\omega]$. We say that $\pi$ is {\sl primary} if $\pi\equiv 2$ (mod $3$), that is equivalent to $a\equiv 2$ (mod $3$)
and $b\equiv 0$ (mod $3$).
If $\pi\in {\mathbb Z}[\omega]$,  $N(\pi)>1$ and $\pi\equiv\pm 2$ (mod $3$), we may decompose $\pi=\pm\pi_1\dots\pi_r$, where $\pi, \dots, \pi_r$ are primary primes \cite[p.\ 135]{IR}.  For $\alpha\in {\mathbb Z}[\omega]$,
the {\sl cubic Jacobi symbol} $\left(\frac{\alpha}{\pi}\right)_3$ is defined by
$$
\left(\frac{\alpha}{\pi}\right)_3=
\left(\frac{\alpha}{\pi_1}\right)_3\dots \left(\frac{\alpha}{\pi_r}\right)_3.
$$
Now let $p$ be a prime. We define a cubic residue modulo $p$ in ${\mathbb Z}$. We say that $m\in {\mathbb Z}$ is a {\sl cubic residue modulo} $p$ if the congruence $x^3\equiv m$ (mod $p$) has an integer solution, otherwise $m$ is called a {\sl cubic non-residue modulo} $p$.
If $p=3,$ then by Fermat's little theorem, $m^3\equiv m$ (mod $3$)
for all integers $m,$ so $x^3\equiv m$ (mod $3$) always has a solution.
If $p\equiv 2$ (mod $3$), then every integer $m$ is a cubic residue modulo $p$. Indeed, we have $2p-1\equiv 0$ (mod $3$)
and by Fermat's little theorem,
 $m\equiv m^{2p-1}=\bigl(m^{\frac{2p-1}{3}}\bigr)^3$ (mod $p$).
So the only interesting case which remains is when  a prime $p\equiv 1$ (mod $3$).

If a prime $p\equiv 1$ (mod $3$), then it is well known that there are unique integers $L$ and $|M|$ such that $4p=L^2+27M^2$ with $L\equiv 1$ (mod $3$).
In this case, $p$ splits into the product of primes of
${\mathbb Z}[\omega]$, $p=\pi\overline{\pi}$, where we can write $\pi$ in the form
$$
\pi=\frac{1}{2}(L+3M\sqrt{-3})=\frac{L+3M}{2}+3M\omega.
$$
It is easy to see that $\left(\frac{L}{3M}\right)^2\equiv -3$ (mod $p$)
and therefore for any integer $m$ coprime to $p$
by Euler's criterion \cite{L59, W75},
 we have one of the three possibilities
$$
m^{(p-1)/3}\equiv 1, \quad (-1-L/(3M))/2 \quad\text{or}\quad
(-1+L/(3M))/2 \pmod{p}.
$$
Moreover, $m^{(p-1)/3}\equiv 1$ (mod $p$) if and only if $m$ is a cubic residue modulo $p$. When $m$ is a prime and a cubic non-residue modulo $p$,
Williams \cite{W75} found a method how to choose the sign of $M$ so that $m^{(p-1)/3}\equiv (-1-L/(3M))/2$ (mod $p$).
To classify cubic residues and non-residues in ${\mathbb Z}$,  Sun \cite{ZHS98} introduced three subsets
$$
C_j(m)=\left\{c\in D_m\left|\left(\frac{c+1+2\omega}{m}\right)_3=\omega^j\right.\right\},
\quad j=0, 1, 2, \,\, m\in{\mathbb N}, \, m\not\equiv 0\pmod{3},
$$
of $D_m,$ which posses the following properties:

\vspace{0.2cm}

${\rm (i)} \,\, C_0(m)\cup C_1(m)\cup C_2(m)=\{c\in D_m\, |\,\, \gcd(c^2+3, m)=1\}$;

\vspace{0.1cm}

${\rm (ii)} \,\, c\in C_0(m) \,\,\,\text{if and only if}\,\,\,
-c\in C_0(m)$;

\vspace{0.1cm}

${\rm (iii)} \,\, c\in C_1(m) \,\,\,\text{if and only if}\,\,\,
-c\in C_2(m)$;

\vspace{0.1cm}

${\rm (iv)} \,\, \text{If} \,\,\, c, c'\in D_m\,\,\,\text{and} \,\,\,
cc'\equiv -3\pmod{m}, \,\, \text{then} \,\,\, c\in C_j(m)   \,\,\,\text{if and only if}\,\,\,
c'\in C_j(m)$.

\vspace{0.1cm}

Using these sets, Z.\ H.\ Sun   proved the following criterion of cubic residuacity in ${\mathbb Z}$: {\it Let $p$ be a prime of the form
$p\equiv 1 \pmod{3}$ and hence $4p=L^2+27M^2$ for some $L, M\in {\mathbb Z}$ and $L\equiv 1 \pmod{3}$. If $q$ is a prime with $q|M$, then
$q^{(p-1)/3}\equiv 1 \pmod{p}$. If $q\nmid M$ and $j\in\{0,1,2\}$, then
\begin{equation} \label{cr}
q^{(p-1)/3}\equiv ((-1-L/(3M))/2)^j \!\!\!\pmod{p} \,\,\,
\text{if and only if} \,\,\, L/(3M)\in C_j(q).
\end{equation} }
Sun \cite{ZHS13} gave a simple criterion  in terms of values of the sum $\sum_{k=1}^{\lfloor p/3\rfloor}{3k\choose k}
\left(\frac{4}{9(c^2+3)}\right)^k$ modulo a prime  $p$ for $c\in C_j(p)$ and conjectured a similar criterion in terms of the sum
 $\sum_{k=(p+1)/2}^{\lfloor 2p/3\rfloor}{3k\choose k}t^k$.

 In this section, using our formulas from Theorem \ref{T2},
we address this question of Sun (see Theorem~\ref{MqL} below).
First,  we will need the following statement.
\begin{lemma}{\rm(\cite[Lemma 2.2]{ZHS98})} \label{LR}
Let $p$ be a prime, $p\ne 3$, and let $c\in D_p$.

$(i)$ If $p\equiv 1\pmod{3}$ and so $p$ splits into the product of primes, $p=\pi\overline{\pi}$ with $\pi\in {\mathbb Z}[\omega]$
and $\pi\equiv 2\pmod{3}$, then
\begin{equation*}
\begin{split}
\left(\frac{c+1+2\omega}{p}\right)_3&=
\left(\frac{(c^2+3)(c-1-2\omega)}{\pi}\right)_3, \\
\left(\frac{c-1-2\omega}{p}\right)_3&=
\left(\frac{c+1+2\omega}{p}\right)_3^{-1}=
\left(\frac{(c^2+3)(c+1+2\omega)}{\pi}\right)_3.
\end{split}
\end{equation*}

$(ii)$ If $p\equiv 2\pmod{3},$ then
\begin{equation*}
\begin{split}
\left(\frac{c+1+2\omega}{p}\right)_3&\equiv
(c^2+3)^{(p-2)/3}(c+1+2\omega)^{(p+1)/3}  \pmod{p}, \\
\left(\frac{c-1-2\omega}{p}\right)_3&=
\left(\frac{c+1+2\omega}{p}\right)_3^{-1}\equiv
(c^2+3)^{(p-2)/3}(c-1-2\omega)^{(p+1)/3} \pmod{p}.
\end{split}
\end{equation*}
\end{lemma}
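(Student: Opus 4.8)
The plan is to reduce both parts to two structural facts about the cubic residue character: its behaviour under complex conjugation (property (iii) above) and, in the inert case, the action of the Frobenius automorphism. Throughout I would set $\alpha=c+1+2\omega$ and record at the outset the two elementary identities that drive everything. Since $\overline\omega=\omega^2=-1-\omega$, the conjugate is $\overline\alpha=c+1+2\omega^2=c-1-2\omega$, and the norm is
$$N(\alpha)=\alpha\overline\alpha=(c+1+2\omega)(c-1-2\omega)=c^2+3.$$
(The hypothesis $c\in D_p$ together with $\gcd(c^2+3,p)=1$ guarantees $p\nmid\alpha$, so every symbol below is a genuine cube root of unity; the degenerate case $p\mid c^2+3$ is excluded.)

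For part (i), when $p\equiv1\pmod3$ I write $p=\pi\overline\pi$; since $\pi\equiv2\pmod3$ forces $\overline\pi=(a-b)-b\omega\equiv2\pmod3$ as well, both primes are primary and the cubic Jacobi symbol splits cleanly as $\left(\frac{\beta}{p}\right)_3=\left(\frac{\beta}{\pi}\right)_3\left(\frac{\beta}{\overline\pi}\right)_3$. Applying property (iii) in the form $\left(\frac{\beta}{\overline\pi}\right)_3=\overline{\left(\frac{\overline\beta}{\pi}\right)_3}=\left(\frac{\overline\beta}{\pi}\right)_3^{-1}$ (the last step because the symbol has modulus one) and taking $\beta=\alpha$ with $\overline\alpha=c-1-2\omega$ gives
$$\left(\frac{\alpha}{p}\right)_3=\left(\frac{\alpha}{\pi}\right)_3\left(\frac{\overline\alpha}{\pi}\right)_3^{-1}=\left(\frac{\alpha}{\pi}\right)_3\left(\frac{\overline\alpha}{\pi}\right)_3^{2}.$$
Rewriting the target right-hand side via $c^2+3=\alpha\overline\alpha$ yields $\left(\frac{(c^2+3)\overline\alpha}{\pi}\right)_3=\left(\frac{\alpha\overline\alpha^{2}}{\pi}\right)_3=\left(\frac{\alpha}{\pi}\right)_3\left(\frac{\overline\alpha}{\pi}\right)_3^{2}$, which matches exactly. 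The companion formula for $\overline\alpha$ comes from the identical computation with the roles of $\alpha$ and $\overline\alpha$ interchanged, using $\overline{\overline\alpha}=\alpha$.

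For part (ii), when $p\equiv2\pmod3$ the prime $p$ stays prime and primary in $\mathbb{Z}[\omega]$ with $N(p)=p^2$, so by definition $\left(\frac{\alpha}{p}\right)_3\equiv\alpha^{(p^2-1)/3}\pmod p$. The crux is the Frobenius congruence $\alpha^p\equiv\overline\alpha\pmod p$, valid because $a^p\equiv a$ and $b^p\equiv b$ for rational $a,b$ while $\omega^p=\omega^2=\overline\omega$ when $p\equiv2\pmod3$. Both $(p-2)/3$ and $(p+1)/3$ are integers, and a short exponent bookkeeping then gives
$$(c^2+3)^{(p-2)/3}\alpha^{(p+1)/3}=\alpha^{(2p-1)/3}\overline\alpha^{(p-2)/3}\equiv\alpha^{(2p-1)/3}\alpha^{p(p-2)/3}=\alpha^{(p^2-1)/3}\equiv\left(\frac{\alpha}{p}\right)_3\pmod p,$$
where I substituted $\overline\alpha^{(p-2)/3}\equiv\alpha^{p(p-2)/3}$ from the Frobenius identity. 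Replacing $\alpha$ by $\overline\alpha$ produces the stated formula for $\left(\frac{c-1-2\omega}{p}\right)_3$.

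Finally, the inverse relations $\left(\frac{c-1-2\omega}{p}\right)_3=\left(\frac{c+1+2\omega}{p}\right)_3^{-1}$ in both parts follow from multiplicativity together with $\left(\frac{c^2+3}{p}\right)_3=1$: for any rational integer $n$ coprime to $p$ one has $\left(\frac{n}{p}\right)_3=1$ in either case — for $p\equiv1\pmod3$ because $\left(\frac{n}{\overline\pi}\right)_3=\overline{\left(\frac{n}{\pi}\right)_3}$ forces the product to have modulus one and hence equal $1$, and for $p\equiv2\pmod3$ because $n^{(p^2-1)/3}=(n^{p-1})^{(p+1)/3}\equiv1$ by Fermat. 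I expect the only real obstacle to be organizational: keeping the exponents and the conjugation bookkeeping straight, and carefully justifying that both $\pi$ and $\overline\pi$ may be taken primary so that the Jacobi symbol factors. The two conceptual inputs — property (iii) in the split case and the Frobenius congruence $\alpha^p\equiv\overline\alpha$ in the inert case — are exactly what make the calculation go through.
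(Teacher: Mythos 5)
Your proof is correct, but there is nothing in the paper to compare it against: Lemma \ref{LR} is not proved by the authors at all --- it is quoted verbatim from Z.~H.~Sun's 1998 paper on cubic residues (their Lemma 2.2), so your argument supplies a self-contained verification where the paper simply cites. The two ingredients you isolate are exactly the right ones, and the computations check out. In the split case, writing $\alpha=c+1+2\omega$, the factorization $N(\alpha)=\alpha\overline{\alpha}=c^2+3$ together with property (iii) in the form $\left(\frac{\beta}{\overline{\pi}}\right)_3=\left(\frac{\overline{\beta}}{\pi}\right)_3^{-1}$ converts $\left(\frac{\alpha}{p}\right)_3=\left(\frac{\alpha}{\pi}\right)_3\left(\frac{\alpha}{\overline{\pi}}\right)_3$ into $\left(\frac{\alpha\overline{\alpha}^{2}}{\pi}\right)_3=\left(\frac{(c^2+3)(c-1-2\omega)}{\pi}\right)_3$, as required; your observation that $\overline{\pi}=(a-b)-b\omega$ is again primary (so the cubic Jacobi symbol over $p$ genuinely factors through $\pi$ and $\overline{\pi}$) is the one hypothesis that needed checking, and you check it. In the inert case, the Frobenius congruence $\alpha^{p}\equiv\overline{\alpha}\pmod{p}$ plus the exponent identity $(2p-1)/3+p(p-2)/3=(p^2-1)/3$ yields the Euler-type congruence $\left(\frac{\alpha}{p}\right)_3\equiv(c^2+3)^{(p-2)/3}\alpha^{(p+1)/3}\pmod{p}$, and your norm argument giving $\left(\frac{c^2+3}{p}\right)_3=1$ in both cases (conjugate-pairing when $p$ splits, Fermat when $p$ is inert) delivers the inverse relations. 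The only caveat, which you correctly flag yourself, is that the statement as printed omits the hypothesis $\gcd(c^2+3,p)=1$; this is harmless, since the inverse $\left(\frac{c+1+2\omega}{p}\right)_3^{-1}$ is meaningless otherwise, and every application in the paper (Theorems \ref{TR} and \ref{Sc}) explicitly assumes $c^2\not\equiv-3\pmod{p}$.
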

Now we prove the following criterion.
\begin{theorem} \label{TR}
Let $p$ be a prime, $p>3$, and let $c\in D_p$ with $c^2\not\equiv -3\pmod{p}$. Then
\begin{equation*}
c\sum_{k=(p+1)/2}^{\lfloor 2p/3\rfloor}{3k\choose k}\left(\frac{4}{9(c^2+3)}\right)^k\equiv
\begin{cases}
\ds
0 \pmod{p},
&  \text{if $c\in C_0(p)$;} \\
1 \pmod{p}, &   \text{if $c\in C_1(p)$;} \\
-1 \pmod{p}, &   \text{if $c\in C_2(p)$.}
\end{cases}
\end{equation*}
\end{theorem}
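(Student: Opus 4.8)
The plan is to specialize the fourth congruence of Theorem~\ref{T2}, namely (\ref{eq16}), at $t=\frac{4}{9(c^2+3)}$. A direct computation gives $\frac{27t}{2}-1=\frac{3-c^2}{c^2+3}=:X$, so everything reduces to evaluating $w_{\lfloor 2p/3\rfloor}(X)-w_{\lfloor p/3\rfloor}(X)$ modulo $p$. Writing $\alpha=X+\sqrt{X^2-1}$ as in (\ref{eq06}) and using $X^2-1=\frac{-12c^2}{(c^2+3)^2}$ together with $\sqrt{-3}=1+2\omega$, I would factor $\alpha=-\frac{c-1-2\omega}{c+1+2\omega}$. Setting $u=c+1+2\omega$ and $v=c-1-2\omega$, so that $v=\bar u$ and $uv=N(u)=c^2+3$, the hypothesis $c^2\not\equiv-3\pmod p$ guarantees that no prime of $\mathbb{Z}[\omega]$ above $p$ divides $u$ or $v$, so the cubic characters below are nonzero and invertible.

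Substituting $\alpha=-v/u$ into (\ref{eq06}) and simplifying, I would obtain the closed form $c\,w_n(X)=\frac{(-1)^n}{2}\cdot\frac{u^{2n+1}+v^{2n+1}}{(c^2+3)^n}$, which one checks directly for $n=0,1$. The parities of the floor indices are pinned down by $p\equiv\pm1\pmod 6$: when $p\equiv1\pmod3$ both $\lfloor p/3\rfloor$ and $\lfloor 2p/3\rfloor$ are even, while when $p\equiv2\pmod3$ both are odd, and this is precisely what makes the signs $(-1)^n$ combine cleanly in the difference.

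The heart of the argument is the reduction of $u^{2n+1}+v^{2n+1}$ modulo a prime of $\mathbb{Z}[\omega]$ over $p$. If $p\equiv1\pmod3$, I would write $p=\pi\bar\pi$ with $\pi\equiv2\pmod3$; since $\beta^p\equiv\beta\pmod\pi$ for every $\beta\in\mathbb{Z}[\omega]$, the powers $u^{(p-1)/3}\equiv\chi$ and $v^{(p-1)/3}\equiv\psi$ recover the cubic residue characters $\chi=\bigl(\frac{u}{\pi}\bigr)_3$ and $\psi=\bigl(\frac{v}{\pi}\bigr)_3$. Feeding the exponents $2\lfloor p/3\rfloor+1$ and $2\lfloor 2p/3\rfloor+1$ through these relations and reducing the factor $(c^2+3)^n$ via $(c^2+3)^{(p-1)/3}\equiv\chi\psi\pmod\pi$, the two terms collapse to $\tfrac12(\omega^j u+\omega^{-j}v)$ and $\tfrac12(\omega^{-j}u+\omega^j v)$, where Lemma~\ref{LR}(i) identifies $\chi\psi^2=\bigl(\frac{c+1+2\omega}{p}\bigr)_3=\omega^j$ for $c\in C_j(p)$. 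If instead $p\equiv2\pmod3$, then $p$ is inert, $\beta^p\equiv\bar\beta\pmod p$, and I would use Lemma~\ref{LR}(ii) to control $u^{(p+1)/3}$ and $v^{(p+1)/3}$, simplifying the powers of $c^2+3$ by Fermat's little theorem. In both cases the difference telescopes to $\pm\sqrt{-3}\,(\omega^j-\omega^{-j})$, which is the rational number $0$ when $j=0$ and $\pm3$ when $j\in\{1,2\}$, with the signs arranged so that multiplying by $\frac13\bigl(\frac p3\bigr)$ — where $\bigl(\frac p3\bigr)=1$ if $p\equiv1\pmod3$ and $-1$ if $p\equiv2\pmod3$ — produces exactly $0,1,-1$.

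The main obstacle I expect is the bookkeeping: tracking $2\lfloor p/3\rfloor+1$ and $2\lfloor 2p/3\rfloor+1$ modulo the relevant order, keeping the cube-root-of-unity identities (such as $\chi^{-1}=\chi^2$ and $\psi^{-2}=\psi$) straight, and correctly translating the character computed modulo $\pi$ into membership in $C_j(p)$ through Lemma~\ref{LR}. A secondary point worth flagging is that the individual congruences for $c\,w_{\lfloor p/3\rfloor}(X)$ and $c\,w_{\lfloor 2p/3\rfloor}(X)$ live in $\mathbb{Z}[\omega]/\pi$ and need not be rational, whereas only their difference $c\bigl(w_{\lfloor 2p/3\rfloor}(X)-w_{\lfloor p/3\rfloor}(X)\bigr)$ is rational; since a rational number is determined modulo $p$ by its reduction modulo $\pi$, this is exactly what legitimizes reading off the final value modulo $p$.
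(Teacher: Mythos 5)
Your proposal is correct and follows essentially the same route as the paper's own proof: specialize (\ref{eq16}) at $t=\frac{4}{9(c^2+3)}$, derive the closed form (\ref{tr2}) for $w_n\bigl(\frac{3-c^2}{3+c^2}\bigr)$ in terms of $(c-1-2\omega)^{2n+1}+(c+1+2\omega)^{2n+1}$, reduce modulo a prime of $\mathbb{Z}[\omega]$ above $p$ using the cubic residue character and Lemma \ref{LR} in the split and inert cases separately, and descend to a congruence modulo $p$ by rationality. The only slip is your closing remark that the individual terms need not be rational: since $c-1-2\omega=\overline{c+1+2\omega}$, each quantity $\frac{1}{2}\bigl(\omega^j u+\omega^{-j}\bar{u}\bigr)$ is of the form $z+\bar{z}$ and hence already rational, which is exactly why the paper can record the intermediate evaluations (\ref{tr9}) and (\ref{tr12}) as congruences modulo $p$ term by term; your weaker use of rationality, applied only to the difference, still suffices for the conclusion.
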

\begin{proof}
By (\ref{eq16}), we have
\begin{equation} \label{tr1}
\sum_{k=(p+1)/2}^{\lfloor 2p/3\rfloor}{3k\choose k}\left(\frac{4}{9(c^2+3)}\right)^k\equiv \frac{1}{3}\left(\frac{p}{3}\right)\left(w_{\lfloor\frac{2p}{3}\rfloor}\biggl(\frac{3-c^2}{3+c^2}\biggr)-w_{\lfloor\frac{p}{3}\rfloor}\biggl(\frac{3-c^2}{3+c^2}\biggr)\right) \pmod{p}.
\end{equation}
  From (\ref{eq06}) it easily follows that
\begin{equation} \label{tr2}
w_{n}\biggl(\frac{3-c^2}{3+c^2}\biggr)=\frac{(-1)^n}{2c(c^2+3)^n}
\left((c-1-2\omega)^{2n+1}+(c+1+2\omega)^{2n+1}\right).
\end{equation}
If $p\equiv 1$ (mod $3$), then $p$ splits into the product of primes in ${\mathbb Z}[\omega]$, $p=\pi\overline{\pi}$ with $\pi\equiv 2$ (mod~$3$) and, by (\ref{tr2}), we have
\begin{equation} \label{tr3}
w_{\lfloor\frac{p}{3}\rfloor}\biggl(\frac{3-c^2}{3+c^2}\biggr)=
\frac{1}{2c(c^2+3)^{(p-1)/3}}\left((c-1-2\omega)^{2(p-1)/3+1}+(c+1+2\omega)^{2(p-1)/3+1}\right).
\end{equation}
By (\ref{tr0}) and Lemma \ref{LR}, we have
\begin{equation} \label{tr4}
(c^2+3)^{2(p-1)/3}(c-1-2\omega)^{2(p-1)/3}\equiv \left(\frac{(c^2+3)(c-1-2\omega)}{\pi}\right)^2=\left(\frac{c+1+2\omega}{p}\right)_3^2 \pmod{\pi}
\end{equation}
and
\begin{equation} \label{tr5}
(c^2+3)^{2(p-1)/3}(c+1+2\omega)^{2(p-1)/3}\equiv \left(\frac{(c^2+3)(c+1+2\omega)}{\pi}\right)^2\!=\left(\frac{c+1+2\omega}{p}\right)_3^{-2} \!\pmod{\pi}.
\end{equation}
Substituting (\ref{tr4}) and (\ref{tr5}) into (\ref{tr3}), we get
$$
w_{\lfloor\frac{p}{3}\rfloor}\biggl(\frac{3-c^2}{3+c^2}\biggr)\!\equiv\!
\frac{1}{2c}\left(\!(c-1-2\omega)\!\left(\frac{c+1+2\omega}{p}\right)_3^2\!\!+(c+1+2\omega)\left(\frac{c+1+2\omega}{p}\right)_3^{-2}\right)\!\!\! \pmod{\pi}
$$
and therefore,
\begin{equation} \label{tr6}
w_{\lfloor\frac{p}{3}\rfloor}\biggl(\frac{3-c^2}{3+c^2}\biggr)\equiv
\begin{cases}
\ds
1 \pmod{\pi},
&  \text{if $c\in C_0(p)$;} \\[2pt]
-\frac{3+c}{2c} \pmod{\pi}, &   \text{if $c\in C_1(p)$;} \\[2pt]
\frac{3-c}{2c} \pmod{\pi}, &   \text{if $c\in C_2(p)$.}
\end{cases}
\end{equation}
Since both sides of the above congruence are rational, the congruence is also true modulo $p=\pi\overline{\pi}$.
Similarly, if $p\equiv 2$  (mod $3$), then
\begin{equation} \label{tr7}
w_{\lfloor\frac{p}{3}\rfloor}\biggl(\frac{3-c^2}{3+c^2}\biggr)=
\frac{-1}{2c(c^2+3)^{(p-2)/3}}\left((c-1-2\omega)^{2(p+1)/3-1}+
(c+1+2\omega)^{2(p+1)/3-1} \right).
\end{equation}
Now, by Lemma \ref{LR}, we have
$$
(c+1+2\omega)^{2(p+1)/3}\equiv (c^2+3)^{-2(p-2)/3}\left(\frac{c+1+2\omega}{p}\right)_3^2 \pmod{p}
$$
and
$$
(c-1-2\omega)^{2(p+1)/3}\equiv (c^2+3)^{-2(p-2)/3}\left(\frac{c+1+2\omega}{p}\right)_3^{-2} \pmod{p}.
$$
Substituting the above congruences into (\ref{tr7}) and noticing that
$c^2+3=(c+1+2\omega)(c-1-2\omega)$, we get
\begin{equation*}
w_{\lfloor\frac{p}{3}\rfloor}\biggl(\frac{3-c^2}{3+c^2}\biggr)\equiv
\frac{-1}{2c}\left((c+1+2\omega)\left(\frac{c+1+2\omega}{p}\right)_3^{-2}+
(c-1-2\omega)\left(\frac{c+1+2\omega}{p}\right)_3^{2} \right)
\end{equation*}
and therefore,
\begin{equation} \label{tr8}
w_{\lfloor\frac{p}{3}\rfloor}\biggl(\frac{3-c^2}{3+c^2}\biggr)\equiv
\begin{cases}
\ds
-1 \pmod{p},
&  \text{if $c\in C_0(p)$;} \\[2pt]
\frac{3+c}{2c} \pmod{p}, &   \text{if $c\in C_1(p)$;} \\[2pt]
-\frac{3-c}{2c} \pmod{p}, &   \text{if $c\in C_2(p)$.}
\end{cases}
\end{equation}
Combining congruences (\ref{tr6}) and (\ref{tr8}), we obtain that for all primes $p>3$,
\begin{equation} \label{tr9}
\left(\frac{p}{3}\right)w_{\lfloor\frac{p}{3}\rfloor}\biggl(\frac{3-c^2}{3+c^2}\biggr)\equiv
\begin{cases}
\ds
1 \pmod{p},
&  \text{if $c\in C_0(p)$;} \\[2pt]
-\frac{3+c}{2c} \pmod{p}, &   \text{if $c\in C_1(p)$;} \\[2pt]
\frac{3-c}{2c} \pmod{p}, &   \text{if $c\in C_2(p)$.}
\end{cases}
\end{equation}
Applying the similar argument for evaluation of
$w_{\lfloor\frac{2p}{3}\rfloor}(\frac{3-c^2}{3+c^2})$, we see that if $p\equiv 1$ (mod $3$), then $2p\equiv 2$ (mod $3$) and therefore,
\begin{equation*}
\begin{split}
w_{\lfloor\frac{2p}{3}\rfloor}&\biggl(\frac{3-c^2}{3+c^2}\biggr)=
\frac{1}{2c(c^2+3)^{2(p-1)/3}}\left((c-1-2\omega)^{4(p-1)/3+1}+
(c+1+2\omega)^{4(p-1)/3+1} \right)\\
&\equiv \frac{1}{2c}\left((c-1-2\omega)\left(\frac{c+1+2\omega}{p}\right)_3+(c+1+2\omega)\left(\frac{c+1+2\omega}{p}\right)_3^{-1} \right) \pmod{\pi},
\end{split}
\end{equation*}
which implies
\begin{equation} \label{tr10}
w_{\lfloor\frac{2p}{3}\rfloor}\biggl(\frac{3-c^2}{3+c^2}\biggr)\equiv
\begin{cases}
\ds
1 \pmod{p},
& \text{if $c\in C_0(p)$;} \\[2pt]
\frac{3-c}{2c} \pmod{p}, &   \text{if $c\in C_1(p)$;} \\[2pt]
-\frac{3+c}{2c} \pmod{p}, &   \text{if $c\in C_2(p)$.}
\end{cases}
\end{equation}
If $p\equiv 2$ (mod $3$), then $2p\equiv 1$ (mod $3$) and we have
\begin{equation*}
\begin{split}
w_{\lfloor\frac{2p}{3}\rfloor}&\biggl(\frac{3-c^2}{3+c^2}\biggr)=
\frac{-1}{2c(c^2+3)^{(2p-1)/3}}\left(
(c+1+2\omega)^{4(p+1)/3-1}+(c-1-2\omega)^{4(p+1)/3-1}\right)\\
&\equiv \frac{-1}{2c}\left((c-1-2\omega)\left(\frac{c+1+2\omega}{p}\right)_3+(c+1+2\omega)\left(\frac{c+1+2\omega}{p}\right)_3^{-1} \right) \pmod{p},
\end{split}
\end{equation*}
and therefore,
\begin{equation} \label{tr11}
w_{\lfloor\frac{2p}{3}\rfloor}\biggl(\frac{3-c^2}{3+c^2}\biggr)\equiv
\begin{cases}
\ds
-1 \pmod{p},
&  \text{if $c\in C_0(p)$;} \\[2pt]
-\frac{3-c}{2c} \pmod{p}, &   \text{if $c\in C_1(p)$;} \\[2pt]
\frac{3+c}{2c} \pmod{p}, &   \text{if $c\in C_2(p)$.}
\end{cases}
\end{equation}
Combining (\ref{tr10}) and (\ref{tr11}), we see that for all primes $p>3$,
\begin{equation} \label{tr12}
\left(\frac{p}{3}\right)w_{\lfloor\frac{2p}{3}\rfloor}\biggl(\frac{3-c^2}{3+c^2}\biggr)\equiv
\begin{cases}
\ds
1 \pmod{p},
&  \text{if $c\in C_0(p)$;} \\[2pt]
\frac{3-c}{2c} \pmod{p}, &   \text{if $c\in C_1(p)$;} \\[2pt]
-\frac{3+c}{2c} \pmod{p}, &   \text{if $c\in C_2(p)$.}
\end{cases}
\end{equation}
Now, by (\ref{tr9}), (\ref{tr12})
and (\ref{tr1}), the congruence of the theorem easily follows.
\end{proof}

  From Theorem \ref{TR} and criterion (\ref{cr}) we deduce the following result
confirming a question of Z.\ H.\ Sun~\cite[Conj.\ 2.1]{ZHS13}.
\begin{theorem} \label{MqL}
Let $q$ be a prime, $q\equiv 1 \pmod{3}$ and so $4q=L^2+27M^2$ with $L, M\in {\mathbb Z}$ and $L\equiv 1\pmod{3}$. Let $p$ be a prime with
$p\ne 2, 3, q$, and let $p\nmid LM$. Then
\begin{equation*}
\sum_{k=(p+1)/2}^{\lfloor 2p/3\rfloor}{3k\choose k}\frac{M^{2k}}{q^k}\equiv
\begin{cases}
\ds
0 \pmod{p},
&  \text{if $p^{\frac{q-1}{3}}\equiv 1\pmod{q}$;} \\[3pt]
\pm\frac{3M}{L} \pmod{p}, &   \text{if $p^{\frac{q-1}{3}}\equiv \frac{-1\pm 9M/L}{2}$} \pmod{q}
\end{cases}
\end{equation*}
and
\begin{equation*}
\sum_{k=(p+1)/2}^{\lfloor 2p/3\rfloor}{3k\choose k}\frac{L^{2k}}{(27q)^k}\equiv
\begin{cases}
\ds
0 \pmod{p},
&  \text{if $p^{\frac{q-1}{3}}\equiv 1\pmod{q}$;} \\[3pt]
\pm\frac{L}{9M} \pmod{p}, &  \text{if $p^{\frac{q-1}{3}}\equiv \frac{-1\pm L/(3M)}{2}$} \pmod{q}.
\end{cases}
\end{equation*}
\end{theorem}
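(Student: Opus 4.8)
The plan is to derive both congruences by specializing Theorem~\ref{TR} at two explicit values of $c$ and then converting its cubic-class conditions modulo $p$ into the stated conditions on $p^{(q-1)/3}\pmod q$ through the criterion (\ref{cr}). The first observation is that (\ref{cr}) is symmetric in the names of the two primes, so with the roles of $p$ and $q$ interchanged it reads, for $c=L/(3M)$ and $j\in\{0,1,2\}$,
\[
L/(3M)\in C_j(p)\quad\Longleftrightarrow\quad p^{(q-1)/3}\equiv\bigl((-1-L/(3M))/2\bigr)^j\pmod q.
\]
The bridge between the two formulations is the identity $4q=L^2+27M^2$, which gives $L^2\equiv-27M^2\pmod q$; since $q\nmid LM$ this yields $L/(3M)\equiv-9M/L\pmod q$ and $(L/(3M))^2\equiv-3\pmod q$, so that $\theta:=(-1-L/(3M))/2$ is a primitive cube root of unity modulo $q$ with $\theta^2=(-1+L/(3M))/2$.

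For the first sum I would take $c=L/(3M)$. Then $c^2+3=(L^2+27M^2)/(9M^2)=4q/(9M^2)$, hence $4/(9(c^2+3))=M^2/q$, and the hypotheses $p\ne q$, $p\nmid LM$, $p\ne2,3$ ensure $c\in D_p$ and $c^2\not\equiv-3\pmod p$ (indeed $p\nmid 4q$). Theorem~\ref{TR} then gives
\[
\sum_{k=(p+1)/2}^{\lfloor 2p/3\rfloor}{3k\choose k}\frac{M^{2k}}{q^k}\equiv\frac{3M}{L}\cdot
\begin{cases}0,& L/(3M)\in C_0(p),\\ 1,& L/(3M)\in C_1(p),\\ -1,& L/(3M)\in C_2(p),\end{cases}\pmod p.
\]
By the relabelled (\ref{cr}) these three cases correspond respectively to $p^{(q-1)/3}\equiv1,\ \theta,\ \theta^2\pmod q$, and rewriting $\theta\equiv(-1+9M/L)/2$, $\theta^2\equiv(-1-9M/L)/2$ via $L/(3M)\equiv-9M/L$ produces exactly the first claimed congruence.

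For the second sum I would take $c=9M/L$, so that $c^2+3=(81M^2+3L^2)/L^2=12q/L^2$ and $4/(9(c^2+3))=L^2/(27q)$; the same non-vanishing checks apply since $p\nmid12q$. Theorem~\ref{TR} now gives the analogous statement with prefactor $L/(9M)$ and with $9M/L$ in place of $L/(3M)$. To reuse (\ref{cr}), which is phrased for $L/(3M)$, I would exploit that $(L/(3M))\cdot(-9M/L)=-3$: property (iv) of the sets $C_j$ gives $9M/L\in C_0(p)\Leftrightarrow L/(3M)\in C_0(p)$, while properties (ii)--(iii) convert the sign change into $9M/L\in C_1(p)\Leftrightarrow L/(3M)\in C_2(p)$ and $9M/L\in C_2(p)\Leftrightarrow L/(3M)\in C_1(p)$. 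Feeding this into the relabelled (\ref{cr}) and leaving the conditions written through $L/(3M)$ yields the second claimed congruence.

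The computations themselves are routine; the step that requires genuine care is the sign bookkeeping, in particular the congruence $L/(3M)\equiv-9M/L\pmod q$ that reconciles Theorem~\ref{TR} (whose output is indexed by $C_j(p)$) with the statement's conditions on $p^{(q-1)/3}\pmod q$, together with the correct use of properties (ii)--(iv) in the second case. I would also note that $M$ is determined only up to sign by $4q=L^2+27M^2$; replacing $M$ by $-M$ simultaneously interchanges the two nonzero rows of both the conditions and the answers, so the stated pairing of the $\pm$ signs is well defined.
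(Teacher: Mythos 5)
Your proposal is correct and follows essentially the same route as the paper: specialize Theorem~\ref{TR} at $c=L/(3M)$ and at $c=\pm 9M/L$, then translate the resulting conditions $c\in C_j(p)$ through the relabelled criterion (\ref{cr}) together with the congruence $L/(3M)\equiv -9M/L\pmod{q}$. The only cosmetic difference is that the paper takes $c=-9M/L$ in the second case, so that property (iv) alone suffices, whereas your choice $c=+9M/L$ additionally invokes properties (ii)--(iii); both sign bookkeepings lead to the stated pairing of the $\pm$ cases.
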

\begin{proof}
To prove the first congruence, we put $c=\frac{L}{3M}$ in Theorem \ref{TR}. Then $c(c^2+3)\not\equiv 0$ (mod $p$), $\frac{4}{9(c^2+3)}=\frac{M^2}{q}$ and we have
\begin{equation*}
\sum_{k=(p+1)/2}^{\lfloor 2p/3\rfloor}{3k\choose k}\frac{M^{2k}}{q^k}\equiv
\begin{cases}
\ds
0 \pmod{p},
&  \text{if $L/(3M)\in C_0(p)$;} \\[2pt]
\frac{3M}{L} \pmod{p}, &   \text{if $L/(3M)\in C_1(p)$;} \\[2pt]
-\frac{3M}{L} \pmod{p}, &   \text{if $L/(3M)\in C_2(p)$.}
\end{cases}
\end{equation*}
Now applying (\ref{cr}) and taking into account that $L/(3M)\equiv -9M/L$ (mod $q$), we get the result.

To prove the second congruence, we put $c=-9M/L$ in Theorem \ref{TR}. Then $c(c^2+3)\not\equiv 0$ (mod $p$), $\frac{4}{9(c^2+3)}=\frac{L^2}{27q}$ and we have
\begin{equation} \label{a}
\sum_{k=(p+1)/2}^{\lfloor 2p/3\rfloor}{3k\choose k}\frac{L^{2k}}{(27q)^k}\equiv
\begin{cases}
\ds
0 \pmod{p},
&  \text{if $-9M/L\in C_0(p)$;} \\[2pt]
-\frac{L}{9M} \pmod{p}, &   \text{if $-9M/L\in C_1(p)$;} \\[2pt]
\frac{L}{9M} \pmod{p}, &   \text{if $-9M/L\in C_2(p)$.}
\end{cases}
\end{equation}
By ${\rm (iv)},$ we know that $-9M/L\in C_j(p)$ if and only if
$L/(3M)\in C_j(p)$. This together with (\ref{a}) and (\ref{cr})
implies the required congruence.
\end{proof}
From Corollary \ref{C2} and formulas (\ref{tr9})
and (\ref{tr12}) we get the following statement.

\begin{theorem} \label{CaT}
Let $p$ be a prime, $p>3$, and let $c\in D_p$ with $c^2\not\equiv -3\pmod{p}$. Then
\begin{equation*}
\sum_{k=0}^{p-1}{3k\choose k}\left(\frac{4}{9(c^2+3)}\right)^k\equiv
\begin{cases}
\ds
1 \pmod{p},
&  \text{if $c\in C_0(p)$;} \\
-\frac{1+c}{2c} \pmod{p}, &   \text{if $c\in C_1(p)$;} \\
\frac{1-c}{2c} \pmod{p}, &   \text{if $c\in C_2(p)$}
\end{cases}
\end{equation*}
and
\begin{equation*}
\sum_{k=0}^{p-1}C_k^{(2)}\left(\frac{4c^2}{27(c^2+3)}\right)^k\equiv
\begin{cases}
\ds
1 \pmod{p},
&  \text{if $c\in C_0(p)$;} \\
-\frac{9+c}{2c} \pmod{p}, &   \text{if $c\in C_1(p)$;} \\
\frac{9-c}{2c} \pmod{p}, &   \text{if $c\in C_2(p)$.}
\end{cases}
\end{equation*}
\end{theorem}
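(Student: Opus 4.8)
The plan is to derive both congruences directly from Corollary~\ref{C2} by choosing the free parameter $t$ so that the polynomials $w_{\lfloor p/3\rfloor}$ and $w_{\lfloor 2p/3\rfloor}$ get evaluated exactly at the point $\frac{3-c^2}{3+c^2}$, for which formulas (\ref{tr9}) and (\ref{tr12}) already record the values modulo $p$ in terms of the classes $C_0(p), C_1(p), C_2(p)$. Thus the entire proof reduces to a specialization followed by a three-way case simplification; no new congruence for the Jacobi-type sequence $w_n$ needs to be established.

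First I would pin down the correct specialization for each sum. In the $\binom{3k}{k}$-formula of Corollary~\ref{C2} the argument of $w$ is $27t/2-1$, so I solve $27t/2-1=\frac{3-c^2}{3+c^2}$ and obtain $t=\frac{4}{9(c^2+3)}$. In the $C_k^{(2)}$-formula the argument is $1-27t/2$, so I solve $1-27t/2=\frac{3-c^2}{3+c^2}$ and obtain $t=\frac{4c^2}{27(c^2+3)}$. These are precisely the coefficients appearing in the statement, and the hypothesis $c^2\not\equiv-3\pmod p$ guarantees that $3+c^2$ is a unit modulo $p$, so both values lie in $D_p$ and the substitution is legitimate.

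Next I would substitute (\ref{tr9}) and (\ref{tr12}) into the right-hand sides of Corollary~\ref{C2}. Writing the factor $\bigl(\frac{p}{3}\bigr)$ inside (using $\bigl(\frac{p}{3}\bigr)^2=1$) lets me replace $\bigl(\frac{p}{3}\bigr)w_{\lfloor p/3\rfloor}$ and $\bigl(\frac{p}{3}\bigr)w_{\lfloor 2p/3\rfloor}$ by their piecewise values, after which each congruence becomes three short arithmetic checks. For example, for $\binom{3k}{k}$ with $c\in C_1(p)$ one finds $\frac13\bigl(2\cdot(-\frac{3+c}{2c})+\frac{3-c}{2c}\bigr)=\frac13\cdot(-\frac{3(1+c)}{2c})=-\frac{1+c}{2c}$, and for $C_k^{(2)}$ with $c\in C_2(p)$ one finds $2\cdot\frac{3-c}{2c}-(-\frac{3+c}{2c})=\frac{9-c}{2c}$; in both formulas the class $C_0(p)$ collapses to $1$, and the two remaining cases are entirely analogous.

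I expect no genuine conceptual obstacle, since formulas (\ref{tr9}), (\ref{tr12}) and Corollary~\ref{C2} are already in hand; the only real work is the elementary fraction arithmetic, which must be done carefully so that the $\frac{1}{2c}$-terms combine to the stated forms—note in particular that the extra factor $\frac13$ in the $\binom{3k}{k}$-formula is exactly what turns $-\frac{3(1+c)}{2c}$ into $-\frac{1+c}{2c}$. The one point deserving a remark is that (\ref{tr9})--(\ref{tr12}) were obtained from the non-degenerate branch of (\ref{eq06}) and hence presuppose $c\not\equiv0\pmod p$ (equivalently $\tfrac{3-c^2}{3+c^2}\not\equiv1$); since the $C_1(p)$ and $C_2(p)$ right-hand sides already contain $1/c$, this entails no loss of generality, and the value $c\equiv0$ simply falls outside the range of the stated formulas.
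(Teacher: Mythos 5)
Your proposal is exactly the paper's own proof: Theorem~\ref{CaT} is derived there precisely by specializing $t$ in Corollary~\ref{C2} to $\frac{4}{9(c^2+3)}$ and $\frac{4c^2}{27(c^2+3)}$ and substituting the values (\ref{tr9}) and (\ref{tr12}), and your three-way arithmetic is correct. One caveat on your closing remark: since $0\in C_0(p)$ (properties (ii)--(iii) of the sets $C_j$ force this), the value $c\equiv 0\pmod p$ does not ``fall outside the range of the stated formulas''---the $C_0(p)$ line would then wrongly predict $1$ for the binomial sum, whereas (\ref{427}) gives $1/9$---so $c\not\equiv 0\pmod p$ is a genuine implicit hypothesis, left equally tacit in the paper's statement and one-line proof.
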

From Theorem \ref{CaT} and criterion (\ref{cr}) we get the following congruences.
\begin{theorem} \label{MqL2}
Let $q$ be a prime, $q\equiv 1 \pmod{3}$ and so $4q=L^2+27M^2$ with $L, M\in {\mathbb Z}$ and $L\equiv 1\pmod{3}$. Let $p$ be a prime with
$p\ne 2, 3, q$, and let $p\nmid LM$. Then
\begin{align*}
\sum_{k=0}^{p-1}{3k\choose k}\frac{M^{2k}}{q^k}&\equiv
\begin{cases}
\ds
1 \pmod{p},
&  \text{if $p^{\frac{q-1}{3}}\equiv 1\pmod{q}$;} \\[3pt]
\frac{\pm 3M-L}{2L} \pmod{p}, &   \text{if $p^{\frac{q-1}{3}}\equiv \frac{-1\pm L/(3M)}{2}$} \pmod{q},
\end{cases}
\\[5pt]
\sum_{k=0}^{p-1}{3k\choose k}\frac{L^{2k}}{(27q)^k}&\equiv
\begin{cases}
\ds
1 \pmod{p},
&  \text{if $p^{\frac{q-1}{3}}\equiv 1\pmod{q}$;} \\[3pt]
\frac{\pm L-9M}{18M} \pmod{p}, &  \text{if $p^{\frac{q-1}{3}}\equiv \frac{-1\pm 9M/L}{2}$} \pmod{q}
\end{cases}
\end{align*}
and
\begin{align*}
\sum_{k=0}^{p-1}C_k^{(2)}\frac{M^{2k}}{q^k}&\equiv
\begin{cases}
\ds
1 \pmod{p},
&  \text{if $p^{\frac{q-1}{3}}\equiv 1\pmod{q}$;} \\[3pt]
\frac{\pm L-M}{2M} \pmod{p}, &   \text{if $p^{\frac{q-1}{3}}\equiv \frac{-1\pm 9M/L}{2}$} \pmod{q},
\end{cases}
\\[5pt]
\sum_{k=0}^{p-1}C_k^{(2)}\frac{L^{2k}}{(27q)^k}&\equiv
\begin{cases}
\ds
1 \pmod{p},
&  \text{if $p^{\frac{q-1}{3}}\equiv 1\pmod{q}$;} \\[3pt]
\frac{\pm 27M-L}{2L} \pmod{p}, &   \text{if $p^{\frac{q-1}{3}}\equiv \frac{-1\pm L/(3M)}{2}$} \pmod{q}.
\end{cases}
\end{align*}
\end{theorem}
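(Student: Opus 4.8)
The plan is to derive all four congruences from Theorem~\ref{CaT} by choosing the free parameter $c$ so that the base of the sum in Theorem~\ref{CaT} becomes the desired one, and then to translate the membership condition $c\in C_j(p)$ into a condition on $p^{(q-1)/3}\bmod q$ by means of criterion~(\ref{cr}) applied with the roles of $p$ and $q$ interchanged.

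First I would record the substitutions. Since $4q=L^2+27M^2$, taking $c=L/(3M)$ gives $c^2+3=\tfrac{4q}{9M^2}$, whence $\tfrac{4}{9(c^2+3)}=\tfrac{M^2}{q}$ and $\tfrac{4c^2}{27(c^2+3)}=\tfrac{L^2}{27q}$; taking $c=-9M/L$ gives $c^2+3=\tfrac{12q}{L^2}$, whence $\tfrac{4}{9(c^2+3)}=\tfrac{L^2}{27q}$ and $\tfrac{4c^2}{27(c^2+3)}=\tfrac{M^2}{q}$. Thus I would apply the first formula of Theorem~\ref{CaT} with $c=L/(3M)$ for the sum $\sum\binom{3k}{k}M^{2k}/q^k$ and with $c=-9M/L$ for $\sum\binom{3k}{k}L^{2k}/(27q)^k$, and the second formula with $c=-9M/L$ for $\sum C_k^{(2)}M^{2k}/q^k$ and with $c=L/(3M)$ for $\sum C_k^{(2)}L^{2k}/(27q)^k$. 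In each case $c\in D_p$ and $c^2\not\equiv-3\pmod p$, because $p\nmid LM$ and $4q\not\equiv0\pmod p$, so Theorem~\ref{CaT} applies; simplifying the branch values $-\tfrac{1+c}{2c},\ \tfrac{1-c}{2c}$ and $-\tfrac{9+c}{2c},\ \tfrac{9-c}{2c}$ at these $c$ produces exactly the four pairs of expressions $\tfrac{\pm3M-L}{2L}$, $\tfrac{\pm L-9M}{18M}$, $\tfrac{\pm L-M}{2M}$, $\tfrac{\pm27M-L}{2L}$ in the statement.

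Next I would invoke criterion~(\ref{cr}) for the prime $q\equiv1\pmod3$: membership $L/(3M)\in C_j(p)$ holds iff $p^{(q-1)/3}\equiv\bigl((-1-L/(3M))/2\bigr)^{j}\pmod q$. Since $(L/(3M))^2\equiv-3\pmod q$, the element $\rho:=(-1-L/(3M))/2$ satisfies $\rho\bar\rho=1$ with $\bar\rho=(-1+L/(3M))/2$, so $\rho^2\equiv(-1+L/(3M))/2\pmod q$; hence $j=0,1,2$ correspond to
\[
p^{(q-1)/3}\equiv 1,\quad \frac{-1-L/(3M)}{2},\quad \frac{-1+L/(3M)}{2}\pmod q
\]
respectively. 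For the two sums where I chose $c=-9M/L$, I would first use property~(iv) (with $\tfrac{L}{3M}\cdot\tfrac{-9M}{L}=-3$) to replace $-9M/L\in C_j(p)$ by $L/(3M)\in C_j(p)$, and then use $L/(3M)\equiv-9M/L\pmod q$ to rewrite the resulting branch hypotheses in the $\tfrac{-1\pm9M/L}{2}$ form displayed in the theorem; for the two sums with $c=L/(3M)$ the hypotheses already appear in the $\tfrac{-1\pm L/(3M)}{2}$ form directly.

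The main obstacle is purely bookkeeping: keeping the $\pm$ signs aligned across three simultaneous correspondences — the assignment $C_1\leftrightarrow\rho$ and $C_2\leftrightarrow\rho^2$; the swap $-9M/L\leftrightarrow L/(3M)$ inside the sets $C_j(p)$ via property~(iv); and the identity $L/(3M)\equiv-9M/L\pmod q$ — so that the branch value and the branch hypothesis carry matching signs in each final display. The sign dictionary $j=1\mapsto$ ``$-$'' or ``$+$'' flips between the $L/(3M)$ and the $9M/L$ presentations, so I would verify it separately for each of the four sums; once this is fixed, the four congruences follow immediately.
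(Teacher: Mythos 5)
Your proposal is correct and is essentially the paper's own proof: the paper likewise substitutes $c=L/(3M)$ and $c=-9M/L$ into Theorem~\ref{CaT} and then follows the reasoning of Theorem~\ref{MqL}, i.e., property (iv) together with criterion (\ref{cr}) (with the roles of $p$ and $q$ interchanged) and the identity $L/(3M)\equiv -9M/L \pmod q$. Your write-up just makes explicit the branch-value computations and the sign bookkeeping that the paper leaves to the reader.
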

\begin{proof}
Substituting consequently $c=L/(3M)$ and then $c=-9M/L$ in Theorem \ref{CaT} and following the same line of reasoning as in the proof of Theorem \ref{MqL}, we get the above congruences.
\end{proof}
In particular, setting $q=7, 19, 31, 37$ in Theorem \ref{MqL2}, we get the following numerical congruences.
\begin{corollary}
Let $p$ be a prime, $p\ne 2, 3, 7$. Then
\begin{align*}
\sum_{k=0}^{p-1}{3k\choose k}\frac{1}{189^k}&\equiv
\begin{cases}
\ds
-2 \pmod{p},
&  \text{if $p\equiv \pm 2 \pmod{7}$;} \\
\ds 1 \pmod{p}, &  \text{otherwise,}
\end{cases}
\\[7pt]
\sum_{k=0}^{p-1}\frac{C_k^{(2)}}{189^k}&
\equiv
\begin{cases}
\ds
1 \pmod{p},
&  \text{if $p\equiv \pm 1 \pmod{7}$;} \\
\ds -14 \pmod{p}, &   \text{if $p\equiv \pm 2 \pmod{7}$;} \\
\ds 13 \pmod{p}, &   \text{if $p\equiv \pm 3 \pmod{7}$.}
\end{cases}
\end{align*}
\end{corollary}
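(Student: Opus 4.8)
The plan is to obtain both congruences as the single specialization $q = 7$ of Theorem~\ref{MqL2}, so that only routine bookkeeping is involved: determining the parameters $L$ and $M$, translating the cubic-power hypothesis into a condition on $p \bmod 7$, and simplifying the closed-form constants. First I would solve $4q = L^2 + 27M^2$ at $q = 7$; the unique solution with $L \equiv 1 \pmod{3}$ is $L = 1$, $M = \pm 1$, and I fix $M = 1$. Then $M^2/q = 1/7$ and $L^2/(27q) = 1/189$, so the two sums treated here are exactly the $q = 7$ instances of $\sum_k \binom{3k}{k} M^{2k}/q^k$ and $\sum_k C_k^{(2)} L^{2k}/(27q)^k$ appearing in Theorem~\ref{MqL2}, and each congruence is read off from the appropriate branch of that theorem.

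The one substantive step is converting the hypotheses of Theorem~\ref{MqL2}, which are phrased through $p^{(q-1)/3} \bmod q$, into the flat conditions on $p \bmod 7$ recorded in the statement. Since $(q-1)/3 = 2$, the controlling quantity is $p^2 \bmod 7$, and squaring on $(\mathbb{Z}/7\mathbb{Z})^{\times}$ sends $\{\pm 1\} \mapsto 1$, $\{\pm 2\} \mapsto 4$, and $\{\pm 3\} \mapsto 2$. Using $L/(3M) \equiv 1/3 \equiv 5 \pmod{7}$ I would compute the two thresholds $\tfrac{-1 + L/(3M)}{2} \equiv 2$ and $\tfrac{-1 - L/(3M)}{2} \equiv 4 \pmod{7}$. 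Hence $p \equiv \pm 1 \pmod{7}$ selects the trivial branch $p^{(q-1)/3} \equiv 1$, while $p \equiv \pm 3$ and $p \equiv \pm 2 \pmod 7$ select the two nontrivial branches attached to the $+$ and $-$ signs, respectively.

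Finally I would evaluate the closed-form constants at $L = M = 1$. For the $\binom{3k}{k}$ sum the nontrivial values are $\tfrac{\pm 3M - L}{2L} = \tfrac{\pm 3 - 1}{2}$, namely $1$ on the $+$ branch and $-2$ on the $-$ branch; combined with the trivial value $1$ this gives $-2$ when $p \equiv \pm 2 \pmod 7$ and $1$ otherwise. For the $C_k^{(2)}$ sum the nontrivial values are $\tfrac{\pm 27M - L}{2L} = \tfrac{\pm 27 - 1}{2}$, namely $13$ and $-14$, which together with the trivial value $1$ produce $1, -14, 13$ for $p \equiv \pm 1, \pm 2, \pm 3 \pmod 7$. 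I expect the only delicate point to be keeping the sign in $\tfrac{-1 \pm L/(3M)}{2}$ correlated with the sign in the output constant throughout; since every step reduces to an explicit residue computation modulo $7$, there is no conceptual obstacle, and a single numerical check (for instance $p = 5 \equiv \pm 2 \pmod 7$) confirms the assignment of values to classes.
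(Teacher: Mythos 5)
Your overall strategy---specializing Theorem~\ref{MqL2} at $q=7$ with $L=1$, $M=1$---is exactly the paper's proof, and your treatment of the second sum is correct: $\sum_{k=0}^{p-1}C_k^{(2)}/189^k$ is the $L^{2k}/(27q)^k$ instance, your thresholds $\frac{-1\pm L/(3M)}{2}\equiv 2,4\pmod{7}$ combine with $p^2\equiv 1,4,2\pmod 7$ for $p\equiv\pm1,\pm2,\pm3$ to give $1$, $-14$, $13$ as stated. But your identification of the first sum fails, and it is the crux. You record $M^2/q=1/7$ and then declare that $\sum_{k=0}^{p-1}\binom{3k}{k}/189^k$ ``is exactly'' the instance $\sum_k\binom{3k}{k}M^{2k}/q^k$ of Theorem~\ref{MqL2}; it is not, since $1/189\neq 1/7$ --- this is an internal inconsistency in your own writeup. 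The printed sum is the $L^{2k}/(27q)^k$ instance of the $\binom{3k}{k}$ family, and the branch of Theorem~\ref{MqL2} that actually applies to it (thresholds $\frac{-1\pm 9M/L}{2}\equiv 4,2\pmod 7$) yields the values $\frac{\pm L-9M}{18M}=\frac{\pm 1-9}{18}$, i.e., $-4/9$ for $p\equiv\pm2\pmod 7$ and $-5/9$ for $p\equiv\pm3\pmod 7$, not $-2$ and $1$. The constants you derived, $\frac{\pm 3M-L}{2L}=1,-2$, belong to a different sum, namely $\sum_{k=0}^{p-1}\binom{3k}{k}/7^k$.

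What makes the situation delicate is that the corollary as printed is itself false, and your misidentification silently ``repairs'' it. Check $p=5$ (so $p\equiv-2\pmod 7$) directly: $\binom{3k}{k}\equiv 1,3,0,4,0\pmod 5$ for $k=0,\dots,4$ and $189\equiv-1\pmod 5$, so $\sum_{k=0}^{4}\binom{3k}{k}/189^k\equiv 1-3-4\equiv-1\equiv-4/9\pmod 5$, which is not $-2$; whereas $\sum_{k=0}^{4}\binom{3k}{k}/7^k\equiv 1+9+4\cdot 27\equiv-2\pmod 5$. So the first congruence of the corollary holds only after replacing $189^k$ by $7^k$; this is evidently a typo in the paper (the companion corollaries for $q=19,31,37$ all state the $\binom{3k}{k}$ result in the $M^{2k}/q^k$ normalization $\sum_k\binom{3k}{k}/q^k$, and the values $-2,1$ are precisely those of that instance). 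In short: your residue bookkeeping modulo $7$ is correct throughout, but your argument proves the corrected statement for $\sum_k\binom{3k}{k}/7^k$ rather than the printed one for $\sum_k\binom{3k}{k}/189^k$ --- and the numerical check you propose at $p=5$, had it been carried out on the sum as printed, would have exposed exactly this mismatch rather than ``confirming'' the assignment.
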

\begin{corollary}
Let $p$ be a prime, $p\ne 2, 3, 7, 19$. Then
\begin{align*}
\sum_{k=0}^{p-1}{3k\choose k}\frac{1}{19^k}&\equiv
\begin{cases}
\ds
1 \pmod{p},
&  \text{if $p\equiv \pm 1, \pm 7, \pm 8\pmod{19}$;} \\
\ds -2/7 \pmod{p}, &   \text{if $p\equiv \pm 2,\pm 3, \pm 5 \pmod{19}$;} \\
\ds -5/7 \pmod{p}, &   \text{if $ p\equiv \pm 4, \pm 6, \pm 9 \pmod{19}$.}
\end{cases}
\end{align*}
\end{corollary}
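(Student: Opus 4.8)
The plan is to specialize the first congruence of Theorem \ref{MqL2} to the prime $q=19$. The first step is to produce the representation $4q=L^2+27M^2$ with $L\equiv 1\pmod 3$. Since $4\cdot 19=76$ forces $27M^2\le 76$, we must have $M^2=1$ and then $L^2=76-27=49$, so that $L=7$ and $M=\pm 1$; the congruence $7\equiv 1\pmod 3$ selects $L=7$. The hypothesis $p\nmid LM$ of Theorem \ref{MqL2} then reads simply $p\ne 7$, which together with $p\ne 2,3,19$ matches the hypotheses of the corollary. Because $M^2=1$, the left-hand side $\sum_{k=0}^{p-1}\binom{3k}{k}M^{2k}/q^k$ collapses to exactly the sum $\sum_{k=0}^{p-1}\binom{3k}{k}/19^k$ appearing in the statement.

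Next I would read off the three possible values. With $L=7$ and $M=\pm 1$, the two nontrivial values $(\pm 3M-L)/(2L)$ in Theorem \ref{MqL2} are $(3-7)/14=-2/7$ and $(-3-7)/14=-5/7$, while the trivial case contributes $1$. Thus the only remaining task is to translate the three conditions on $p^{(q-1)/3}=p^{6}\pmod{19}$ into the explicit residue classes of $p$ modulo $19$ listed in the corollary.

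To carry out this translation, note first that $L/(3M)=7/3\equiv 15\pmod{19}$ (since $3^{-1}\equiv 13$), so the two quantities $(-1\pm L/(3M))/2$ appearing in Theorem \ref{MqL2} reduce modulo $19$ to $(-1+15)/2\equiv 7$ and $(-1-15)/2\equiv 11$, the two primitive cube roots of unity in $\mathbb{Z}/19\mathbb{Z}$. The map $p\mapsto p^{6}$ on $(\mathbb{Z}/19\mathbb{Z})^{*}$ has image the order-$3$ subgroup $\{1,7,11\}$, and by Euler's cubic criterion $p^{6}\equiv 1\pmod{19}$ exactly when $p$ is a cubic residue modulo $19$. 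Computing the cubes directly (or using a primitive root such as $2$, for which $p^{6}=(2^{6})^{\mathrm{ind}\,p}=7^{\mathrm{ind}\,p}$), one finds the cubic residues to be $\{\pm 1,\pm 7,\pm 8\}$, while $p^{6}\equiv 7$ on $\{\pm 2,\pm 3,\pm 5\}$ and $p^{6}\equiv 11$ on $\{\pm 4,\pm 6,\pm 9\}$. Matching the value $1$ with $p^{6}\equiv 1$, the value $-2/7$ with $p^{6}\equiv 7$, and the value $-5/7$ with $p^{6}\equiv 11$ then yields precisely the three cases of the corollary.

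The steps above are elementary, so there is no genuine obstacle; the only point requiring care is the bookkeeping in the last paragraph, namely verifying that the partition of $(\mathbb{Z}/19\mathbb{Z})^{*}$ according to the value of $p^{6}$ agrees with the stated grouping into $\{\pm 1,\pm 7,\pm 8\}$, $\{\pm 2,\pm 3,\pm 5\}$, and $\{\pm 4,\pm 6,\pm 9\}$, and that the sign correlation between the conditions $(-1\pm L/(3M))/2$ and the values $-2/7,-5/7$ is respected throughout.
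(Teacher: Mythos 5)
Your proposal is correct and follows exactly the paper's route: the paper obtains this corollary simply by setting $q=19$ in Theorem \ref{MqL2}, which is precisely your specialization $4\cdot 19=7^2+27\cdot 1^2$, $L=7$, $M=\pm 1$. Your explicit verification that the fibers of $p\mapsto p^{6}$ on $(\mathbb{Z}/19\mathbb{Z})^{*}$ are $\{\pm 1,\pm 7,\pm 8\}$, $\{\pm 2,\pm 3,\pm 5\}$, $\{\pm 4,\pm 6,\pm 9\}$ (with values $1$, $7$, $11$ respectively) supplies the bookkeeping the paper leaves implicit, and it matches the stated cases.
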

\begin{corollary}
Let $p$ be a prime, $p\ne 2, 3, 31$. Then
\begin{align*}
\sum_{k=0}^{p-1}{3k\choose k}\left(\frac{4}{31}\right)^k&\equiv
\begin{cases}
\ds
1 \pmod{p},
&  \text{if $p\equiv \pm 1, \pm 2, \pm 4, \pm 8, \pm 15 \pmod{31}$;} \\
\ds -5/4 \pmod{p}, &   \text{if $p\equiv \pm 3, \pm 6, \pm 7, \pm 12, \pm 14 \pmod{31}$;} \\
\ds 1/4 \pmod{p}, &   \text{if $p\equiv \pm 5,\pm 9, \pm 10, \pm 11, \pm 13 \pmod{31}$.}
\end{cases}
\end{align*}
\end{corollary}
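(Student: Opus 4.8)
The plan is to read off the statement as the special case $q=31$ of Theorem~\ref{MqL2}. First I would solve $4q=L^2+27M^2$ with $L\equiv 1\pmod 3$: the identity $124=4^2+27\cdot 2^2$ forces $L=4$ and $|M|=2$, and since the sum depends only on $M^2$, the sign of $M$ is immaterial and I may take $M=2$. Because $M^2/q=4/31$, the first congruence of Theorem~\ref{MqL2} applies verbatim to $\sum_{k=0}^{p-1}{3k\choose k}\bigl(\tfrac{4}{31}\bigr)^k$, with the hypothesis $p\nmid LM=8$ reducing to $p\ne 2$, so that the admissible primes are exactly $p\ne 2,3,31$.

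Next I would evaluate the constants and translate the residue conditions. With $L=4$, $M=2$ one has $\frac{3M-L}{2L}=\frac{2}{8}=\frac14$ and $\frac{-3M-L}{2L}=\frac{-10}{8}=-\frac54$, which are precisely the two nontrivial values in the statement. For the conditions, $(q-1)/3=10$ and $L/(3M)=4/6\equiv 11\pmod{31}$, so the congruences $p^{(q-1)/3}\equiv\frac{-1\pm L/(3M)}{2}\pmod q$ become $p^{10}\equiv\frac{-1+11}{2}\equiv 5$ (paired with the value $\tfrac14$) and $p^{10}\equiv\frac{-1-11}{2}\equiv 25\pmod{31}$ (paired with $-\tfrac54$), while the cubic-residue case $p^{10}\equiv 1$ yields the value $1$.

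The only genuine work is then to partition $(\mathbb Z/31\mathbb Z)^\times$ by the value of $p^{10}\pmod{31}$. Since $5^3=125\equiv 1\pmod{31}$, the residues $1,5,25$ are exactly the three cube roots of unity modulo $31$, and Fermat's little theorem guarantees $p^{10}\in\{1,5,25\}$ for every $p$ coprime to $31$. Fixing the primitive root $g=3$ (whose order one checks to be $30$) and using $3^{10}\equiv 25\pmod{31}$, a residue $p=3^k$ satisfies $p^{10}\equiv 3^{10k}$, hence $p^{10}\equiv 1,25,5$ according as $k\equiv 0,1,2\pmod 3$. Tabulating the powers $3^k$ for $0\le k\le 29$ sorts the residues into $\{\pm1,\pm2,\pm4,\pm8,\pm15\}$, $\{\pm3,\pm6,\pm7,\pm12,\pm14\}$, and $\{\pm5,\pm9,\pm10,\pm11,\pm13\}$, which are the three cases displayed. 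The main obstacle is purely bookkeeping: carrying out this tabulation correctly and keeping the sign in $\frac{-1\pm L/(3M)}{2}$ matched to the corresponding sign in $\frac{\pm 3M-L}{2L}$.
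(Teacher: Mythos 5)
Your proposal is correct and is exactly the paper's proof: the paper obtains this corollary by setting $q=31$ (so $L=4$, $|M|=2$, $M^2/q=4/31$) in Theorem~\ref{MqL2}, just as you do, with the only work being the computation of the constants $\frac{\pm 3M-L}{2L}\in\{\tfrac14,-\tfrac54\}$ and the sorting of residues modulo $31$ into the three classes determined by $p^{10}\equiv 1, 5, 25\pmod{31}$. Your tabulation via the primitive root $3$ and the sign pairing between $\frac{-1\pm L/(3M)}{2}$ and $\frac{\pm 3M-L}{2L}$ are both carried out correctly.
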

\begin{corollary}
Let $p$ be a prime, $p\ne 2, 3, 11, 37$. Then
\begin{align*}
\sum_{k=0}^{p-1}{3k\choose k}\frac{1}{37^k}&\equiv
\begin{cases}
\ds
1 \pmod{p},
&  \text{if $p\equiv \pm 1, \pm 6, \pm 8, \pm 10, \pm 11, \pm 14, \pmod{37}$;} \\
\ds -4/11 \pmod{p}, &   \text{if $p\equiv \pm 2,\pm 9, \pm 12, \pm 15, \pm 16, \pm 17 \pmod{37}$;} \\
\ds -7/11 \pmod{p}, &   \text{if $p\equiv \pm 3, \pm 4, \pm 5, \pm 7, \pm 13, \pm 18 \pmod{37}$,}
\end{cases}
\\[7pt]
\sum_{k=0}^{p-1}\frac{C_k^{(2)}}{37^k}&
\equiv
\begin{cases}
\ds
1 \pmod{p},
&  \text{if $p\equiv \pm 1, \pm 6, \pm 8, \pm 10, \pm 11, \pm 14, \pmod{37}$;} \\
\ds -6 \pmod{p}, &   \text{if $p\equiv \pm 2,\pm 9, \pm 12, \pm 15, \pm 16, \pm 17 \pmod{37}$;} \\
\ds 5 \pmod{p}, &   \text{if $p\equiv \pm 3, \pm 4, \pm 5, \pm 7, \pm 13, \pm 18 \pmod{37}$.}
\end{cases}
\end{align*}
\end{corollary}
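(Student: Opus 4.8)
The plan is to obtain both families of congruences as the specialization $q=37$ of Theorem~\ref{MqL2}, so that the only real work is (a) pinning down the integers $L,M$, and (b) converting the cubic-residue hypotheses $p^{(q-1)/3}\equiv\cdots\pmod q$ into the explicit list of residue classes of $p$ modulo $37$.

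First I would solve $4\cdot 37=148=L^2+27M^2$ subject to $L\equiv 1\pmod 3$. The only solution is $L=-11$, $M=\pm 1$; since the sign of $M$ enters Theorem~\ref{MqL2} only through $M^2$ and through the sign already carried by the correlated $\pm$ (flipping $M\mapsto -M$ merely interchanges the two $\pm$ branches, leaving the set of condition/value pairs unchanged), I may take $M=1$. Then $M^2/q=1/37$, so the two sums in the corollary are exactly the instances $\sum_k\binom{3k}{k}M^{2k}/q^{k}$ and $\sum_k C_k^{(2)}M^{2k}/q^{k}$ of Theorem~\ref{MqL2}. The hypothesis $p\nmid LM$ becomes $p\neq 11$, which together with $p\neq 2,3,q$ gives the stated restriction $p\neq 2,3,11,37$.

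Next I would evaluate the rational constants. For the $\binom{3k}{k}$ sum the two nontrivial values are $\frac{\pm 3M-L}{2L}=\frac{\pm 3+11}{-22}$, that is $-\frac{7}{11}$ and $-\frac{4}{11}$; for the $C_k^{(2)}$ sum they are $\frac{\pm L-M}{2M}=\frac{\mp 11-1}{2}$, that is $-6$ and $5$. These already match the right-hand sides of the corollary, so it remains only to identify the congruence classes and attach the correct sign to each.

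The main computation, and the one place where care is needed, is translating the conditions on $p^{(q-1)/3}=p^{12}\pmod{37}$ into residue classes. The cube roots of unity modulo $37$ are $1,10,26$, and the criterion sorts $p$ according to which of these equals $p^{12}$; equivalently, $p$ lies in one of the three cosets of the group of cubic residues modulo $37$. I would compute $\tfrac{-1\pm L/(3M)}{2}\equiv\tfrac{-1\pm 21}{2}\pmod{37}$ in the $\binom{3k}{k}$ case and $\tfrac{-1\pm 9M/L}{2}\equiv\tfrac{-1\pm 16}{2}\pmod{37}$ in the $C_k^{(2)}$ case, obtaining in both instances the two nontrivial roots $10$ and $26$, and then tabulate $p^{12}\bmod 37$ over a set of representatives to see that $p^{12}\equiv 1$ exactly for $p\equiv\pm1,\pm6,\pm8,\pm10,\pm11,\pm14$, that $p^{12}\equiv 26$ exactly for $p\equiv\pm2,\pm9,\pm12,\pm15,\pm16,\pm17$, and $p^{12}\equiv 10$ for the remaining classes $p\equiv\pm3,\pm4,\pm5,\pm7,\pm13,\pm18$. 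Matching each coset with the corresponding sign in the value formulas (the $-$ branch giving $p^{12}\equiv 26$, hence $-\tfrac{4}{11}$ and $-6$; the $+$ branch giving $p^{12}\equiv 10$, hence $-\tfrac{7}{11}$ and $5$) and checking that this assignment is consistent between the two sums then yields both congruences. The only genuine pitfall is keeping the sign bookkeeping straight, since the $\pm$ in the hypothesis and the $\pm$ in the value must be read off the same root; everything else is a finite residue-class verification modulo $37$.
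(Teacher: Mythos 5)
Your proposal is correct and is essentially the paper's own proof: the paper obtains this corollary precisely by setting $q=37$ (so $L=-11$, $M=\pm1$, $M^2/q=1/37$) in Theorem~\ref{MqL2} and sorting $p$ into the three cosets of cubic residues modulo $37$, exactly as you do, and all of your computed values and coset identifications check out. One small caveat in your final parenthetical: since $9M/L\equiv -L/(3M)\pmod{37}$, the $\pm$ labels swap between the two sums, so for the $C_k^{(2)}$ sum it is the $+$ branch (value $-6$) that corresponds to $p^{12}\equiv 26$, not the $-$ branch as you wrote — but your ultimate pairing of values with residue classes is nevertheless the correct one.
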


\section{Polynomial congruences involving $S_n$} \label{Section6}

In this section, we will deal with a particular case of Theorem \ref{TM} when $m=6$. In this case, we get polynomial congruences containing the sequence
 $S_k$ (OEIS \seqnum{A176898})  and also $(2k+1)S_k$.

\begin{theorem} \label{T3}
Let $p$ be a prime greater than $3$, and let $t\in D_p$. Then
\begin{align}
\sum_{k=0}^{\lfloor p/6\rfloor}S_k t^k&\equiv \frac{3}{4}\left(\frac{p}{3}\right) w_{\lfloor\frac{p}{6}\rfloor}(1-216t) \pmod{p}, \label{eq18}\\
\sum_{k=(p-1)/2}^{\lfloor 5p/6\rfloor}
S_kt^k&\equiv -\frac{1}{8}\left(\frac{p}{3}\right)\Bigl(w_{\lfloor\frac{5p}{6}\rfloor}(1-216t)+w_{\lfloor\frac{p}{6}\rfloor}(1-216t)\Bigr) \pmod{p}, \nonumber \\
\sum_{k=0}^{\lfloor p/6\rfloor}
(2k+1)S_kt^k&\equiv \frac{1}{2}(-1)^{\frac{p-1}{2}} w_{\lfloor\frac{p}{6}\rfloor}(216t-1) \pmod{p}, \label{eq20}\\
\sum_{k=(p+1)/2}^{\lfloor 5p/6\rfloor}
(2k+1)S_kt^k&\equiv \frac{1}{12}(-1)^{\frac{p-1}{2}} \Bigl(w_{\lfloor\frac{5p}{6}\rfloor}(216t-1)-w_{\lfloor\frac{p}{6}\rfloor}(216t-1)\Bigr) \pmod{p}. \nonumber
\end{align}
\end{theorem}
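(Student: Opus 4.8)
The plan is to specialize Theorem \ref{TM} to $m=6$, where $\left(\frac{1}{m}\right)_k\left(\frac{m-1}{m}\right)_k=\left(\frac16\right)_k\left(\frac56\right)_k$, and to translate the generic hypergeometric coefficients into $S_k$. First I would establish the algebraic identity
$$S_k=\frac{432^k\left(\frac16\right)_k\left(\frac56\right)_k}{2\,(2k+1)!},\qquad\text{equivalently}\qquad (2k+1)S_k=\frac{432^k\left(\frac16\right)_k\left(\frac56\right)_k}{2\,(2k)!}.$$
This can be checked directly from the factorial expression $S_k=\frac{(6k)!\,k!}{2(2k+1)(3k)!\,((2k)!)^2}$ together with $\left(\frac16\right)_k\left(\frac56\right)_k=\prod_{j=0}^{k-1}(6j+1)(6j+5)/36^k$ and $(2k+1)!=(3/2)_k\,k!\,4^k$; alternatively it drops out of comparing the hypergeometric form of \eqref{eq04} at $a=2/3$ with the generating function \eqref{gfSn}. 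With this identity in hand, $\sum S_k t^k=\tfrac12\sum\frac{(1/6)_k(5/6)_k}{(2k+1)!}(432t)^k$ and $\sum(2k+1)S_kt^k=\tfrac12\sum\frac{(1/6)_k(5/6)_k}{(2k)!}(432t)^k$, so each of the four sums in the theorem is $\tfrac12$ times the corresponding sum of Theorem \ref{TM} (for $m=6$) evaluated at $432t$; since $432t/2=216t$, the arguments $1-t/2$ and $t/2-1$ of $w_n$ become $1-216t$ and $216t-1$, and the index ranges match the four congruences of Theorem \ref{TM} verbatim.

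Next I would carry out the two constant simplifications specific to $m=6$. Writing $p=6l+r$ with $r\in\{1,5\}$ and $\lfloor p/6\rfloor=l$, one has $1+2l\equiv(3-r)/3\pmod p$, which equals $\tfrac23$ when $p\equiv1\pmod3$ and $-\tfrac23$ when $p\equiv2\pmod3$; hence $1+2\lfloor p/6\rfloor\equiv\tfrac23\left(\frac p3\right)$ and so $\frac{1}{1+2\lfloor p/6\rfloor}\equiv\tfrac32\left(\frac p3\right)\pmod p$. This produces the prefactor $\tfrac34\left(\frac p3\right)$ in \eqref{eq18} (from $\tfrac12\cdot\tfrac32$) and $\tfrac18\left(\frac p3\right)$ in the second congruence (from $\tfrac12\cdot\tfrac16\cdot\tfrac32$). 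For the odd-index sums I would check the parity identity $(-1)^{\lfloor p/6\rfloor}=(-1)^{(p-1)/2}$: in both cases $r=1$ and $r=5$ one computes $(p-1)/2\equiv l\pmod2$, so $(-1)^l=(-1)^{(p-1)/2}$, turning the factor $(-1)^{\lfloor p/6\rfloor}$ from Theorem \ref{TM} into $(-1)^{(p-1)/2}$ and giving the prefactors $\tfrac12(-1)^{(p-1)/2}$ in \eqref{eq20} and $\tfrac1{12}(-1)^{(p-1)/2}$ in the last congruence.

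Assembling these pieces, congruences \eqref{eq08}, \eqref{eq10}, \eqref{eq09}, and the final congruence of Theorem \ref{TM} (each at $m=6$, scaled by $\tfrac12$, with $t\mapsto432t$) yield exactly the four displayed statements. This mirrors the derivations of Theorems \ref{T1} and \ref{T2} from the same master theorem, so the argument is essentially bookkeeping once the $S_k$--Pochhammer dictionary is fixed. The only genuinely delicate point is getting the normalizing constant $432=4\cdot108$ and the rational prefactors right, since an error in the identity for $S_k$ or in the reduction of $1+2\lfloor p/6\rfloor$ modulo $p$ would propagate into every line; I would therefore verify the $S_k$ identity at $k=0,1$ (giving $S_0=\tfrac12$ and $S_1=5$) as a safeguard before invoking Theorem \ref{TM}.
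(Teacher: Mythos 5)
Your proposal is correct and is exactly the argument the paper intends: Theorem \ref{T3} is the $m=6$ specialization of Theorem \ref{TM} (the paper leaves this routine, mirroring its explicit proof of Theorem \ref{T1} for $m=4$), using the identity $S_k=\frac{432^k(1/6)_k(5/6)_k}{2\,(2k+1)!}$, the reduction $\frac{1}{1+2\lfloor p/6\rfloor}\equiv\frac32\left(\frac{p}{3}\right)\pmod p$, and the parity fact $(-1)^{\lfloor p/6\rfloor}=(-1)^{(p-1)/2}$, with $t\mapsto 432t$. All of your constant computations and index-range matchings check out.
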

\begin{corollary} \label{C3}
Let $p$ be a prime greater than $3$, and let $t\in D_p$. Then
\begin{align*}
\sum_{k=0}^{p-1}S_kt^k&\equiv \frac{1}{8}\left(\frac{p}{3}\right)\Bigl(5w_{\lfloor\frac{p}{6}\rfloor}(1-216t)-w_{\lfloor\frac{5p}{6}\rfloor}(1-216t)\Bigr) \pmod{p}, \\
\sum_{k=0}^{p-1}(2k+1)S_k t^k&\equiv \frac{1}{12}(-1)^{\frac{p-1}{2}}\Bigl(w_{\lfloor\frac{5p}{6}\rfloor}(216t-1)+5w_{\lfloor\frac{p}{6}\rfloor}(216t-1)\Bigr) \pmod{p}.
\end{align*}
\end{corollary}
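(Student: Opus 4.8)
The plan is to read off both congruences directly from Corollary~\ref{CM} at $m=6$, after rewriting $S_k$ and $(2k+1)S_k$ in terms of the summands that appear there. The key translation is the identity
\[
\frac{\left(\tfrac16\right)_k\left(\tfrac56\right)_k}{(2k+1)!}=\frac{2S_k}{432^k},
\qquad
\frac{\left(\tfrac16\right)_k\left(\tfrac56\right)_k}{(2k)!}=\frac{2(2k+1)S_k}{432^k},
\]
which is precisely the normalization underlying the passage from Theorem~\ref{TM} to Theorem~\ref{T3}. It follows by comparing the Gauss series $F\!\left(\tfrac16,\tfrac56;\tfrac32;z^2\right)=\sum_k\frac{\left(\frac16\right)_k\left(\frac56\right)_k4^k}{(2k+1)!}z^{2k}$ (using $\left(\tfrac32\right)_k k!=(2k+1)!/4^k$) with the generating function (\ref{gfSn}), noting $4\cdot 108=432$; a check at $k=0,1$ (where $S_0=\tfrac12$, $S_1=5$) fixes the constant.

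Granting this, I would apply Corollary~\ref{CM} with $m=6$ and then substitute $t\mapsto 432t$ (legitimate since $p>3$ makes $432$ a $p$-adic unit, so $432t\in D_p$). The two left-hand sides then become $2\sum_{k=0}^{p-1}S_kt^k$ and $2\sum_{k=0}^{p-1}(2k+1)S_kt^k$, while the arguments of the Jacobi values pass to $1-432t/2=1-216t$ and $432t/2-1=216t-1$, matching the right-hand sides of Theorem~\ref{T3}. What remains is to simplify the scalar prefactors modulo $p$.

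To do this I would write $p=6l+r$ with $r\in\{1,5\}$. A short case analysis gives $1+2\lfloor p/6\rfloor=1+2l\equiv\tfrac23\left(\tfrac p3\right)\pmod p$, whence $\frac{1}{6(1+2\lfloor p/6\rfloor)}\equiv\tfrac14\left(\tfrac p3\right)$; and in both cases $(p-1)/2\equiv\lfloor p/6\rfloor\pmod 2$, so $(-1)^{\lfloor p/6\rfloor}=(-1)^{(p-1)/2}$. Feeding these into the $m=6$ instance of Corollary~\ref{CM} and dividing by $2$ produces
\[
\sum_{k=0}^{p-1}S_kt^k\equiv\frac18\left(\frac p3\right)\Bigl(5w_{\lfloor p/6\rfloor}(1-216t)-w_{\lfloor 5p/6\rfloor}(1-216t)\Bigr)\pmod p,
\]
together with the companion congruence for $(2k+1)S_k$, exactly as stated.

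The argument is essentially bookkeeping, so I do not expect a genuine obstacle; the only points needing care are the two elementary prefactor identities (the split on $r\in\{1,5\}$ is precisely what converts $1+2l$ into the symbol $\left(\tfrac p3\right)$ and the sign into $(-1)^{(p-1)/2}$) and the correct tracking of the normalization constant $432$ and the overall factor $2$ through the substitution $t\mapsto 432t$.
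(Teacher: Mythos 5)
Your proposal is correct and is precisely the argument the paper intends (the paper states Corollary \ref{C3} without printed proof, as the $m=6$ instance of Corollary \ref{CM}, in exact analogy with the $m=4$ case worked out for Theorem \ref{T1}): the normalization $\frac{(1/6)_k(5/6)_k}{(2k+1)!}=\frac{2S_k}{432^k}$, the substitution $t\mapsto 432t$, and the prefactor identities $1+2\lfloor p/6\rfloor\equiv\frac{2}{3}\left(\frac{p}{3}\right)\pmod{p}$ and $(-1)^{\lfloor p/6\rfloor}=(-1)^{(p-1)/2}$ are exactly the required bookkeeping, and all of them check out.
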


Taking into account (\ref{w-101}), we get the following explicit  congruences. Note that the first congruence below confirms a conjecture of Z.\ W.\ Sun \cite[Conj.\ 2]{ZWS13}.
\begin{corollary}
 Let $p$ be a prime greater than $3$. Then
\begin{align*}
\sum_{k=0}^{p-1}\frac{S_k}{108^k}&\equiv \frac{1}{2}\left(\frac{3}{p}\right),
\qquad\quad
\sum_{k=0}^{\lfloor p/6\rfloor}\frac{S_k}{108^k}\equiv \frac{3}{4}\left(\frac{3}{p}\right) \pmod{p}, \\
\sum_{k=0}^{p-1}\frac{S_k}{216^k}&\equiv \frac{1}{2}\left(\frac{2}{p}\right),
\qquad\quad
\sum_{k=0}^{\lfloor p/6\rfloor}\frac{S_k}{216^k}\equiv \frac{3}{4}\left(\frac{2}{p}\right) \pmod{p},
\end{align*}
\vspace{-0.3cm}
\begin{align*}
\sum_{k=0}^{p-1}\frac{(2k+1)S_k}{108^k}&\equiv \frac{2}{9}\left(\frac{3}{p}\right),
\qquad\quad
\sum_{k=0}^{\lfloor p/6\rfloor}\frac{(2k+1)S_k}{108^k}\equiv \frac{1}{3}\left(\frac{3}{p}\right) \pmod{p},  \\
\sum_{k=0}^{p-1}\frac{(2k+1)S_k}{216^k}&\equiv \frac{1}{2}\left(\frac{6}{p}\right),
\qquad\quad
\sum_{k=0}^{\lfloor p/6\rfloor}\frac{(2k+1)S_k}{216^k}\equiv \frac{1}{2}\left(\frac{6}{p}\right) \pmod{p}.
\end{align*}
\end{corollary}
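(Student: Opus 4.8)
The plan is to obtain every congruence by specializing Theorem~\ref{T3} and Corollary~\ref{C3} at the two points $t=1/108$ and $t=1/216$, and then reading off the values of $w_{\lfloor p/6\rfloor}$ and $w_{\lfloor 5p/6\rfloor}$ from the exact evaluations~(\ref{w-101}). These two points are chosen precisely so that the argument $216t-1$ lands on a node of the sequence $w_n$: for $t=1/108$ we have $216t=2$, hence $1-216t=-1$ and $216t-1=1$, so only $w_n(-1)=(-1)^n$ and $w_n(1)=2n+1$ appear; for $t=1/216$ we have $216t=1$, so the argument is $0$ and only $w_n(0)=(-1)^{\lfloor n/2\rfloor}$ appears. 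Throughout I would write $n=\lfloor p/6\rfloor$ and use the identity $\lfloor 5p/6\rfloor=p-1-\lfloor p/6\rfloor$ established in the proof of Theorem~\ref{TM}.

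First I would record the two parity normalizations that make the $w$-values collapse to Legendre symbols. Writing $p=6n+r$ with $r\in\{1,5\}$, a direct check of the two classes gives $(-1)^{\lfloor p/6\rfloor}=(-1)^{(p-1)/2}$; combined with quadratic reciprocity $(\frac{3}{p})=(-1)^{(p-1)/2}(\frac{p}{3})$ this converts every stray factor $(\frac{p}{3})(-1)^n$ into $(\frac{3}{p})$. For $t=1/108$ the partial sums involve $w_n(-1)=(-1)^n$ directly, while in the full sums $\lfloor 5p/6\rfloor=p-1-n$ gives $w_{\lfloor 5p/6\rfloor}(-1)=(-1)^{p-1-n}=(-1)^n$, so that $5w_n(-1)-w_{\lfloor 5p/6\rfloor}(-1)=4(-1)^n$ and the prefactor $\frac{1}{8}(\frac{p}{3})$ in Corollary~\ref{C3} yields exactly $\frac{1}{2}(\frac{3}{p})$. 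For the $(2k+1)S_k$ sums at the same point one meets instead $w_n(1)=2n+1$ and $w_{\lfloor 5p/6\rfloor}(1)=2(p-1-n)+1\equiv -1-2n\pmod p$, whence $w_{\lfloor 5p/6\rfloor}(1)+5w_n(1)\equiv 4(2n+1)$; the only new input is $2n+1=2\lfloor p/6\rfloor+1\equiv\frac{2}{3}(\frac{p}{3})\pmod p$, obtained by reducing $\frac{p-r+3}{3}$ modulo $p$ in the cases $r=1,5$.

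The point $t=1/216$ is the delicate one, since here the relevant value is $w_n(0)=(-1)^{\lfloor n/2\rfloor}$, which depends on $n\bmod 4$ and hence on $p\bmod 24$. The two facts I would need are (i) $w_{\lfloor 5p/6\rfloor}(0)=w_{\lfloor p/6\rfloor}(0)$ and (ii) $(\frac{p}{3})(-1)^{\lfloor n/2\rfloor}=(\frac{2}{p})$. Fact (i) reduces, via $\lfloor 5p/6\rfloor=p-1-n$ and a short floor-function computation, to the already-proven identity $(-1)^n=(-1)^{(p-1)/2}$, and it collapses both ranges to a single $w_n(0)$, since $5w_n(0)-w_{\lfloor 5p/6\rfloor}(0)=4w_n(0)$ and $w_{\lfloor 5p/6\rfloor}(0)+5w_n(0)=6w_n(0)$. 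Granting (i) and (ii), the $S_k/216^k$ sums come out as $\frac{1}{2}(\frac{p}{3})(-1)^{\lfloor n/2\rfloor}=\frac{1}{2}(\frac{2}{p})$ and $\frac{3}{4}(\frac{2}{p})$, while for the $(2k+1)S_k/216^k$ sums the factor $(-1)^{(p-1)/2}$ in Theorem~\ref{T3} and Corollary~\ref{C3} combines with (ii) through $(\frac{6}{p})=(\frac{2}{p})(\frac{3}{p})$ to give $\frac{1}{2}(\frac{6}{p})$ in both ranges.

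The main obstacle is fact (ii): everything else is forced once the $w$-values are substituted, but the identity $(\frac{p}{3})(-1)^{\lfloor\lfloor p/6\rfloor/2\rfloor}=(\frac{2}{p})$ genuinely mixes a floor-of-floor parity with two Legendre symbols and seems to require a residue-class analysis rather than a one-line reciprocity manipulation. I expect to prove it by running through the eight admissible classes $p\equiv 1,5,7,11,13,17,19,23\pmod{24}$, computing $\lfloor\lfloor p/6\rfloor/2\rfloor\bmod 2$ in each and matching it against the second supplementary law for $(\frac{2}{p})$ together with the value of $(\frac{p}{3})$; one finds $(-1)^{\lfloor n/2\rfloor}=+1$ exactly for $p\equiv 1,5,7,11$ and $-1$ for $p\equiv 13,17,19,23$, which coincides with $(\frac{2}{p})(\frac{p}{3})$. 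I would isolate this as a lemma and then quote it for all four of the $t=1/216$ congruences.
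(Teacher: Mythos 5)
Your proposal is correct and is essentially the paper's own proof: the paper derives this corollary by specializing Theorem~\ref{T3} and Corollary~\ref{C3} at $t=1/108$ and $t=1/216$ (so the arguments $1-216t$, $216t-1$ become $-1$, $1$, or $0$) and invoking the exact values in (\ref{w-101}), which is exactly your plan. The paper omits the parity bookkeeping that you spell out — the identities $(-1)^{\lfloor p/6\rfloor}=(-1)^{(p-1)/2}$, $2\lfloor p/6\rfloor+1\equiv\frac{2}{3}\bigl(\frac{p}{3}\bigr)\pmod{p}$, and $\bigl(\frac{p}{3}\bigr)(-1)^{\lfloor\lfloor p/6\rfloor/2\rfloor}=\bigl(\frac{2}{p}\bigr)$ via residues modulo $24$ — and your verifications of these are all correct.
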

 From Corollary \ref{C3} and (\ref{12-1})  we get the following congruences.
\begin{corollary}
Let $p$ be a prime greater than $3$. Then
\begin{align*}
\left(\frac{3}{p}\right)\sum_{k=0}^{p-1}\frac{S_k}{432^k}&\equiv
\begin{cases}
\ds
1/2 \pmod{p},
&  \text{if $p\equiv \pm 1 \pmod{9}$;} \\
\ds -11/8 \pmod{p}, &   \text{if $p\equiv \pm 2 \pmod{9}$;} \\
\ds 7/8 \pmod{p}, &   \text{if $p\equiv \pm 4 \pmod{9}$,}
\end{cases}
\\[5pt]
\sum_{k=0}^{p-1}S_k\left(\frac{3}{432}\right)^k&\equiv
\begin{cases}
\ds
1/2 \pmod{p},
&  \text{if $p\equiv \pm 1 \pmod{9}$;} \\
\ds 1/8 \pmod{p}, &  \text{if $p\equiv \pm 2 \pmod{9}$;} \\
\ds-5/8 \pmod{p}, &  \text{if $p\equiv \pm 4 \pmod{9}$,}
\end{cases}
\\[5pt]
\left(\frac{p}{3}\right)\sum_{k=0}^{p-1}\frac{(2k+1)S_k}{432^k}&
\equiv
\begin{cases}
\ds
1/2 \pmod{p},
&  \text{if $p\equiv \pm 1 \pmod{9}$;} \\
\ds -1/12 \pmod{p}, &   \text{if $p\equiv \pm 2 \pmod{9}$;} \\
\ds -5/12 \pmod{p}, &   \text{if $p\equiv \pm 4 \pmod{9}$,}
\end{cases}
\\[5pt]
\sum_{k=0}^{p-1}(2k+1)S_k\left(\frac{3}{432}\right)^k&\equiv
\begin{cases}
\ds
1/2 \pmod{p},
&  \text{if $p\equiv \pm 1 \pmod{9}$;} \\
\ds -3/4 \pmod{p}, &   \text{if $p\equiv \pm 2 \pmod{9}$;} \\
\ds 1/4 \pmod{p}, &   \text{if $p\equiv \pm 4 \pmod{9}$.}
\end{cases}
\end{align*}
\end{corollary}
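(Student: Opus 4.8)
The plan is to specialize Corollary~\ref{C3} at the two points $t=\tfrac1{432}$ and $t=\tfrac3{432}$. Since $216\cdot\tfrac1{432}=\tfrac12$ and $216\cdot\tfrac3{432}=\tfrac32$, in every case the shifted arguments $1-216t$ and $216t-1$ occurring in Corollary~\ref{C3} reduce to $\pm\tfrac12$. Thus $\sum_{k=0}^{p-1}S_k/432^k$ and $\sum_{k=0}^{p-1}(2k+1)S_k/432^k$ become fixed linear combinations of $w_{\lfloor p/6\rfloor}(\tfrac12),\,w_{\lfloor 5p/6\rfloor}(\tfrac12)$ and of $w_{\lfloor p/6\rfloor}(-\tfrac12),\,w_{\lfloor 5p/6\rfloor}(-\tfrac12)$ respectively, while replacing $t=\tfrac1{432}$ by $t=\tfrac3{432}$ simply interchanges the roles of $+\tfrac12$ and $-\tfrac12$. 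So all four congruences are reduced, modulo $p$, to explicit combinations of the four quantities $w_{\lfloor p/6\rfloor}(\pm\tfrac12)$ and $w_{\lfloor 5p/6\rfloor}(\pm\tfrac12)$, carrying the Legendre or Jacobi prefactors supplied by Corollary~\ref{C3}.

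Next I would evaluate these $w$-values using the trigonometric form (\ref{eq23.55}), recorded at $x=\pm\tfrac12$ in (\ref{12-1}). Since $\arccos\tfrac12=\tfrac\pi3$ and $\arccos(-\tfrac12)=\tfrac{2\pi}3$, one gets $w_n(\tfrac12)=2\cos\bigl(\tfrac{\pi(n-1)}3\bigr)$ and $w_n(-\tfrac12)=\tfrac2{\sqrt3}\sin\bigl(\tfrac{\pi(2n+1)}3\bigr)$; these are \emph{integers}, lying in $\{-2,-1,1,2\}$ and $\{-1,0,1\}$ and periodic in $n$ with period $6$ and $3$ respectively. Consequently each $w$-value is an explicit integer determined by $n\bmod 6$ (for the argument $+\tfrac12$) or by $n\bmod 3$ (for $-\tfrac12$), and its reduction modulo $p$ is that very integer. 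Using $\lfloor 5p/6\rfloor=p-1-\lfloor p/6\rfloor$ and writing $p=6n+r$ with $r\in\{1,5\}$, both indices are pinned down by a single residue of $p$.

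It remains to fold in the prefactors and to confirm that the answer depends on $p$ only through $p\bmod 9$. For the evaluations at $-\tfrac12$ this is immediate: $w_n(-\tfrac12)$ depends on $n\bmod 3$, and for odd $p$ coprime to $3$ the residue $\lfloor p/6\rfloor\bmod 3$ is itself determined by $p\bmod 9$, as is the factor $\left(\frac p3\right)$. For the evaluations at $+\tfrac12$ the $w$-values depend a priori on $p\bmod 36$ and the sign $(-1)^{(p-1)/2}$ appearing in Corollary~\ref{C3} on $p\bmod 4$; here the external Legendre/Jacobi factor is chosen precisely so that, after applying quadratic reciprocity $\left(\frac3p\right)=(-1)^{(p-1)/2}\left(\frac p3\right)$, the $p\bmod 4$ dependence cancels. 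I expect this cancellation to be the main obstacle: it is not visible term by term, and the cleanest route is a direct case analysis over the residues of $p$ modulo $36$, checking in each case that the resulting value collapses onto the six classes $p\equiv\pm1,\pm2,\pm4\pmod 9$. Reading the tabulated values off in each class then gives the four stated congruences.
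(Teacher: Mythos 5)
Your overall plan is exactly the paper's own proof: specialize Corollary~\ref{C3} at $t=1/432$ and $t=3/432$, read the values $w_n(\pm 1/2)$ off (\ref{12-1}), and do a residue-class case check. This works, and your bookkeeping is essentially right, for the first, second and fourth congruences. But note the actual pairing in Corollary~\ref{C3}: the two $S_k$-sums carry the internal factor $\left(\frac{p}{3}\right)$ (first line), the two $(2k+1)S_k$-sums carry $(-1)^{(p-1)/2}$ (second line), and the evaluation point $+\frac12$ or $-\frac12$ is decided independently by the choice of $t$. A mod-$4$ dependence needs cancelling exactly when the point is $+\frac12$, because $w_{m+3}(1/2)=-w_m(1/2)$; it is indeed cancelled for the first sum (external $\left(\frac{3}{p}\right)$ times internal $\left(\frac{p}{3}\right)$ gives $(-1)^{(p-1)/2}$) and for the fourth sum (the internal $(-1)^{(p-1)/2}$ alone does it), while the second sum, at $-\frac12$ with factor $\left(\frac{p}{3}\right)$, needs no cancellation.

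The gap is the third congruence, where the pairing is mismatched and the cancellation you assert does not occur. There $216t-1=-\frac12$, whose $w$-values have period $3$ and no sign flip, yet Corollary~\ref{C3} supplies $(-1)^{(p-1)/2}$; multiplying by the printed prefactor $\left(\frac{p}{3}\right)$ leaves $\left(\frac{p}{3}\right)(-1)^{(p-1)/2}=\left(\frac{3}{p}\right)$, which depends on $p$ modulo $12$ and is not constant on classes mod $9$. So your mod-$36$ table would not collapse onto the classes $p\equiv\pm1,\pm2,\pm4\pmod 9$, and in fact the third congruence as printed is false: for $p=19$ one has $\lfloor p/6\rfloor=3$, $\lfloor 5p/6\rfloor=15$, $w_3(-\tfrac12)=w_{15}(-\tfrac12)=1$, hence $\sum_{k=0}^{18}(2k+1)S_k/432^k\equiv\frac{(-1)^9}{12}(1+5)=-\frac12\pmod{19}$, while $\left(\frac{19}{3}\right)=1$ and $19\equiv 1\pmod 9$, so the left-hand side is $-\frac12$, not the claimed $\frac12$. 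The intended prefactor is $\left(\frac{3}{p}\right)$: this is confirmed by Theorem~\ref{Sc} with $c=3$, whose prefactor is $\left(\frac{c^2+3}{p}\right)=\left(\frac{12}{p}\right)=\left(\frac{3}{p}\right)$ and whose values $\frac12$, $\frac{2-c}{4c}=-\frac{1}{12}$, $-\frac{2+c}{4c}=-\frac{5}{12}$ are exactly those tabulated. With that correction your computation (and the paper's) goes through; but as written, a faithful execution of your own case analysis would have exposed the discrepancy rather than ``giving the four stated congruences,'' so the proposal as it stands establishes only three of them.
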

The following theorem provides two families of polynomial congruences.
\begin{theorem}
Let $p$ be a prime, $p>3$, and let $t\in D_p$.
If $t\not\equiv 0 \pmod{p}$, then the following congruences hold modulo $p$:
\begin{align*}
\sum_{k=0}^{\lfloor p/6\rfloor}S_k\bigl(t^2(4t+1)\bigr)^k&\!\!\equiv\!\!
\frac{1+12t}{32t}\!\left(\frac{1+4t}{p}\right)\!-\!
\frac{1-12t}{32t}\!\left(\frac{1-12t}{p}\right), \\
\sum_{k=0}^{p-1}S_k\!\bigl(t^2(4t+1)\bigr)^k&\!\!\equiv\! \frac{(1+12t)(1+4t)(1-6t)}{32t}
\!\left(\!\frac{1+4t}{p}\!\right)
\!-\!\frac{(1\!-\!12t)(24t^2+6t+1)}{32t}\!\left(\!
\frac{1-12t}{p}\!\right)\!.
\end{align*}
If $6t+1\not\equiv 0 \pmod{p}$, then we have modulo $p$,
\begin{align*}
\sum_{k=0}^{\lfloor p/6\rfloor}(2k+1)S_k\bigl(t^2(4t+1)\bigr)^k\!&\equiv \frac{1+12t}{8(1+6t)}
\left(\frac{1-12t}{p}\right)+\frac{3(1+4t)}{8(1+6t)}
\left(\frac{1+4t}{p}\right), \\
\sum_{k=0}^{p-1}(2k+1)S_k\bigl(t^2(4t+1)\bigr)^k\!&\equiv \frac{(1+12t)(24t^2+6t+1)}{8(1+6t)}\left(\frac{1-12t}{p}\right) \\
& \quad+
\frac{3(1-6t)(1+4t)^2}{8(1+6t)}\left(\frac{1+4t}{p}\right).
\end{align*}
\end{theorem}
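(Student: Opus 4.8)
The plan is to mirror the proof of the last theorem of Section~\ref{Section4}, replacing its $m=3$ inputs (Theorem~\ref{T2} and Lemma~\ref{L4.2}) by the $m=6$ counterparts Theorem~\ref{T3}, Corollary~\ref{C3}, and Lemma~\ref{L4.3}. The starting point is that all four sums are already expressed through the sequence $w_n$: by Theorem~\ref{T3} and Corollary~\ref{C3}, the two sums weighted by $S_k$ reduce modulo $p$ to linear combinations of $w_{\lfloor p/6\rfloor}(1-216t)$ and $w_{\lfloor 5p/6\rfloor}(1-216t)$, while the two sums weighted by $(2k+1)S_k$ reduce to linear combinations of $w_{\lfloor p/6\rfloor}(216t-1)$ and $w_{\lfloor 5p/6\rfloor}(216t-1)$. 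Lemma~\ref{L4.3} evaluates exactly these $w$-values whenever the argument has the special cubic shape $4x^3-3x$, so everything hinges on recognizing the arguments $1\mp 216t$ as values of this Chebyshev-type cubic.

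First I would introduce $x$ by forcing $4x^3-3x$ to equal the given argument. For the $S_k$ sums we set $1-216t=4x^3-3x$, which factors as $216t=(1-x)(2x+1)^2$; for the $(2k+1)S_k$ sums we set $216t-1=4x^3-3x$, i.e.\ $216t=(x+1)(2x-1)^2$. The crucial observation is that the substitution $x=-(1+12t)/2$ turns $(1-x)(2x+1)^2/216$ into exactly $t^2(4t+1)$, and symmetrically $x=(1+12t)/2$ turns $(x+1)(2x-1)^2/216$ into $t^2(4t+1)$. Under the first substitution one has $2x+1=-12t$, $x+1=(1-12t)/2$, so that $2x-2=-3(1+4t)$ and $-6x-6=-3(1-12t)$; under the second, $2x+1=2(6t+1)$, $x+1=3(1+4t)/2$, so that $2x-2=-(1-12t)$ and $-6x-6=-9(1+4t)$. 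These are precisely the quantities whose Legendre symbols appear in Lemma~\ref{L4.3}.

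Next I would substitute these into Lemma~\ref{L4.3}, simplifying the Legendre symbols by $\left(\frac{-3}{p}\right)=\left(\frac{p}{3}\right)$, $\left(\frac{-1}{p}\right)=(-1)^{(p-1)/2}$, and by discarding perfect-square factors such as $9$; the factor $\left(\frac{p}{3}\right)$ from Theorem~\ref{T3} then cancels the one produced by Lemma~\ref{L4.3}, and similarly $(-1)^{(p-1)/2}$ cancels for the $(2k+1)S_k$ sums. For the partial sums up to $\lfloor p/6\rfloor$ only the first congruence of Lemma~\ref{L4.3} is needed, and a short computation recovers the first and third displayed congruences, with the denominators $32t$ and $8(1+6t)$ arising from $2x+1=-12t$ and $2x+1=2(6t+1)$, respectively; this also explains the hypotheses $t\not\equiv0$ and $6t+1\not\equiv0$.

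For the two full sums up to $p-1$ I would use Corollary~\ref{C3} together with the second congruence of Lemma~\ref{L4.3}, which carries the extra polynomial factors $4x^2+2x-1$ and $4x^2-2x-1$. Evaluating these at $x=\mp(1+12t)/2$ and combining with the $w_{\lfloor p/6\rfloor}$ part produces the factors $(1+4t)(1-6t)$ and $24t^2+6t+1$ appearing in the second and fourth congruences. I expect the main obstacle to be purely bookkeeping: correctly expanding and collecting the products of the cubic-substitution factors with $4x^2\pm2x-1$ so that the coefficient of each Legendre symbol simplifies to the stated rational function of $t$. The cases $x\equiv1$ and $x\equiv-1/2\pmod p$ require no separate treatment here, since Lemma~\ref{L4.3} already incorporates them.
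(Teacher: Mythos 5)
Your proposal is correct and takes essentially the same route as the paper's own proof: reduce the four sums via (\ref{eq18}), (\ref{eq20}) and Corollary \ref{C3} to values of $w_{\lfloor p/6\rfloor}$ and $w_{\lfloor 5p/6\rfloor}$, recognize the arguments $1\mp 216t$ as $4x^3-3x$ through the factorizations $(1-x)(2x+1)^2$ and $(x+1)(2x-1)^2$, and apply Lemma \ref{L4.3} with $x=\mp(1+12t)/2$, the conditions $t\not\equiv 0$ and $6t+1\not\equiv 0 \pmod{p}$ coming exactly from $2x+1\not\equiv 0 \pmod{p}$. The only cosmetic difference is that the paper carries a general $x$ through the intermediate congruences and substitutes $x=(\mp 12t\mp 1)/2$ at the very end, whereas you substitute at the outset; the Legendre-symbol simplifications and the resulting polynomial bookkeeping are identical.
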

\begin{proof}
  From (\ref{eq18}), Corollary \ref{C3} and Lemma \ref{L4.3} for any
$x\in D_p$ with $2x+1\not\equiv 0$ (mod $p$), we have
\begin{align*}
\sum_{k=0}^{\lfloor p/6\rfloor}S_k&\left(\frac{(1-x)(2x+1)^2}{216}\right)^k\!\equiv
\frac{3}{4}\left(\frac{p}{3}\right)w_{\lfloor\frac{p}{6}\rfloor}(4x^3-3x) \\
&\equiv \frac{3(x+1)}{4(2x+1)}
\left(\frac{2x+2}{p}\right)+
\frac{3x}{4(2x+1)}\left(\frac{6-6x}{p}\right)
 \pmod{p},
\end{align*}
\begin{align*}
\sum_{k=0}^{p-1}S_k&\left(\frac{(1-x)(2x+1)^2}{216}\right)^k
\equiv \frac{1}{8}\left(\frac{p}{3}\right)\bigl(5w_{\lfloor\frac{p}{6}\rfloor}(4x^3-3x)
-w_{\lfloor\frac{5p}{6}\rfloor}(4x^3-3x)\bigr) \\
&\equiv \frac{(x+1)(2x^2-x+2)}{4(2x+1)}\left(\frac{2x+2}{p}\right)
-\frac{x(x-1)(2x+3)}{4(2x+1)}\left(\frac{6-6x}{p}\right) \pmod{p}.
\end{align*}
Now replacing $x$ by $(-12t-1)/2$, we get the first two congruences of the theorem.

 Similarly, from (\ref{eq20}), Corollary \ref{C3} and Lemma \ref{L4.3} for any $x\in D_p$ such that $2x+1\not\equiv 0$ (mod $p$),
we have
\begin{align*}
\sum_{k=0}^{\lfloor p/6\rfloor}(2k+1)S_k&\left(\frac{(x+1)(2x-1)^2}{216}\right)^k
\equiv \frac{1}{2}(-1)^{\frac{p-1}{2}}w_{\lfloor\frac{p}{6}\rfloor}(4x^3-3x) \\
&\equiv \frac{x}{2(2x+1)}\left(\frac{2-2x}{p}\right)+\frac{x+1}{2(2x+1)}
\left(\frac{6x+6}{p}\right) \pmod{p},
\end{align*}
\begin{align*}
\sum_{k=0}^{p-1}(2k+1)S_k&\left(\frac{(x+1)(2x-1)^2}{216}\right)^k
\equiv \frac{1}{12}(-1)^{\frac{p-1}{2}}\bigl(5w_{\lfloor\frac{p}{6}\rfloor}(4x^3-3x)+
w_{\lfloor\frac{5p}{6}\rfloor}(4x^3-3x)\bigr) \\ &\equiv
\frac{x(2x^2+x+2)}{6(2x+1)}\left(\frac{2-2x}{p}\right)-
\frac{(x+1)^2(2x-3)}{6(2x+1)}\left(\frac{6x+6}{p}\right) \pmod{p}.
\end{align*}
Replacing $x$ by $(12t+1)/2$, we conclude the proof.
\end{proof}
The next theorem gives a criterion for $c\in C_j(p)$ in terms of values of the sums $\sum_{k=0}^{p-1}S_kt^k$ and $\sum_{k=0}^{p-1}(2k+1)S_kt^k$ modulo $p$.
\begin{theorem} \label{Sc}
Let $p$ be a prime, $p>3$, and let $c\in D_p$ with $c^2\not\equiv -3\pmod{p}$. Then
\begin{equation*}
\left(\frac{3(c^2+3)}{p}\right)\cdot
\sum_{k=0}^{p-1}S_k\left(\frac{c^2}{108(c^2+3)}\right)^k\equiv
\begin{cases}
\ds
1/2 \pmod{p},
&  \text{if $c\in C_0(p)$;} \\
\frac{9-2c}{8c} \pmod{p}, &   \text{if $c\in C_1(p)$;} \\
-\frac{9+2c}{8c} \pmod{p}, &   \text{if $c\in C_2(p)$}
\end{cases}
\end{equation*}
and
\begin{equation*}
\left(\frac{c^2+3}{p}\right)\cdot\sum_{k=0}^{p-1}
\frac{(2k+1)S_k}{36^k(3+c^2)^k}\equiv
\begin{cases}
\ds
1/2 \pmod{p},
&  \text{if $c\in C_0(p)$;} \\
\frac{2-c}{4c} \pmod{p}, &   \text{if $c\in C_1(p)$;} \\
-\frac{2+c}{4c} \pmod{p}, &   \text{if $c\in C_2(p)$.}
\end{cases}
\end{equation*}
\end{theorem}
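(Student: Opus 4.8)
The plan is to reduce both congruences to the evaluation of $w_{\lfloor p/6\rfloor}$ and $w_{\lfloor 5p/6\rfloor}$ at the single point $\frac{3-c^2}{3+c^2}$, and then to compute these two values modulo $p$ by the Eisenstein-integer method already used in the proof of Theorem \ref{TR}. First I would put $t=\frac{c^2}{108(c^2+3)}$ in the first congruence of Corollary \ref{C3} and $t=\frac{1}{36(c^2+3)}$ in the second; in both cases a direct computation gives $1-216t=216t-1=\frac{3-c^2}{3+c^2}$. Thus the two target sums become, up to the explicit prefactors $\frac18\left(\frac p3\right)$ and $\frac1{12}(-1)^{(p-1)/2}$, the linear combinations $5w_{\lfloor p/6\rfloor}-w_{\lfloor 5p/6\rfloor}$ and $w_{\lfloor 5p/6\rfloor}+5w_{\lfloor p/6\rfloor}$ of the same two Jacobi values at $\frac{3-c^2}{3+c^2}$.

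The heart of the argument is a lemma evaluating $w_{\lfloor p/6\rfloor}$ and $w_{\lfloor 5p/6\rfloor}$ at $\frac{3-c^2}{3+c^2}$ modulo $p$ in terms of the sets $C_j(p)$. I would start from the closed form (\ref{tr2}), insert $n=\lfloor p/6\rfloor$ and $n=\lfloor 5p/6\rfloor$, and treat the cases $p\equiv1$ and $p\equiv5\pmod6$ separately. Writing $A=c+1+2\omega$ and $B=c-1-2\omega$, so that $AB=c^2+3$ and $A+B=2c$, the powers $A^{2n+1}$ and $B^{2n+1}$ reduce through $A^{(p-1)/3}$ and $B^{(p-1)/3}$ to the cubic residue character $\chi=\left(\frac{c+1+2\omega}{p}\right)_3$ exactly as in (\ref{tr4})--(\ref{tr5}), using Lemma \ref{LR}$(i)$ when $p\equiv1\pmod3$ (working modulo $\pi$ and then noting the result is rational) and Lemma \ref{LR}$(ii)$ when $p\equiv2\pmod3$. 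The new feature, absent from Theorem \ref{TR}, is that the sixth-order index leaves the extra power $(c^2+3)^{\lfloor p/6\rfloor}$, which when combined with the cubic normalization $(c^2+3)^{\mp(p-1)/3}$ collapses to the net power $(c^2+3)^{\pm(p-1)/2}\equiv\left(\frac{c^2+3}{p}\right)\pmod p$ (the exact exponent being irrelevant since this is a $\pm1$ raised to an odd power). Carrying out the elementary simplification of $\frac1{2c}(B\chi+A\chi^{-1})$ and $\frac1{2c}(A\chi+B\chi^2)$ gives the values $1$, $\frac{3-c}{2c}$, $-\frac{3+c}{2c}$ (in an order depending on $j$) on $C_0(p), C_1(p), C_2(p)$, so that both $w$-values equal $(-1)^{\lfloor p/6\rfloor}\left(\frac{c^2+3}{p}\right)$ times one of these three quantities.

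Finally I would assemble the pieces, writing $v_1$ and $v_2$ for the three case-values of $w_{\lfloor p/6\rfloor}$ and $w_{\lfloor 5p/6\rfloor}$ left after stripping the common factor $(-1)^{\lfloor p/6\rfloor}\left(\frac{c^2+3}{p}\right)$. For the $S_k$ sum the prefactor $\left(\frac p3\right)$ multiplies this common factor, and the elementary identity $\left(\frac p3\right)(-1)^{\lfloor p/6\rfloor}=\left(\frac3p\right)$ (checked across the residue classes modulo $12$) turns the global factor into $\left(\frac3p\right)\left(\frac{c^2+3}{p}\right)=\left(\frac{3(c^2+3)}{p}\right)$, so multiplying through by this $\pm1$ clears it, while the combination $\frac18(5v_1-v_2)$ produces $\frac12$, $\frac{9-2c}{8c}$, $-\frac{9+2c}{8c}$ on $C_0,C_1,C_2$. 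For the $(2k+1)S_k$ sum the prefactor is $(-1)^{(p-1)/2}$ instead of $\left(\frac p3\right)$, and here $(-1)^{(p-1)/2+\lfloor p/6\rfloor}=1$ (equivalently $\frac{2(p-1)}{3}$ is even), so only the quadratic symbol $\left(\frac{c^2+3}{p}\right)$ survives and $\frac1{12}(v_2+5v_1)$ gives $\frac12$, $\frac{2-c}{4c}$, $-\frac{2+c}{4c}$. I expect the main obstacle to be the careful bookkeeping of the two parity and quadratic factors $(-1)^{\lfloor p/6\rfloor}$ and $(c^2+3)^{\pm(p-1)/2}$ emerging from the sixth root, together with verifying the sign identities uniformly over $p \bmod 12$ and redoing the Eisenstein computation in both the split and inert cases; the cubic-character part itself is essentially a transcription of the proof of Theorem \ref{TR}.
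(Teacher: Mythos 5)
Your proposal is correct and follows essentially the same route as the paper's own proof: specialize Corollary \ref{C3} at $t=\frac{c^2}{108(c^2+3)}$ and $t=\frac{1}{36(c^2+3)}$, evaluate $w_{\lfloor p/6\rfloor}$ and $w_{\lfloor 5p/6\rfloor}$ at $\frac{3-c^2}{3+c^2}$ via (\ref{tr2}) and Lemma \ref{LR} in the two cases $p\equiv 1, 5 \pmod{6}$, and assemble using the sign identity $\left(\frac{p}{3}\right)(-1)^{(p-1)/2}=\left(\frac{3}{p}\right)$. The paper's formulas (\ref{p16}) and (\ref{5p1}) carry exactly your prefactor $(-1)^{(p-1)/2}\left(\frac{c^2+3}{p}\right)=(-1)^{\lfloor p/6\rfloor}\left(\frac{c^2+3}{p}\right)$ and your three case values $1$, $\frac{3-c}{2c}$, $-\frac{3+c}{2c}$, so the two arguments coincide.
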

\begin{proof}
  From Corollary \ref{C3} we have
\begin{align}
\sum_{k=0}^{p-1}S_k\left(\frac{c^2}{108(3+c^2)}\right)^k&\equiv
\frac{1}{8}\left(\frac{p}{3} \right)\left(5w_{\lfloor\frac{p}{6}\rfloor}\left(\frac{3-c^2}{3+c^2}\right)
-w_{\lfloor\frac{5p}{6}\rfloor}\left(\frac{3-c^2}{3+c^2}\right)\right) \pmod{p},
\label{S0} \\
\sum_{k=0}^{p-1}\frac{(2k+1)S_k}{36^k(3+c^2)^k}&\equiv
\frac{(-1)^{\frac{p-1}{2}}}{12} \left(5w_{\lfloor\frac{p}{6}\rfloor}\left(\frac{3-c^2}{3+c^2}\right)
+w_{\lfloor\frac{5p}{6}\rfloor}\left(\frac{3-c^2}{3+c^2}\right)\right)
\pmod{p} \label{S1}.
\end{align}
If $p\equiv 1$ (mod $6$), then $p$ splits into the product of primes in
${\mathbb Z}[\omega]$, $p=\pi\overline{\pi}$ with $\pi\equiv 2$ (mod~$3$) and, by (\ref{tr2}), we easily find
$$
w_{\lfloor\frac{p}{6}\rfloor}\left(\frac{3-c^2}{3+c^2}\right)=
\frac{(-1)^{\frac{p-1}{6}}}{2c(c^2+3)^{\frac{p-1}{6}}}\left(
(c-1-2\omega)^{\frac{p-1}{3}+1}+(c+1+2\omega)^{\frac{p-1}{3}+1}\right).
$$
Applying Lemma \ref{LR}, we have
$$
w_{\lfloor\frac{p}{6}\rfloor}\left(\frac{3-c^2}{3+c^2}\right)\!\equiv\!
\frac{(-1)^{\frac{p-1}{6}}}{2c(c^2+3)^{\frac{p-1}{2}}}\left(\!
(c-1-2\omega)\left(
\!\frac{c+1+2\omega}{p}\!\right)_3\!\!\!+
(c+1+2\omega)
\left(\!\frac{c+1+2\omega}{p}\!\right)^{-1}_3\right)
$$
modulo $\pi$ and therefore,
\begin{equation} \label{p16}
(-1)^{\frac{p-1}{2}}\left(\frac{c^2+3}{p}\right)\cdot
w_{\lfloor\frac{p}{6}\rfloor}\left(\frac{3-c^2}{3+c^2}\right)\equiv
\begin{cases}
\ds
1 \pmod{p},
&  \text{if $c\in C_0(p)$;} \\
\frac{3-c}{2c} \pmod{p}, &   \text{if $c\in C_1(p)$;} \\
-\frac{3+c}{2c} \pmod{p}, &   \text{if $c\in C_2(p)$.}
\end{cases}
\end{equation}
If $p\equiv 5$ (mod $6$), then
$$
w_{\lfloor\frac{p}{6}\rfloor}\left(\frac{3-c^2}{3+c^2}\right)=
\frac{(-1)^{\frac{p-5}{6}}}{2c(c^2+3)^{\frac{p-5}{6}}}\left(
(c-1-2\omega)^{\frac{p+1}{3}-1}+(c+1+2\omega)^{\frac{p+1}{3}-1}\right).
$$
Now, by Lemma \ref{LR}, we get
$$
w_{\lfloor\frac{p}{6}\rfloor}\left(\frac{3-c^2}{3+c^2}\right)\!\equiv\!
\frac{(-1)^{\frac{p-5}{6}}}{2c(c^2+3)^{\frac{p-1}{2}}}\left(\!
(c+1+2\omega)\left(\!
\frac{c+1+2\omega}{p}\!\right)_3^{-1}\!\!\!+
(c-1-2\omega)
\left(\!\frac{c+1+2\omega}{p}\!\right)_3\right)
$$
modulo  $p$ and therefore,
\begin{equation} \label{p56}
(-1)^{\frac{p-5}{6}}\left(\frac{c^2+3}{p}\right)\cdot
w_{\lfloor\frac{p}{6}\rfloor}\left(\frac{3-c^2}{3+c^2}\right)\equiv
\begin{cases}
\ds
1 \pmod{p},
&  \text{if $c\in C_0(p)$;} \\
\frac{3-c}{2c} \pmod{p}, &   \text{if $c\in C_1(p)$;} \\
-\frac{3+c}{2c} \pmod{p}, &   \text{if $c\in C_2(p)$.}
\end{cases}
\end{equation}
Comparing (\ref{p16}) and (\ref{p56}), we get that (\ref{p16}) holds  for all primes $p>3$.

Applying the similar argument for evaluation of $w_{\lfloor\frac{5p}{6}\rfloor}\bigl(\frac{3-c^2}{3+c^2}\bigr)$, we see that if $p\equiv 1$ (mod~$6$), then $5p\equiv 5$ (mod $6$) and we have
$$
w_{\lfloor\frac{5p}{6}\rfloor}\left(\frac{3-c^2}{3+c^2}\right)=
\frac{(-1)^{\frac{p-1}{6}}}{2c(c^2+3)^{\frac{5(p-1)}{6}}}\left(
(c-1-2\omega)^{\frac{5(p-1)}{3}+1}+(c+1+2\omega)^{\frac{5(p-1)}{3}+1}\right).
$$
Now, by Lemma \ref{LR}, we easily find
$$
w_{\lfloor\frac{5p}{6}\rfloor}\left(\frac{3\!-\!c^2}{3\!+\!c^2}\right)\!\equiv\!
\frac{(-1)^{\frac{p-1}{6}}}{2c(c^2+3)^{\frac{5(p-1)}{2}}}\!\left(\!
(c-1-2\omega)\!\left(\!
\frac{c+1+2\omega}{p}\!\right)^{\!5}_3
\!\!\!+\!
(c+1+2\omega)\!
\left(\!\frac{c+1+2\omega}{p}\!\right)^{\!-5}_3\right)
$$
modulo $\pi$, which implies
\begin{equation} \label{5p1}
(-1)^{\frac{p-1}{2}}\left(\frac{c^2+3}{p}\right)\cdot
w_{\lfloor\frac{5p}{6}\rfloor}\left(\frac{3-c^2}{3+c^2}\right)\equiv
\begin{cases}
\ds
1 \pmod{p},
&  \text{if $c\in C_0(p)$;} \\
-\frac{3+c}{2c} \pmod{p}, &  \text{if $c\in C_1(p)$;} \\
\frac{3-c}{2c} \pmod{p}, &   \text{if $c\in C_2(p)$.}
\end{cases}
\end{equation}
Similarly, if $p\equiv 5$ (mod $6$), then $5p\equiv 1$ (mod $6$) and we have
$$
w_{\lfloor\frac{5p}{6}\rfloor}\left(\frac{3-c^2}{3+c^2}\right)=
\frac{(-1)^{\frac{5p-1}{6}}}{2c(c^2+3)^{\frac{5p-1}{6}}}\left(
(c-1-2\omega)^{\frac{5p+2}{3}}+(c+1+2\omega)^{\frac{5p+2}{3}}\right).
$$
By Lemma \ref{LR}, we readily get
$$
w_{\lfloor\frac{5p}{6}\rfloor}\left(\frac{3\!-\!c^2}{3\!+\!c^2}\right)\equiv
\frac{(-1)^{\frac{5p-1}{6}}}{2c(c^2+3)^{\frac{5p-1}{2}}}\left(\!
(c+1+2\omega)\left(\!
\frac{c+1+2\omega}{p}\!\right)_3^{\!-5}\!\!\!\!\!+\!
(c-1-2\omega)
\left(\!\frac{c+1+2\omega}{p}\!\right)^{\!5}_3\right)
$$
modulo $p$
and therefore after simplification we obtain that (\ref{5p1}) holds for all primes $p>3$.
Finally, substituting (\ref{p16}) and (\ref{5p1}) into (\ref{S0}) and
(\ref{S1}), we get the congruences of the theorem.
\end{proof}
  From Theorem \ref{Sc} with $c=-9M/L$  and $c=L/3M$ and criterion (\ref{cr}) we get the following congruences.
\begin{theorem} \label{MqLS}
Let $q$ be a prime, $q\equiv 1 \pmod{3}$ and so $4q=L^2+27M^2$ with $L, M\in {\mathbb Z}$ and $L\equiv 1\pmod{3}$. Let $p$ be a prime with
$p\ne 2, 3, q$, and let $p\nmid LM$. Then
\begin{align*}
\left(\frac{q}{p}\right)\sum_{k=0}^{p-1}S_k\frac{M^{2k}}{(16q)^k}&\equiv
\begin{cases}
\ds
1/2 \pmod{p},
&  \text{if $p^{\frac{q-1}{3}}\equiv 1\pmod{q}$;} \\[3pt]
\frac{\pm L-2M}{8M} \pmod{p}, &  \text{if $p^{\frac{q-1}{3}}\equiv \frac{-1\pm L/(3M)}{2}$} \pmod{q},
\end{cases}
\\[7pt]
\left(\frac{3q}{p}\right)\sum_{k=0}^{p-1}S_k\frac{L^{2k}}{(432q)^k}&\equiv
\begin{cases}
\ds
1/2 \pmod{p},
&  \text{if $p^{\frac{q-1}{3}}\equiv 1\pmod{q}$;} \\[3pt]
\frac{\pm 27M-2L}{8L} \pmod{p}, &   \text{if $p^{\frac{q-1}{3}}\equiv \frac{-1\pm 9M/L}{2}$} \pmod{q}
\end{cases}
\end{align*}
and
\begin{align*}
\left(\frac{q}{p}\right)\sum_{k=0}^{p-1}(2k+1)S_k\frac{M^{2k}}{(16q)^k}&\equiv
\begin{cases}
\ds
1/2 \pmod{p},
&  \text{if $p^{\frac{q-1}{3}}\equiv 1\pmod{q}$;} \\[3pt]
\frac{\pm 6M-L}{4L} \pmod{p}, &   \text{if $p^{\frac{q-1}{3}}\equiv \frac{-1\pm 9M/L}{2}$} \pmod{q},
\end{cases}
\\[7pt]
\left(\frac{3q}{p}\right)\sum_{k=0}^{p-1}(2k+1)S_k\frac{L^{2k}}{(432q)^k}&\equiv
\begin{cases}
\ds
1/2 \pmod{p},
&  \text{if $p^{\frac{q-1}{3}}\equiv 1\pmod{q}$;} \\[3pt]
\frac{\pm 2L-9M}{36M} \pmod{p}, &   \text{if $p^{\frac{q-1}{3}}\equiv \frac{-1\pm L/(3M)}{2}$} \pmod{q}.
\end{cases}
\end{align*}
\end{theorem}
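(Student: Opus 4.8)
The plan is to deduce all four congruences from Theorem~\ref{Sc} by the two substitutions announced just above the statement, exactly as Theorem~\ref{MqL} was obtained from Theorem~\ref{TR}. Setting $c=L/(3M)$, the relation $4q=L^2+27M^2$ gives $c^2+3=\frac{4q}{9M^2}$, so the power base $\frac{c^2}{108(c^2+3)}$ collapses to $\frac{L^2}{432q}$ while $\frac{1}{36(3+c^2)}$ collapses to $\frac{M^2}{16q}$; setting instead $c=-9M/L$ gives $c^2+3=\frac{12q}{L^2}$, which turns those same bases into $\frac{M^2}{16q}$ and $\frac{L^2}{432q}$. In every case the Legendre prefactors simplify once the square factors $4,36,L^2,M^2$ are pulled out: $\bigl(\frac{3(c^2+3)}{p}\bigr)$ reduces to $\bigl(\frac{3q}{p}\bigr)$ or $\bigl(\frac{q}{p}\bigr)$ and $\bigl(\frac{c^2+3}{p}\bigr)$ reduces to $\bigl(\frac{q}{p}\bigr)$ or $\bigl(\frac{3q}{p}\bigr)$, matching the four left-hand sides. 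The hypotheses $p\ne 2,3,q$ and $p\nmid LM$ ensure $c\in D_p$ and $c^2+3\not\equiv 0\pmod p$, so Theorem~\ref{Sc} indeed applies.

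With these substitutions Theorem~\ref{Sc} expresses each sum through the class $C_0(p),C_1(p),C_2(p)$ containing $c$, and after clearing $c$ the $C_1$/$C_2$ values are exactly the $\frac{\pm(\cdots)}{(\cdots)}$ expressions in the statement; for instance, for $c=L/(3M)$ the $C_1$-value $\frac{9-2c}{8c}$ becomes $\frac{27M-2L}{8L}$, the ``$+$'' case of $\frac{\pm 27M-2L}{8L}$. Since $\frac{L}{3M}\cdot\bigl(-\frac{9M}{L}\bigr)=-3$, property~(iv) of the sets $C_j$ shows that $-9M/L\in C_j(p)$ if and only if $L/(3M)\in C_j(p)$, so both substitutions read off the same class and it suffices to locate $L/(3M)$ among the $C_j(p)$.

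The remaining ingredient is criterion~(\ref{cr}) applied with the roles of $p$ and $q$ interchanged: here it is $q$ that satisfies $4q=L^2+27M^2$, so cubic residuacity is measured modulo $q$, and because $p\nmid LM$ the criterion gives $L/(3M)\in C_j(p)$ if and only if $p^{(q-1)/3}\equiv\bigl((-1-L/(3M))/2\bigr)^{\,j}\pmod q$. The case $j=0$ produces $p^{(q-1)/3}\equiv 1$ together with the constant $1/2$. For $j=1,2$ one uses $(L/(3M))^2\equiv -3\pmod q$, so the two nontrivial cube roots of unity modulo $q$ are precisely $(-1\pm L/(3M))/2$, while the congruence $\frac{L}{3M}+\frac{9M}{L}=\frac{4q}{3ML}\equiv 0\pmod q$ rewrites these as $(-1\mp 9M/L)/2$. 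Inserting the form of the condition that matches the given sum and aligning signs yields the four stated congruences.

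I expect the only genuine difficulty to lie in this final sign bookkeeping: the choice $c=L/(3M)$ versus $c=-9M/L$, the passage from $C_1$ to $C_2$, and the replacement $L/(3M)\equiv -9M/L\pmod q$ each flip a sign, and one must check that the surviving $\pm$ in each value line up with the $\pm$ in the corresponding residue condition across all four congruences. This is mechanical but error-prone, so I would carry out one congruence in full and obtain the remaining three from the symmetry between the two substitutions.
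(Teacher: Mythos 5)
Your proposal is correct and follows essentially the same route as the paper: the paper obtains Theorem~\ref{MqLS} exactly by substituting $c=L/(3M)$ and $c=-9M/L$ into Theorem~\ref{Sc}, invoking property (iv) of the sets $C_j$ to identify the two classes, and applying criterion (\ref{cr}) with the roles of $p$ and $q$ interchanged, just as in the proof of Theorem~\ref{MqL}. Your algebraic reductions of the bases and Legendre prefactors, and the sign bookkeeping you spot-checked (e.g., the $C_1$-value $\frac{9-2c}{8c}$ becoming $\frac{27M-2L}{8L}$, paired with the condition $p^{(q-1)/3}\equiv(-1+9M/L)/2\pmod q$), all check out.
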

For example, if $q=7$, then $4q=L^2+27M^2$ with $L=M=1$ and by Theorem~\ref{MqLS}, we get the following numerical congruences.
\begin{corollary}
Let $p$ be a prime, $p\ne 2, 3, 7$. Then
\begin{align*}
\left(\frac{7}{p}\right)\sum_{k=0}^{p-1}\frac{S_k}{112^k}&\equiv
\begin{cases}
\ds
1/2 \pmod{p},
&  \text{if $p\equiv \pm 1\pmod{7}$;} \\
\ds-3/8 \pmod{p}, &   \text{if $p\equiv \pm 2 \pmod{7}$;} \\
\ds-1/8 \pmod{p}, &   \text{if $p\equiv \pm 3 \pmod{7}$,}
\end{cases}
\\[7pt]
\left(\frac{21}{p}\right)\sum_{k=0}^{p-1}\frac{S_k}{3024^k}&\equiv
\begin{cases}
\ds
1/2 \pmod{p},
&  \text{if $p\equiv \pm 1\pmod{7}$;} \\
\ds 25/8 \pmod{p}, &   \text{if $p\equiv \pm 2 \pmod{7}$;} \\
\ds-29/8 \pmod{p}, &  \text{if $p\equiv \pm 3 \pmod{7}$,}
\end{cases}
\\[7pt]
\left(\frac{7}{p}\right)\sum_{k=0}^{p-1}\frac{(2k+1)S_k}{112^k}&
\equiv  \begin{cases}
\ds
1/2 \pmod{p},
&  \text{if $p\equiv \pm 1\pmod{7}$;} \\
\ds 5/4 \pmod{p}, &   \text{if $p\equiv \pm 2 \pmod{7}$;} \\
\ds-7/4 \pmod{p}, &   \text{if $p\equiv \pm 3 \pmod{7}$,}
\end{cases}
\\[7pt]
\left(\frac{21}{p}\right)\sum_{k=0}^{p-1}\frac{(2k+1)S_k}{3024^k}&
\equiv
\begin{cases}
\ds
1/2 \pmod{p},
&  \text{if $p\equiv \pm 1\pmod{7}$;} \\
\ds -11/36 \pmod{p}, &   \text{if $p\equiv \pm 2 \pmod{7}$;} \\
\ds-7/36 \pmod{p}, &   \text{if $p\equiv \pm 3 \pmod{7}$.}
\end{cases}
\end{align*}
\end{corollary}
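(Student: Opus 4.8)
The plan is to specialize Theorem~\ref{MqLS} to the prime $q=7$, for which the representation $4q=L^2+27M^2$ with $L\equiv 1\pmod 3$ forces $L=M=1$, since $28=1+27$. With these values the four sums in the theorem become exactly $\sum_{k=0}^{p-1}S_k/112^k$, $\sum_{k=0}^{p-1}S_k/3024^k$ and their $(2k+1)$-weighted analogues, because $M^2/(16q)=1/112$ and $L^2/(432q)=1/3024$; the Legendre prefactors $\bigl(\frac{q}{p}\bigr)$ and $\bigl(\frac{3q}{p}\bigr)$ likewise reduce to $\bigl(\frac{7}{p}\bigr)$ and $\bigl(\frac{21}{p}\bigr)$. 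Thus the corollary is a pure substitution, and the only real work is to translate the cubic-residue condition into residue classes of $p$ modulo $7$.

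First I would compute the exponent $(q-1)/3=2$, so the governing quantity $p^{(q-1)/3}\bmod q$ is simply $p^2\bmod 7$. A direct check of squares modulo $7$ gives $p\equiv\pm1\Rightarrow p^2\equiv1$, $p\equiv\pm2\Rightarrow p^2\equiv4$, and $p\equiv\pm3\Rightarrow p^2\equiv2\pmod 7$. Next I would evaluate the two special values appearing in the case distinctions: since $3^{-1}\equiv5\pmod 7$ we have $L/(3M)\equiv5$ and $9M/L\equiv2\pmod 7$, whence, using $2^{-1}\equiv4\pmod 7$, the half-values satisfy $\tfrac{-1\pm L/(3M)}{2}\equiv 2,4$ and $\tfrac{-1\pm 9M/L}{2}\equiv 4,2\pmod 7$ for the $+,-$ signs respectively.

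With these dictionaries in hand, the matching is mechanical. For the first congruence, the case $p^2\equiv1$ (i.e.\ $p\equiv\pm1$) yields $1/2$; the $+$ branch $p^2\equiv2$ (i.e.\ $p\equiv\pm3$) yields $\tfrac{L-2M}{8M}=-1/8$; and the $-$ branch $p^2\equiv4$ (i.e.\ $p\equiv\pm2$) yields $\tfrac{-L-2M}{8M}=-3/8$. The same reading off of signs against residue classes produces the remaining three congruences, where the closed forms $\tfrac{\pm27M-2L}{8L}$, $\tfrac{\pm6M-L}{4L}$ and $\tfrac{\pm2L-9M}{36M}$ evaluate at $L=M=1$ to $25/8,-29/8$; $5/4,-7/4$; and $-7/36,-11/36$ respectively.

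The main obstacle, such as it is, is purely bookkeeping. Two distinct quantities appear in the case distinctions of Theorem~\ref{MqLS}, namely $L/(3M)\equiv5$ and $9M/L\equiv2\pmod 7$, and each of the four sums is governed by one or the other; since $5\ne2\pmod 7$, the associated half-values land on the squares $2$ and $4$ in opposite orders. One must therefore check, for each of the four sums separately, which of $p\equiv\pm2$ and $p\equiv\pm3$ corresponds to the $+$ branch, and this sign-matching is the only place an error could creep in. There is no analytic difficulty once Theorem~\ref{MqLS} is invoked.
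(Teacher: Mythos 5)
Your proposal is correct and is exactly the paper's argument: the paper proves this corollary by the single observation that $4\cdot 7=1^2+27\cdot 1^2$, so $L=M=1$, and then reads the four congruences off Theorem~\ref{MqLS}. Your residue-class bookkeeping (matching $p^2\bmod 7\in\{1,2,4\}$ against $\frac{-1\pm L/(3M)}{2}\equiv 2,4$ and $\frac{-1\pm 9M/L}{2}\equiv 4,2\pmod 7$, with the correct governing quantity for each of the four sums) checks out in every case.
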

Similarly, setting $q=13, 19, 31$ in Theorem \ref{MqLS}, we obtain the following congruences.
\begin{corollary}
Let $p$ be a prime, $p\ne 2, 3, 5, 13$. Then
\begin{align*}
\left(\frac{13}{p}\right)\sum_{k=0}^{p-1}\frac{S_k}{208^k}&\equiv
\begin{cases}
\ds
1/2 \pmod{p},
&  \text{if $p\equiv \pm 1, \pm 5\pmod{13}$;} \\
\ds-7/8 \pmod{p}, &   \text{if $p\equiv \pm 2,\pm 3 \pmod{13}$;} \\
\ds 3/8 \pmod{p}, &   \text{if $p\equiv \pm 4, \pm 6 \pmod{13}$,}
\end{cases}
\\[7pt]
\left(\frac{39}{p}\right)\sum_{k=0}^{p-1}S_k\left(\frac{25}{5616}\right)^k&\equiv
\begin{cases}
\ds
1/2 \pmod{p},
&  \text{if $p\equiv \pm 1, \pm 5\pmod{13}$;} \\
\ds 17/40 \pmod{p}, &   \text{if $p\equiv \pm 2, \pm 3 \pmod{13}$;} \\
\ds-37/40 \pmod{p}, &  \text{if $p\equiv \pm 4, \pm 6 \pmod{13
}$,}
\end{cases}
\\[7pt]
\left(\frac{13}{p}\right)\sum_{k=0}^{p-1}\frac{(2k+1)S_k}{208^k}&
\equiv
\begin{cases}
\ds
1/2 \pmod{p},
& \text{if $p\equiv \pm 1, \pm 5\pmod{13}$;} \\
\ds 1/20 \pmod{p}, &  \text{if $p\equiv \pm 2,\pm 3 \pmod{13}$;} \\
\ds -11/20 \pmod{p}, &  \text{if $p\equiv \pm 4, \pm 6 \pmod{13}$,}
\end{cases}
\\[7pt]
\left(\frac{39}{p}\right)\sum_{k=0}^{p-1}(2k+1)S_k\left(\frac{25}{5616}\right)^k&\equiv
\begin{cases}
\ds
1/2 \pmod{p},
&  \text{if $p\equiv \pm 1, \pm 5\pmod{13}$;} \\
\ds -19/36 \pmod{p}, &   \text{if $p\equiv \pm 2, \pm 3 \pmod{13}$;} \\
\ds 1/36 \pmod{p}, &   \text{if $p\equiv \pm 4, \pm 6 \pmod{13
}$.}
\end{cases}
\end{align*}
\end{corollary}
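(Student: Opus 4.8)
The plan is to obtain the corollary as the $q=13$ instance of Theorem \ref{MqLS}. First I would determine $L$ and $M$: the requirement $4q=L^2+27M^2$ becomes $52=L^2+27M^2$, whose only solution with $L\equiv 1\pmod 3$ is $L=-5$, $M=1$ (the sign of $M$ is immaterial for the residue-class formulation of the corollary). With these values one has $16q=208$, $432q=5616$, $L^2=25$ and $M^2=1$, so the four sums of Theorem \ref{MqLS} become $\sum_k S_k/208^k$, $\sum_k S_k(25/5616)^k$, $\sum_k(2k+1)S_k/208^k$ and $\sum_k(2k+1)S_k(25/5616)^k$, while the prefactors $(\frac{q}{p})$ and $(\frac{3q}{p})$ specialize to $(\frac{13}{p})$ and $(\frac{39}{p})$, matching the corollary verbatim.

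The main work is to replace the hypotheses $p^{(q-1)/3}\equiv\cdots\pmod q$ of Theorem \ref{MqLS} by explicit congruence conditions on $p$ modulo $13$. Here $(q-1)/3=4$, so $p^4\bmod 13$ depends only on the coset of $p$ modulo the subgroup $H$ of nonzero cubic residues, and it runs through the three cube roots of unity $1,3,9$. A direct check shows $H=\{\pm 1,\pm 5\}$, on which $p^4\equiv 1$; the coset $\{\pm 2,\pm 3\}$ gives $p^4\equiv 2^4\equiv 3$; and $\{\pm 4,\pm 6\}$ gives $p^4\equiv 4^4\equiv 9$. Hence the three classes $p\equiv\pm 1,\pm 5$; $p\equiv\pm 2,\pm 3$; $p\equiv\pm 4,\pm 6\pmod{13}$ correspond respectively to $p^{(q-1)/3}\equiv 1,3,9\pmod{13}$, which converts every case distinction of Theorem \ref{MqLS} into the residue-class language of the corollary.

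It then remains to identify which of $1,3,9$ is named by each right-hand option of Theorem \ref{MqLS} and to evaluate the attached constant. Using $3^{-1}\equiv 9$ and $2^{-1}\equiv 7\pmod{13}$ I would compute $L/(3M)\equiv 7$ and $9M/L\equiv 6\pmod{13}$, so that $\frac{-1+L/(3M)}{2}\equiv 3$, $\frac{-1-L/(3M)}{2}\equiv 9$ and $\frac{-1+9M/L}{2}\equiv 9$, $\frac{-1-9M/L}{2}\equiv 3$. Substituting $L=-5$, $M=1$ into the closed forms $\frac{\pm L-2M}{8M}$, $\frac{\pm 27M-2L}{8L}$, $\frac{\pm 6M-L}{4L}$ and $\frac{\pm 2L-9M}{36M}$ then yields the constants $-7/8,3/8$; $17/40,-37/40$; $1/20,-11/20$; $-19/36,1/36$ in the appropriate classes. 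I expect the only real subtlety to be this sign bookkeeping: one must correctly pair the $\pm$ in each hypothesis with the $\pm$ in the conclusion, and it is reassuring that $L/(3M)\equiv -9M/L\pmod q$, in accordance with property (iv) of the sets $C_j$ invoked in Theorem \ref{MqLS}. The remaining cases $q=19$ (with $L=7$, $M=1$) and $q=31$ (with $L=4$, $M=2$) are handled by exactly the same computation.
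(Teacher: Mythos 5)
Your proposal is correct and is exactly the paper's route: the paper obtains this corollary by setting $q=13$ in Theorem \ref{MqLS}, which forces $L=-5$, $M=\pm 1$ (so $16q=208$, $432q=5616$, $L^2=25$), and your identification of the cubic-residue cosets $\{\pm 1,\pm 5\}$, $\{\pm 2,\pm 3\}$, $\{\pm 4,\pm 6\}$ modulo $13$ with $p^{4}\equiv 1, 3, 9 \pmod{13}$, together with $L/(3M)\equiv 7$, $9M/L\equiv 6 \pmod{13}$, yields precisely the stated constants. The sign bookkeeping you flag checks out in all four cases, so the argument is complete.
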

\begin{corollary}
Let $p$ be a prime, $p\ne 2, 3, 7, 19$. Then
\begin{align*}
\left(\frac{19}{p}\right)\sum_{k=0}^{p-1}\frac{S_k}{304^k}&\equiv
\begin{cases}
\ds
1/2 \pmod{p},
&  \text{if $p\equiv \pm 1, \pm 7, \pm 8\pmod{19}$;} \\
\ds 5/8 \pmod{p}, &  \text{if $p\equiv \pm 2,\pm 3, \pm 5 \pmod{19}$;} \\
\ds -9/8 \pmod{p}, &   \text{if $p\equiv \pm 4, \pm 6, \pm 9 \pmod{19}$,}
\end{cases}
\\[7pt]
\left(\frac{19}{p}\right)\sum_{k=0}^{p-1}\frac{(2k+1)S_k}{304^k}&
\equiv
\begin{cases}
\ds
1/2 \pmod{p},
&  \text{if $p\equiv \pm 1, \pm 7, \pm 8\pmod{19}$;} \\
\ds -13/28 \pmod{p}, &  \text{if $p\equiv \pm 2,\pm 3, \pm 5 \pmod{19}$;} \\
\ds -1/28 \pmod{p}, &   \text{if $p\equiv \pm 4, \pm 6, \pm 9 \pmod{19}$.}
\end{cases}
\end{align*}
\end{corollary}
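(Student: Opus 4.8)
The plan is to obtain this corollary as the special case $q=19$ of Theorem~\ref{MqLS}. First I would record the representation $4\cdot 19=76=7^{2}+27\cdot 1^{2}$, which forces $L=7$ and $M=1$ (the sign of $M$ is irrelevant since only $M^{2}$ enters, and $L=7\equiv 1\pmod 3$ as required). With these values $16q=304$ and $M^{2}=1$, so the two sums of Theorem~\ref{MqLS} specialize exactly to $\sum_{k=0}^{p-1}S_k/304^{k}$ and $\sum_{k=0}^{p-1}(2k+1)S_k/304^{k}$, while the prefactor $\left(\frac{q}{p}\right)$ becomes $\left(\frac{19}{p}\right)$.

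Next I would evaluate the right-hand constants. For the first family, $\frac{\pm L-2M}{8M}=\frac{\pm 7-2}{8}$ gives $5/8$ and $-9/8$; for the second family, $\frac{\pm 6M-L}{4L}=\frac{\pm 6-7}{28}$ gives $-1/28$ and $-13/28$. It then remains to translate the hypotheses of Theorem~\ref{MqLS}, which are stated in terms of $p^{(q-1)/3}=p^{6}\pmod{19}$, into the residue classes of $p\pmod{19}$ appearing in the statement, and to pair each class with the correct sign.

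For this translation I would use that $(\mathbb{Z}/19\mathbb{Z})^{*}$ is cyclic of order $18$ (with $2$ a primitive root), so the sixth-power map has image the order-$3$ subgroup $\{1,7,11\}$ of cube roots of unity modulo $19$. Running through the powers of $2$ shows that $p^{6}\equiv 1$ exactly for $p\equiv\pm 1,\pm 7,\pm 8$, that $p^{6}\equiv 7$ for $p\equiv\pm 2,\pm 3,\pm 5$, and that $p^{6}\equiv 11$ for $p\equiv\pm 4,\pm 6,\pm 9$. To match signs I would compute $L/(3M)\equiv 7\cdot 3^{-1}\equiv 15$ and $9M/L\equiv 9\cdot 7^{-1}\equiv 4\equiv -15\pmod{19}$; hence $\tfrac{-1\pm L/(3M)}{2}$ equals $7$ or $11$, while $\tfrac{-1\pm 9M/L}{2}$ equals $11$ or $7$, the order of the two values being swapped precisely because $9M/L\equiv -L/(3M)$. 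Reading off the signs from Theorem~\ref{MqLS} then assigns the values $5/8$ and $-13/28$ to $p\equiv\pm 2,\pm 3,\pm 5$ and the values $-9/8$ and $-1/28$ to $p\equiv\pm 4,\pm 6,\pm 9$, as claimed. The only point requiring care is this sign-matching between the two families, since the first is phrased through $L/(3M)$ and the second through $9M/L$; everything else is routine arithmetic modulo $19$.
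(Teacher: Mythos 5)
Your proposal is correct and is essentially the paper's own proof: the paper obtains this corollary by setting $q=19$ (so $L=7$, $M=\pm 1$, $16q=304$) in Theorem~\ref{MqLS}, exactly as you do. Your explicit verification of the residue-class translation via $p^6 \pmod{19}$ and the sign-matching through $9M/L \equiv -L/(3M) \pmod{19}$ (a consequence of $L^2+27M^2=4q$) correctly fills in the routine details the paper leaves to the reader.
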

\begin{corollary}
Let $p$ be a prime, $p\ne 2, 3, 31$. Then
\begin{align*}
\left(\frac{31}{p}\right)\sum_{k=0}^{p-1}\frac{S_k}{124^k}&\equiv
\begin{cases}
\ds
1/2 \pmod{p},
&  \text{if $p\equiv \pm 1, \pm 2, \pm 4, \pm 8, \pm 15\pmod{31}$;} \\
\ds -1/2 \pmod{p}, &   \text{if $p\equiv \pm 3, \pm 6, \pm 7, \pm 12, \pm 14 \pmod{31}$;} \\
\ds 0 \pmod{p}, &   \text{if $p\equiv \pm 5, \pm 9, \pm 10, \pm 11, \pm 13 \pmod{31}$,}
\end{cases}
\\[7pt]
\left(\frac{93}{p}\right)\sum_{k=0}^{p-1}\frac{S_k}{837^k}&\equiv
\begin{cases}
\ds
1/2 \pmod{p},
&  \text{if $p\equiv \pm 1, \pm 2, \pm 4, \pm 8, \pm 15\pmod{31}$;} \\
\ds 23/16 \pmod{p}, &   \text{if $p\equiv \pm 3, \pm 6, \pm 7, \pm 12, \pm 14 \pmod{31}$;} \\
\ds -31/16 \pmod{p}, &  \text{if $p\equiv \pm 5, \pm 9, \pm 10, \pm 11, \pm 13 \pmod{31}$,}
\end{cases}
\end{align*}
\begin{align*}
\left(\frac{31}{p}\right)\sum_{k=0}^{p-1}\frac{(2k+1)S_k}{124^k}&\equiv
\begin{cases}
\ds
-1 \pmod{p},
&  \text{if $p\equiv \pm 5,  \pm 9,  \pm 10, \pm 11, \pm 13
\pmod{31}$;} \\
\ds 1/2 \pmod{p}, &   \text{otherwise,}
\end{cases}
\\[7pt]
\left(\frac{93}{p}\right)\sum_{k=0}^{p-1}\frac{(2k+1)S_k}{837^k}&
\equiv
\begin{cases}
\ds
1/2 \pmod{p},
&  \text{if $p\equiv \pm 1, \pm 2, \pm 4, \pm 8, \pm 15\pmod{31}$;} \\
\ds -13/36 \pmod{p}, &   \text{if $p\equiv \pm 3, \pm 6, \pm 7, \pm 12, \pm 14 \pmod{31}$;} \\
\ds -5/36 \pmod{p}, &   \text{if $p\equiv \pm 5, \pm 9, \pm 10, \pm 11, \pm 13 \pmod{31}$.}
\end{cases}
\end{align*}
\end{corollary}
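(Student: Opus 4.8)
The plan is to specialize Theorem \ref{MqLS} to the prime $q=31$. First I would solve $4q = L^2+27M^2$ with $L\equiv 1\pmod 3$: since $124 = 4^2+27\cdot 2^2$ and $4\equiv 1\pmod 3$, the admissible choice is $L=4$, $|M|=2$. With these values the denominators of Theorem \ref{MqLS} collapse to those in the statement, because $16q/M^2 = 496/4 = 124$ and $432q/L^2 = 13392/16 = 837$, so that $M^{2k}/(16q)^k = 124^{-k}$ and $L^{2k}/(432q)^k = 837^{-k}$; likewise the prefactors $\left(\frac{q}{p}\right)$ and $\left(\frac{3q}{p}\right)$ become $\left(\frac{31}{p}\right)$ and $\left(\frac{93}{p}\right)$.

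Next I would translate the conditions $p^{(q-1)/3}\equiv\cdots\pmod q$ into explicit residue classes of $p$ modulo $31$, where $(q-1)/3 = 10$. Taking $3$ as a primitive root modulo $31$, the cubic residues form the index-$3$ subgroup $\{\pm1,\pm2,\pm4,\pm8,\pm15\}$, characterized by $p^{10}\equiv 1\pmod{31}$, which is exactly the first class in the statement. The two nontrivial cube roots of unity modulo $31$ are $5$ and $25$ (note $5^3\equiv 1$ and $5\cdot 25\equiv 1$). Computing $L/(3M)\equiv 11$ and $9M/L\equiv 20\equiv -11\pmod{31}$, I find $\frac{-1+L/(3M)}{2}\equiv 5$, $\frac{-1-L/(3M)}{2}\equiv 25$, and correspondingly $\frac{-1+9M/L}{2}\equiv 25$, $\frac{-1-9M/L}{2}\equiv 5$. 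Listing $p = 3^{j}$ with $j\equiv 1\pmod 3$ gives the class $\{\pm3,\pm6,\pm7,\pm12,\pm14\}$, on which $p^{10}\equiv 25$, while $j\equiv 2\pmod 3$ gives $\{\pm5,\pm9,\pm10,\pm11,\pm13\}$, on which $p^{10}\equiv 5$.

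Finally I would substitute $L=4$, $M=2$ into the right-hand sides of the four congruences of Theorem \ref{MqLS} and simplify. For instance, for the first sum $\frac{\pm L-2M}{8M} = \frac{\pm 4-4}{16}$ yields $0$ on the $+$ branch and $-\tfrac12$ on the $-$ branch; for the $837$-denominator sum $\frac{\pm 27M-2L}{8L} = \frac{\pm 54-8}{32}$ yields $\tfrac{23}{16}$ and $-\tfrac{31}{16}$; and analogous substitutions produce $\tfrac12,-1$ and $-\tfrac{13}{36},-\tfrac5{36}$ for the two $(2k+1)S_k$ sums. The stated evaluations then follow by matching each branch to the residue classes via criterion (\ref{cr}).

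The main obstacle will be the sign bookkeeping: the first and fourth congruences of Theorem \ref{MqLS} phrase their hypothesis through $L/(3M)$, whereas the second and third use $9M/L$, and since $9M/L\equiv -L/(3M)\pmod{31}$ the labels $+$ and $-$ attach to \emph{opposite} residue classes in the two conventions. One must therefore track, separately for each of the four sums, whether the class $\{\pm3,\dots\}$ (where $p^{10}\equiv 25$) or the class $\{\pm5,\dots\}$ (where $p^{10}\equiv 5$) is selected by a given branch, and only then read off the correct constant. Once this correspondence is pinned down, every displayed value follows by the direct substitution above.
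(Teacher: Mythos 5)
Your proposal is correct and follows exactly the paper's own route: the corollary is obtained by setting $q=31$ in Theorem \ref{MqLS}, with $4\cdot 31=4^2+27\cdot 2^2$ giving $L=4$, $|M|=2$, so that the sums become $\sum S_k/124^k$, $\sum S_k/837^k$, etc. Your identification of the residue classes (cubes $\{\pm1,\pm2,\pm4,\pm8,\pm15\}$, and $p^{10}\equiv 25$ resp.\ $5 \pmod{31}$ on the other two cosets), the computation $L/(3M)\equiv 11$, $9M/L\equiv -11 \pmod{31}$, and the branch-by-branch substitutions all check out, including the sign bookkeeping between the $L/(3M)$ and $9M/L$ conventions.
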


\section{Closed form for a companion sequence of $S_n$} \label{Section7}

As we noticed in the Introduction, the sequence $S_n$ can be defined explicitly by formula (\ref{Sn}) or by the generating function (\ref{gfSn}). Sun \cite{ZWS13} considered a companion sequence  $T_n$, whose definition comes from a conjectural series expansion of trigonometric functions \cite[Conj.\ 4]{ZWS13}: there are positive integers $T_1$, $T_2$, $T_3, \ldots$ such that
\begin{equation} \label{conj}
\sum_{k=0}^{\infty}S_kx^{2k+1}+\frac{1}{24}-\sum_{k=1}^{\infty}T_kx^{2k}=\frac{1}{12}\,\cos\left(\frac{2}{3}\arccos(6\sqrt{3}x)\right)
\end{equation}
for all real $x$ with $|x|\le 1/(6\sqrt{3})$.
The first few values of $T_n$ are as follows:
$$
1, \,\,\, 32, \,\,\, 1792, \,\,\, 122880, \,\,\, 9371648, \,\,\, 763363328,
\ldots.
$$
In this section, we give an exact formula for $T_n$. It easily follows from the companion series expansion  to (\ref{eq04}) \cite[p.\ 210,(12)]{Lu}:
\begin{equation} \label{cosa}
\cos(a\arcsin(z))=F\left(-\frac{a}{2},\, \frac{a}{2};\frac{1}{2};z^2 \right), \qquad |z|\le 1.
\end{equation}
\begin{proposition}
The coefficients $T_k$, $k\ge 1$, in expansion {\rm(\ref{conj})} are given by
$$
T_k=\frac{16^{k-1}}{k}{3k-2\choose 2k-1}=16^{k-1}\left(
2{3k-2\choose k-1}-{3k-2\choose k}\right).
$$
\end{proposition}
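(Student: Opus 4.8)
The plan is to obtain the closed form directly from the companion expansion (\ref{cosa}), by turning the $\arccos$ in (\ref{conj}) into an $\arcsin$ and splitting the resulting cosine of a difference of angles. Using $\arccos(y)=\frac{\pi}{2}-\arcsin(y)$, I first rewrite
\[
\frac{2}{3}\arccos(6\sqrt{3}\,x)=\frac{\pi}{3}-\frac{2}{3}\arcsin(6\sqrt{3}\,x),
\]
and then apply the cosine subtraction formula to get
\[
\cos\!\left(\frac{2}{3}\arccos(6\sqrt{3}\,x)\right)=\frac{1}{2}\cos\!\left(\frac{2}{3}\arcsin(6\sqrt{3}\,x)\right)+\frac{\sqrt{3}}{2}\sin\!\left(\frac{2}{3}\arcsin(6\sqrt{3}\,x)\right).
\]
The key observation is that the sine term is precisely the generating function (\ref{gfSn}): putting $z=6\sqrt{3}\,x$, so that $z^2/108=x^2$ and $\tfrac{4z}{3}=8\sqrt{3}\,x$, gives $\frac{\sqrt{3}}{2}\sin(\tfrac{2}{3}\arcsin(6\sqrt{3}\,x))=12x\sum_{k\ge 0}S_kx^{2k}$. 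After the overall factor $\frac{1}{12}$ this reproduces $\sum_{k\ge 0}S_kx^{2k+1}$, which cancels the $S_k$-sum on the left of (\ref{conj}).

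Next I treat the cosine term by (\ref{cosa}) with $a=2/3$ and $z=6\sqrt{3}\,x$, which yields
\[
\frac{1}{12}\cdot\frac{1}{2}\cos\!\left(\frac{2}{3}\arcsin(6\sqrt{3}\,x)\right)=\frac{1}{24}\,F\!\left(-\tfrac13,\tfrac13;\tfrac12;108x^2\right)=\frac{1}{24}\sum_{k\ge 0}\frac{(-1/3)_k(1/3)_k}{(1/2)_k\,k!}\,108^k x^{2k}.
\]
The $k=0$ terms match as $\tfrac{1}{24}$, and comparing coefficients of $x^{2k}$ in (\ref{conj}) forces, for $k\ge 1$,
\[
T_k=-\frac{108^k}{24}\cdot\frac{(-1/3)_k(1/3)_k}{(1/2)_k\,k!}.
\]

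It remains to reduce this Pochhammer expression to the stated binomial form, the only genuinely computational step. I would use $(-1/3)_k(1/3)_k=3^{-2k}\prod_{j=0}^{k-1}(9j^2-1)$ and $(1/2)_k=(2k)!/(4^kk!)$, and isolate the factor $j=0$, which contributes $-1$ and cancels the leading minus sign. The identity
\[
(3k-2)!=3^{k-1}(k-1)!\prod_{j=1}^{k-1}(9j^2-1),
\]
obtained by grouping the factors of $(3k-2)!$ according to their residue modulo $3$ (the classes $3j$, $3j+1=(3j+1)$ and $3j+2=(3(j+1)-1)$), turns the remaining product into factorials. Collecting the constant $\frac{108^k\,4^k}{24\cdot 9^k\cdot 3^{k-1}}=2\cdot 16^{k-1}$ and using $(2k)!=2k\,(2k-1)!$ then gives $T_k=16^{k-1}(3k-2)!/\big((2k-1)!\,k!\big)=\frac{16^{k-1}}{k}\binom{3k-2}{2k-1}$. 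The equivalent form follows from $\binom{3k-2}{2k-1}=\binom{3k-2}{k-1}$ and $\binom{3k-2}{k}=\frac{2k-1}{k}\binom{3k-2}{k-1}$, which yield $\frac{1}{k}\binom{3k-2}{k-1}=2\binom{3k-2}{k-1}-\binom{3k-2}{k}$. No analytic difficulty arises, since (\ref{cosa}) supplies the cosine expansion outright; the main obstacle is purely bookkeeping, namely keeping the power $108^k$, the sign from the $j=0$ factor, and the factorial rearrangement correctly aligned.
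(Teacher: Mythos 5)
Your proposal is correct and follows essentially the same route as the paper: the paper also converts $\arccos$ to $\arcsin$ via $\arcsin(z)+\arccos(z)=\pi/2$, expands the resulting cosine of a difference (stated there as a transformation formula for general $a$, then specialized to $a=2/3$), identifies the sine part with the $S_k$ generating function and the cosine part with $F(-\tfrac13,\tfrac13;\tfrac12;108x^2)$ via (\ref{cosa}), and reads off $T_k=-\tfrac{108^k}{24}\,\tfrac{(-1/3)_k(1/3)_k}{(1/2)_k k!}$ by comparing coefficients. Your only departure is working at $a=2/3$ from the outset and spelling out the Pochhammer-to-binomial reduction, which the paper leaves implicit; both your identity $(3k-2)!=3^{k-1}(k-1)!\prod_{j=1}^{k-1}(9j^2-1)$ and the final binomial manipulations check out.
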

\begin{proof}
Combining formulas (\ref{eq04}) and (\ref{cosa}) with  the obvious trigonometric identity
$$
\arcsin(z)+\arccos(z)=\frac{\pi}{2},
$$
we get a transformation formula connecting both  hypergeometric functions from (\ref{eq04}) and~(\ref{cosa}):
$$
\cos\left(\frac{\pi a}{2}\right)\!F\!\left(-\frac{a}{2}, \frac{a}{2}; \frac{1}{2}; z^2\right)
+\sin\left(\frac{\pi a}{2}\right)azF\!\left(\frac{1+a}{2},\frac{1-a}{2}; \frac{3}{2}; z^2\right)=\cos(a\arccos(z)), \,\,\, |z|\le 1.
$$
Plugging in $a=2/3$, we get
$$
\frac{1}{2}F\left(-\frac{1}{3}, \frac{1}{3}; \frac{1}{2}; z^2\right)
+\frac{z}{\sqrt{3}}F\left(\frac{1}{6}, \frac{5}{6}; \frac{3}{2}; z^2\right)=\cos\left(\frac{2}{3}\arccos(z)\right), \quad |z|\le 1.
$$
Replacing $z$ by $6\sqrt{3}x$ with $|x|\le 1/(6\sqrt{3})$ and
taking into account that
$$
F\left(\frac{1}{6}, \frac{5}{6}; \frac{3}{2}; 108x^2\right)=
2\sum_{k=0}^{\infty}S_kx^{2k},
$$
we obtain
$$
\frac{1}{24}F\left(-\frac{1}{3}, \frac{1}{3}; \frac{1}{2}; 108x^2\right)+\sum\limits_{k=0}^{\infty}S_kx^{2k+1}=
\frac{1}{12}\,\cos\left(\frac{2}{3}\arccos(6\sqrt{3}x)\right),
$$
which gives the following generating function for the companion sequence $T_n:$
$$
\frac{1}{24}-\sum_{k=1}^{\infty}T_kx^{2k}=\frac{1}{24}
F\left(-\frac{1}{3}, \frac{1}{3}; \frac{1}{2}; 108x^2\right).
$$
Comparing coefficients of powers of $x^2$, we get a formula for $T_k$,
$$
T_k=-\frac{1}{24}\frac{(-1/3)_k(1/3)_k}{(1/2)_k k!}\,108^k=
\frac{16^{k-1}}{k}{3k-2\choose 2k-1}=
16^{k-1}\left(2{3k-2\choose k-1}-{3k-2\choose k}\right),
$$
which shows that $T_k\in {\mathbb N}$ for all positive integers $k$.
\end{proof}

\section{Acknowledgement}

The authors would like to thank the editor-in-chief  and the referees for their time and useful comments.
 The authors gratefully acknowledge support from the  Research Immersion Fellowships of the Fields Institute.




\bigskip
\hrule
\bigskip

\noindent (Concerned with sequences  \seqnum{A000108}, \seqnum{A001448}, \seqnum{A001764}, \seqnum{A005809}, \seqnum{A048990}, \seqnum{A176898}
)

\bigskip
\hrule
\bigskip


\begin{thebibliography}{99}


\bibitem{Cox}
D.\ A.\ Cox, {\it Primes of the Form $x^2+ny^2$: Fermat, Class Field Theory, and Complex Multiplication,} John Wiley \& Sons, Inc.,  1989.

\bibitem{G14}
V.\ J.\ W.\ Guo, Proof of two divisibility properties of binomial coefficients conjectured by Z.\ W.\ Sun, {\it Electron.\ J.\ Combin.} {\bf 21} (2014), \#P2.54.

\bibitem{HPT}
Kh.\ Hessami Pilehrood, T.\ Hessami Pilehrood, and R.\ Tauraso, Congruences concerning Jacobi polynomials and Ap\'{e}ry-like formulae, {\it Int.\ J.\ Number Theory} {\bf 8} (2012), 1789--1811.

\bibitem{IR}
K.\ F.\ Ireland and M.\ I.\ Rosen, {\it A Classical Introduction to Modern Number Theory,} Springer, 1982.

\bibitem{L59}
E.\ Lehmer,  On Euler's criterion, {\it J.\ Austral.\ Math.\ Soc.}
{\bf 1} (1959/1961),  64--70.


\bibitem{Lu}
Yu.\ L.\ Luke, {\it The Special Functions and Their Approximations}, V.\ I, Academic Press, 1969.

\bibitem{OEIS}
N.\ J.\ A.\ Sloane, {\it The On-Line Encyclopedia of Integer Sequences}, 2015. Published electronically at
\url{http://oeis.org}.

\bibitem{ZHS98}
Z.\ H.\ Sun, On the theory of cubic residues and nonresidues, {\it Acta Arith.} {\bf 84} (1998), 291--335.

\bibitem{ZHS13}
Z.\ H.\ Sun, Cubic congruences and sums involving ${3k\choose k}$,
preprint, 2013,

\url{http://arxiv.org/abs/1310.6721v7}.

\bibitem{ZHS13_11}
Z.\ H.\ Sun, Quartic residues and sums involving ${4k\choose 2k}$,
preprint, 2013,

\url{http://arXiv.org/abs/1311.6364v2}.

\bibitem{ZHS14}
Z.\ H.\ Sun, Congruences concerning Lucas sequences, {\it Int.\ J.\ Number Theory}  {\bf 10} (2014),  793--815.

\bibitem{ZWS09}
Z.\ W.\ Sun, Various congruences involving binomial coefficients and higher-order Catalan numbers, preprint,  2009, \url{http://arxiv.org/abs/0909.3808v2}.

\bibitem{ZWS13}
Z.\ W.\ Sun, Products and sums divisible by central binomial coefficients, {\it Electron.\ J.\ Combin.} {\bf 20} (2013), \#P9.

\bibitem{Sz:59} G.\ Szeg\"{o},
{\it Orthogonal Polynomials,} AMS, Colloqium Publications, V.\ 23,  1959.

\bibitem{W75}
K.\ S.\ Williams, On Euler's criterion for cubic non-residues, {\it Proc.\ Amer.\ Math.\ Soc.}
{\bf 49} (1975), 277--283.


\bibitem{ZPS10}
L.\ L.\ Zhao, H.\ Pan, and Z.\ W.\ Sun, Some congruences for the second-order Catalan numbers, {\it Proc.\ Amer.\ Math.\ Soc.} {\bf 138} (2010), 37--46.

\end{thebibliography}
\end{document}